\newtheorem{theorem}{Theorem}[section]
\newtheorem{lemma}[theorem]{Lemma}
\newtheorem{corollary}[theorem]{Corollary}
\theoremstyle{plain}
\theoremstyle{definition}
\newtheorem{definition}[theorem]{Definition}
\theoremstyle{remark}
\newtheorem{remark}[theorem]{Remark}
\numberwithin{equation}{section}
\newcommand{\bR}{{\mathbb R}}
\newcommand{\cA}{{\mathcal A}}
\title[Generalized Schauder Theory]{Generalized Schauder Theory and its Application to Degenerate/Singular Parabolic Equations} 
\author{Takwon Kim}
\address{Research Institute of Mathematics, Seoul National University, Seoul 08826, Republic of Korea}
\email{xkrkr@snu.ac.kr}
\author{Ki-Ahm Lee}
\address{Research Institute of Mathematics, Seoul National University, Seoul 08826, Republic of Korea\\
\indent Department of Mathematical Sciences, Seoul National University, Seoul 08826, Republic of Korea}
\email{kiahm@snu.ac.kr}
\author{Hyungsung Yun}
\address{Department of Mathematical Sciences, Seoul National University, Seoul 08826, Republic of Korea}
\email{euler@snu.ac.kr}
\subjclass[2020]{35B65; 35K65; 35K67}
\keywords{Generalized Schauder estimates, Fractional order expansion, Higher regularity, Degenerate/Singular parabolic equations, Monge--Ampère equations}
\begin{document} 

\begin{abstract}
In this paper, we study generalized Schauder theory for the degenerate/singular parabolic equations of the form
$$u_t = a^{i'j'}u_{i'j'} + 2 x_n^{\gamma/2} a^{i'n} u_{i'n} + x_n^{\gamma} a^{nn} u_{nn} + b^{i'} u_{i'} + x_n^{\gamma/2} b^n u_{n} + c u + f \quad (\gamma \leq1).$$
When the equation above is singular, it can be derived from Monge--Ampère equations by using the partial Legendre transform. Also, we study the fractional version of Taylor expansion for the solution $u$, which is called $s$-polynomial. To prove $C_s^{2+\alpha}$-regularity and higher regularity of the solution $u$, we establish generalized Schauder theory which approximates coefficients of the operator with $s$-polynomials rather than constants. The generalized Schauder theory not only recovers the proof for uniformly parabolic equations but is also applicable to other operators that are difficult to apply the bootstrap method to obtain higher regularity.\\

\end{abstract}

\maketitle

%
%
\section{Introduction}
In this paper, we study generalized Schauder theory and fractional order expansion of solutions $u$ for the following degenerate/singular parabolic equations
 \begin{equation} \label{eq:main}
u_t = L u + f  \quad \textnormal{in } Q_1^+ ,
 \end{equation}
where
 \begin{equation*}
	L = a^{i'j'}(X) D_{i'j'} + 2 x_n^{\gamma/2} a^{i'n}(X) D_{i'n} + x_n^{\gamma} a^{nn}(X) D_{nn} + b^{i'}(X) D_{i'} + x_n^{\gamma/2} b^n(X) D_{n} + c(X) 
\end{equation*}
with a constant $\gamma \leq 1$ and 
 $$Q_1^+= \{ (x,t) \in \mathbb{R}_+^n  : |x_i| < 1\,(1 \leq i \leq n)  \} \times (-1,0] .$$
The repeated index with prime $i'$ means the summation from $1$ to $(n-1)$, that is
\begin{equation*}
	A^{i'} B_{i'}  =\sum_{i=1}^{n-1} A^i B_i \quad \text{and} \quad A^{i'j'} B_{i'j'} =\sum_{i,j=1}^{n-1} A^{ij} B_{ij}.
\end{equation*}
In classical regularity theory, to obtain $C^{k,\alpha}$-regularity of the solution $u$, it is shown that the following statement holds (cf. \cite{Wan92}): For each $Y \in \overline{\Omega}$, there exists a polynomial $p^Y$ of degree $k$ satisfying
\begin{equation} \label{def_cka}
	\|u - p^Y\|_{L^{\infty}(B_r(Y) \cap \Omega)} \leq C r^{k+\alpha} \quad \text{for all } r>0.
\end{equation}
Furthermore, we can obtain higher order partial derivative at $Y$ of $u$ can be obtained from the coefficients of the polynomial $p^Y$. However, the function that is not smooth enough cannot be approximated by polynomials. Breaking away from the stereotype that polynomial approximation should be used, we consider generalized polynomials represented by monomials of fractional order, and we call them $s$-polynomials in this study. In simple terms, $s$-polynomial approximation can be understood as a fractional version of Taylor expansion. The coefficients of an $s$-polynomial $p$ approximating the solution can be regarded as suitable constant multiples of the weighted derivative, and the degree of $p$ gives the order of regularity. While conventional regularity theory so far has focused on differentiability for classical solutions, in this study, we are interested in how well $s$-polynomial approximates the solution instead of differentiability. In other words, a generalized concept of regularity theory can be developed through the $s$-polynomial.

The regularity of solutions for \eqref{eq:main} can be expected to be $C^{1,\alpha}$-regularity up to the boundary, but higher regularity up to the boundary can be obtained by considering $s$-polynomials and a new metric that preserves scaling. In classical Schauder theory, by showing that the derivative satisfies the equations of the same class, $C^{2,\alpha}$-regularity of the gradient $Du$ is obtained, and iteratively, higher regularity can be obtained. However, this bootstrap method is not applicable in \eqref{eq:main} since the equation that the partial derivative $u_n$ satisfies is not of the same class as \eqref{eq:main}, so the generalized Schauder theory was developed to solve this issue. It is also applicable to other operators that are difficult to apply the bootstrap method to obtain higher regularity.

The study on the regularity of solutions for \eqref{eq:main} was inspired by several previous studies. 
\begin{enumerate}
\item Monge--Ampère equations: Daskalopoulos and Savin \cite{DS09} converted the Monge--Ampère equations
$$\det D^2 u = (x^2+y^2)^{\gamma/2} \quad \text{in } B_1$$
into the singular equations with $\gamma<0 $
$$ |x|^{\gamma} v_{xx} + v_{yy} = 0 \quad \text{in } B_1$$
using the partial Legendre transform. In \cite{DS09}, they solved the problem for Monge--Ampère equation by finding the fractional order expansion 
\begin{align*}
	v(x,y) &= a_1 + a_2 x + a_3 y + a_4 xy \\
	&\qquad+ a_5 \left( \frac{1}{2} y^2 - \frac{1}{(2-\gamma)(1-\gamma)} |x|^{2-\gamma} \right) + O\big( (y^2+|x|^{2-\gamma})^{1+\delta} \big)
\end{align*}
for some universal constant $\delta=\delta(\gamma) > 0$. 

In the study of Schauder estimates up to boundary for degenerate Monge--Ampère equations, Le and Savin \cite{LS17} show that a bounded solution $w$ of the singular equations with $\gamma<0 $
\begin{equation*} 
	\left\{\begin{aligned}
	 	\Delta_{x'} w + x_n^{\gamma} w_{nn} &=0 && \text{in } B_1^+\\
		w &= 0 &&  \text{on } \{x_n=0\}
	\end{aligned} \right.
\end{equation*}
satisfies
\begin{equation*}
	|w(x) - p(x') x_n | \leq C(x_1^2 + \cdots x_{n-1}^2 + x_n^{2-\gamma})^{\frac{3-\gamma}{2-\gamma}}
\end{equation*}
for all $x \in B_{1/2}^+$, where $C$ is a universal constant and $p(x')$ is a standard polynomial of degree $1$. 

As they used a distance function that allows solutions and equations to be scaling invariant, we also defined a distance function corresponding to \eqref{eq:main} in our study. It is noteworthy that such a fractional order expansion and estimates are developed to higher orders by $s$-polynomial and the distance function.
\item Gauss Curvature Flow: Related studies can also be found in differential geometry, for example, Daskalopoulos and Hamilton \cite{DH99} showed that the regularity of the interface for the Gauss curvature flow with flat sides can be transformed into the regularity of solutions for the following degenerate equations
\begin{equation} \label{eq_gcf}
	v_t = x v_{xx} + v_{yy}  + \nu v_x + f \quad \text{in } \mathbb{R}_+^2 \times (0,T]. 
\end{equation}
They showed Schauder estimates for smooth solutions of \eqref{eq_gcf} using a new metric that preserves scaling. 
\item Non-local Equations: In the research field of non-local equations, Caffarelli and Silvestre \cite{CS07} showed that the solution $u$ of the degenerate/singular equations 
\begin{equation} \label{eq:ext_prob}
	\left\{\begin{aligned}
	 	\Delta_x u + z^{\frac{2s-1}{s}} u_{zz} &=0 && \text{in } \mathbb{R}^n \times [0,\infty)\\
		u &= f &&  \text{on } \mathbb{R}^n \times \{0\}
	\end{aligned} \right.
\end{equation}
satisfies
\begin{equation*}
	(-\Delta)^s f(x) = - C(n,s) u_z (x,0)
\end{equation*}
for some constant $C(n,s) > 0$. The regularity of solutions for the extension problem \eqref{eq:ext_prob} can be applied to the regularity of solutions for the fractional Laplace equations. In fact, in \cite{CS07}, H\"older's regularity of solutions for fractional Laplace equations was shown by using the Harnack inequality for the extension problem \eqref{eq:ext_prob}.
\item Mathematical Finance: The Black--Scholes equations for the constant elasticity of variance (CEV) model which was introduced by Cox \cite{Cox75} and Cox and Ross \cite{CR76}. The risky asset's price $X_t$ of the CEV model evolves according to the following stochastic differential equations
\begin{equation*}
	\left\{\begin{aligned}
		dX_t&=\mu X_t \,dt+\sigma X_t^{\gamma/2} \,dW_t \\
		X_0&=x,
	\end{aligned}\right.
\end{equation*}
where $W_t$ is a one-dimensional Brownian motion for some positive constants $\mu$, $\sigma$, and $\gamma$. Using the Feynman-Kac formula, we can derive the following degenerate equations
\begin{equation} \label{eq:cev}
	u_t+\frac{1}{2}\sigma^2 x^{\gamma}u_{xx} +rxu_x -ru  =0  \quad  \text{in } \mathbb{R}_{>0} \times [0,T) .
\end{equation}
Kim, Lee, and Yun \cite{KLY23} generalized \eqref{eq:cev} to $n$-dimensions and showed higher regularity of \eqref{eq:cev} in the case of $1<\gamma<2$. However, as with the aforementioned equation, many applications occur for $\gamma\leq1$, so we need to study \eqref{eq:main} for $\gamma\leq1$.
\end{enumerate}
Although there is a slight difference compared to \eqref{eq:main}, the methodology covered in our study is expected to be applicable to equations that have similar degenerate/singular structures of \eqref{eq:main} like aforementioned equations. In addition, it is a new version of Schauder theory in the sense that can directly show higher regularity of solutions without using the bootstrap method for uniformly parabolic equations. 

In order to obtain the heuristic idea of $s$-polynomial, let us start with a simplified version of \eqref{eq:main}. The following equation
\begin{equation} \label{eq:simple}
	u_t = x^{\gamma} u_{xx} + 1 \quad \text{in } Q_1^+ \quad (0<\gamma<1)
\end{equation}
admits a solution of the form
\begin{align} \label{sol_simple}
	u(x,t) &= \frac{t^2}{2} x - \frac{1}{(2-\gamma)(1-\gamma)} x^{2-\gamma} + \frac{t}{(3-\gamma)(2-\gamma)} x^{3-\gamma}  \\
	&\qquad\qquad\qquad\qquad\qquad + \frac{1}{(5-2\gamma)(4-2\gamma)(3-\gamma)(2-\gamma)}x^{5-2\gamma} .\nonumber
\end{align}
Since $u$ is not twice differentiable at $x=0$ in the $x$-direction, the regularity of $u$ for a spatial variable is at most $C^{1,\alpha}$, so the polynomial that satisfies \eqref{def_cka} with $k=1$ is $p(x,t) = \frac{t^2}{2} x$. However, considering the following estimate
\begin{equation} \label{inq:frac_exp} 
	\left| u(x,t) - \frac{t^2}{2} x - \frac{1}{(2-\gamma)(1-\gamma)} x^{2-\gamma} - \frac{t}{(3-\gamma)(2-\gamma)} x^{3-\gamma} \right| \leq C r^{5-2\gamma}
\end{equation}
for all $X\in Q_r^+$, it can be expected that fractional order expansion is possible. Also, \eqref{inq:frac_exp} contains more terms than $p$ and provides a more accurate approximation.

Looking at the pattern of each monomial when it is substituted into \eqref{eq:simple}, it is possible to find a solution with a fractional order expansion of more terms than \eqref{sol_simple}. This analysis can be extended for \eqref{eq:main}, which is the motivation to consider $s$-polynomials. Although the forcing term $f$ is a general function rather than a constant function, the solution of \eqref{eq:main} can be approximated by such a fractional order expansion. This is why we established Schauder estimates using $s$-polynomials rather than standard polynomials. In addition, the distance function for uniformly parabolic equations does not preserve the scaling at the boundary $\{x_n=0\}$, so it is not suitable for describing the behavior of solutions for \eqref{eq:main} in the neighborhood of $\{x_n = 0\}$. Inspired by \cite{DH99,DS09,LS17}, we define a scaling-preserving distance function $s : \overline{Q_1^+} \times \overline{Q_1^+} \to [0,\infty) $ and use it to show Schauder estimates.

Since the bootstrap method is not applicable in \eqref{eq:main}, it is necessary to obtain $C_s^{k,2+\alpha}$-regularity of solutions for \eqref{eq:main} directly from $C_s^{k,\alpha}$-regularity of the coefficients of $L$. However, it is not enough to consider perturbative methods, by “freezing” the coefficients around a certain point. In order to solve this difficulty, we will use $s$-polynomial approximation of coefficients that fully reflects the regularity of coefficients and study equations with $s$-polynomial coefficients first.

The solution $u$ of \eqref{eq:main} satisfies the following estimates depending on the range of $\gamma$.
\begin{numcases}{|u(X)|\leq}
	C x_n \approx C s[X,O]^{\frac{2}{2-\gamma}} & $(\gamma  <1)$ \label{lip} \\ 
	Cx_n^{2-\gamma} \approx C s[X,O]^2 & $(1 < \gamma < 2)$ \label{hol}  
\end{numcases}
for all $X \in \overline{Q^+_{1/2}}$, where $C>0$ is a universal constant. Assuming $f = 0$, the order of \eqref{hol} can be improved to a higher order, but not in the case of \eqref{lip}. In the case of $1<\gamma<2$, $C_s^{2+\alpha}$-regularity of solutions for \eqref{eq:main} is directly obtained from the improved estimates of \eqref{hol}, and then the generalized coefficient freezing method can be applied immediately thanks to $C_s^{2+\alpha}$-regularity. However, for $\gamma < 1$, the regularity of solutions for \eqref{eq:main} expected from \eqref{lip} is $C_s^{\alpha}$-regularity or $C_s^{1,\alpha}$-regularity depending on the range on $\gamma$. Thus, a process to show $C_s^{2+\alpha}$-regularity of solutions for \eqref{eq:main} is necessary. In addition, since $s$-polynomials for $\gamma=1$ include logarithmic functions, an appropriate method for dealing with them is also required.

The paper is organized as follows. In \Cref{sec:pre}, we introduce notations used throughout this paper and state the main theorem. Also, the maximal principle, comparison principle, the existence of a unique solution for the degenerate/singular equations are covered in \Cref{sec:pre}. It is similar to uniformly parabolic equations, so readers familiar with it can skip it. In the last part of \Cref{sec:pre}, we derive global regularity from interior regularity and boundary regularity of solutions for \eqref{eq:main}. In \Cref{sec:hol_reg}, we prove the boundary Lipschitz estimates of solutions for \eqref{eq:main} and use it to show global $C_s^{\alpha}$-regularity of solutions  for \eqref{eq:main}. In \Cref{sec:reg_cc}, we prove $C^{1,\alpha}$-regularity and $C_s^{k,2+\alpha}$-regularity of solutions for equations with constant coefficients. In \Cref{sec:gst}, we deal with generalized Schauder theory, which is the main idea of this study. Unlike equations with constant coefficients, $C_s^{k,2+\alpha}$-regularity cannot be obtained immediately, so $C_s^{2,\alpha}$-regularity for solutions of \eqref{eq:main} and $C_s^{k,2+\alpha}$-regularity of solutions for equations with $s$-polynomial coefficients are first shown, and then $C_s^{k,2+\alpha}$-regularity of solutions for \eqref{eq:main} can be obtained by using generalized coefficient freezing method.
%
%
\section{Preliminaries} \label{sec:pre}
\subsection{Notations}
We summarize some basic notations as follows.
\begin{enumerate}
\item Points: For $x=(x_1, \cdots,x_n) \in \mathbb{R}^n$, we denote $x'=(x_1, \cdots,x_{n-1}) \in \mathbb{R}^{n-1}$, $X=(x,t) \in \mathbb{R}^{n+1}$, and $O=(0,\cdots,0)\in \bR^{n + 1}$, respectively. For $r>0$, we denote $rX = (rx',r^{\frac{2}{2-\gamma}}x_n, r^2 t)$ which allows solutions and equations to be scaling invariant.
\item Sets: We denote the open upper half-space and the set of nonnegative integers as $\mathbb{R}_{+}^n = \{x \in \mathbb{R}^n : x_n > 0\} $ and $ \mathbb{N}_{0}=\mathbb{N} \cup \{0\}$, respectively. We denote an intrinsic cube with side $2r$ and center $Y=(y,\tau)$ as
$$ Q_r^+(Y) =  \{x \in \mathbb{R}_{+}^n: |x_i-y_i|< r \,(1 \leq i < n), \, | x_n^{\frac{2-\gamma}{2}}-y_n^{\frac{2-\gamma}{2}} |< r  \} \times (\tau -r^2 , \tau]$$
and standard cube with side $2r$ and center $Y=(y,\tau) \in  \mathbb{R}^n $ as
$$ Q_r(Y) =  \{x \in  \mathbb{R}^n :|x_i-y_i|< r \,(1 \leq i \leq n) \} \times (\tau -r^2 , \tau].$$ 
For convenience, we denote $Q_r^+ = Q_r^+(O)$. We denote the set of degrees for $s$-polynomial as
\begin{equation*}
	\mathcal{D} = \Big\{i+\frac{2j}{2-\gamma}: i,j \in \mathbb{N}_0 \Big\}.
\end{equation*}
\item Universal constant means a constant that depends only on $n$, $\lambda$, $\Lambda$, $\gamma$, $k$, and $\alpha$ with $n \in \mathbb{N} $, $k \in \mathbb{N}_0$, $0 < \alpha < 1$, $0<\lambda\leq \Lambda$, and $\gamma \leq 1$.
\item Distance functions: The parabolic distance function $d : \overline{Q_1^+} \times \overline{Q_1^+} \to [0,\infty) $ from $X=(x,t)$ to $Y=(y,\tau)$ is given by
\begin{equation*}
	d[X,Y] = \max \left\{ \max_{1 \leq i \leq n} |x_i-y_i|, \sqrt{|t-\tau|}  \right\} .
\end{equation*}
	In addition, the intrinsic distance function $s : \overline{Q_1^+} \times \overline{Q_1^+} \to [0,\infty) $ from $X=(x,t)$ to $Y=(y,\tau)$ is given by
\begin{equation*}
	s[X,Y] =  \max \left\{ \max_{1\leq i < n}|x_i-y_i|,  |x_n^{\frac{2-\gamma}{2}}-y_n^{\frac{2-\gamma}{2}}|, \sqrt{|t-\tau|}  \right\} .
\end{equation*}
\item Partial derivatives:
	We denote partial derivatives of $u$ as subscriptions.
\begin{equation*}
	u_t = \partial_t u = \frac{\partial u}{\partial t} , \quad u_i = D_iu =\frac{\partial u}{\partial x_i} , \quad \text{and} \quad  u_{ij} = D_{ij} u = \frac{\partial^2 u}{\partial x_i \partial x_j} .
\end{equation*}
\item Multiindex notation: A vector of the form $\beta =(\beta_1,\beta_2,\cdots,\beta_k) \in \mathbb{N}_{0}^k$ is called a $k$-dimensional multiindex of order $|\beta| = \beta_1 + \beta_2 + \cdots + \beta_k$. For $k$-dimensional multiindices $\beta, \tilde{\beta} \in \mathbb{N}_0^k$, $\tilde{\beta} \leq \beta$ means $\tilde{\beta}_i \leq \beta_i $ $(i=1,2,\cdots,k)$. The factorial and binomial coefficient of a mutiindex are defined as follows:
$$\beta! = \beta_1! \beta_2! \cdots \beta_k! \qquad \text{and} \qquad {\beta \choose \tilde{\beta}} = \frac{\beta!}{\tilde{\beta}! (\beta-\tilde{\beta})!} .$$
Given $\beta \in \mathbb{N}_0^k$, $m \in \mathbb{N}$, and $(x_1,x_2,\cdots, x_k) \in \mathbb{R}^k$, define
$$(x_1,x_2,\cdots, x_k)^{\beta} = x_1^{\beta_1} x_2^{\beta_2}  \cdots  x_k^{\beta_k}, \qquad  D_{(x_1,x_2,\cdots, x_k) }^{\beta} u= \frac{\partial^{|\beta|} u}{\partial x_1^{\beta_1}\partial x_2^{\beta_2} \cdots \partial x_k^{\beta_k}} , $$
and $ D_{(x_1,x_2,\cdots, x_k) }^m u = \{D_{(x_1,x_2,\cdots, x_k) }^{\beta} u : |\beta| = m\}.$
\begin{remark}[Leibniz's formula] \label{leibniz_form}
Let $\Omega$ be a domain in $\mathbb{R}^n$ and let $u,v :\Omega \to \mathbb{R}$ be smooth functions. Then
 $$D_x^{\beta}(uv) = \sum_{\tilde{\beta} \leq \beta} {\beta \choose \tilde{\beta}} D_x^{\tilde{\beta}} u D_x^{\beta-\tilde{\beta}} v.$$
 \end{remark}
\item $s$-polynomial: Let $\kappa$ be a positive real number. We say $p$ is an $s$-polynomial of degree $m$ corresponding to $\kappa$ at $Y=(y,\tau) \in  \mathbb{R}^n_+ \times \mathbb{R}$ provided 
\begin{equation}\label{def:s-pol}		
p(X) = \left \{ 
\begin{aligned}
	& \sum_{\beta, i, j, l} A^{\beta  ijl} (x'-y')^{\beta}  (x_n^{\frac{2-\gamma}{2}}-y_n^{\frac{2-\gamma}{2}} )^{i} (x_n-y_n)^j (t-\tau)^l && (\gamma < 1) \\
	& \sum_{\beta, i, j, l} A^{\beta  i j l} (x'-y')^{\beta} (\sqrt{x_n} - \sqrt{y_n} )^{i}  (\sqrt{x_n} \log x_n  -\sqrt{ y_n} \log y_n )^{j} (t-\tau)^l && (\gamma = 1) 
\end{aligned} \right.
\end{equation}
for some $A^{\beta ijl} \in \mathbb{R}$, where $\beta \in \mathbb{N}_0^{n-1}$ and $i,j,l \in \mathbb{N}_0$ satisfy 
$$  \left \{ 
\begin{aligned}
&| \beta | + i + \frac{2j}{2-\gamma} + 2 l < \kappa && (\gamma<1) \\
&| \beta | + i + j + 2 l < \kappa  && (\gamma=1).
\end{aligned} \right.$$
The degree $m$ corresponding to $\kappa$ is given by
$$\deg p =  
 \left \{ 
\begin{aligned}
&\max \Big\{ | \beta | + i + \frac{2j}{2-\gamma} + 2l \in [0,\kappa): \beta \in \mathbb{N}_{0}^{n-1}, \, i,j,l \in \mathbb{N}_{0} \Big\}  &&  (\gamma<1) \\
&\max \left\{ | \beta | + i + j + 2l \in [0,\kappa): \beta \in \mathbb{N}_{0}^{n-1}, \, i,j,l \in \mathbb{N}_{0} \right\} && (\gamma=1) .
\end{aligned} \right.$$
\item Hölder norms: Let $\Omega \subset \mathbb{R}^n_+ \times \mathbb{R} $ be an open set and $\alpha \in (0,1)$. We define the $\alpha^{\rm th}$-Hölder seminorm of $u:\Omega \to \mathbb{R}$ to be
\begin{equation*}
	{[u]}_{C^{\alpha}(\overline{\Omega})} \coloneqq \sup_{\substack{X \not = Y \\ X, Y \in \Omega}}\frac{|u(X)-u(Y)|}{d[X,Y]^{\alpha}} .
\end{equation*}
Also we define the $\alpha^{\rm th}$-H\"older norm
\begin{equation*}
	\|u\|_{C^\alpha(\overline{\Omega})}  \coloneqq \|u\|_{C^0(\overline{\Omega})} + [u]_{C^\alpha(\overline{\Omega})} .
\end{equation*}
Moreover, we define for a nonnegative integer $k$,
\begin{equation*}
	\|u\|_{C^{k,0}(\overline{\Omega})}  \coloneqq \sum_{\substack{|\beta|+2i\leq k \\ \beta \in \mathbb{N}_{0}^n, \, i \in \mathbb{N}_{0} }} \|D_x^{\beta} \partial_t^i u \|_{C^0(\overline{\Omega})} 
\end{equation*}
and
\begin{equation*}
	\|u\|_{C^{k,\alpha}(\overline{\Omega})}  \coloneqq \|u\|_{C^{k,0}(\overline{\Omega})} + \sum_{\substack{|\beta|+2i = k \\ \beta \in \mathbb{N}_{0}^n, \, i \in \mathbb{N}_{0} }} [D_x^{\beta} \partial_t^i u ]_{C^\alpha (\overline{\Omega})}. 
\end{equation*}
Next, we define the H\"older norm with respect to the distance function $s$
\begin{equation*}
	\|u\|_{C^{k,\alpha}_s(\overline{\Omega})}  \coloneqq \sup_{Y \in \overline{\Omega}} \sup_{r>0} \, \inf_{p} \bigg\{\frac{|u(X)-p(X)|}{r^{k+\alpha}} + \sum_{\beta,i,j,l}| A^{\beta ijl}| : X \in \overline{Q_{r}^+ (Y)} \cap \overline{\Omega} \bigg\},
\end{equation*}
where $p$ is among $s$-polynomials of degree $m$ corresponding to $\kappa = k + \alpha$. In particular, we denote the $\alpha^{\rm th}$-Hölder norm with respect to distance function $s$,
$$\|u\|_{C^{\alpha}_s(\overline{\Omega})} \coloneqq \|u\|_{C^{0,\alpha}_s(\overline{\Omega})} .$$
Finally, we define the $(2+\alpha)^{\rm th}$-Hölder norm and the higher $(2+\alpha)^{\rm th}$-Hölder norm of $u:\Omega \to \mathbb{R}$ to be
\begin{equation*}
	\|u\|_{C^{2+\alpha}_s(\overline{\Omega})}  \coloneqq \| u \|_{C^{2,\alpha}_s(\overline{\Omega})} + \|(\partial_t - L) u \|_{C^{\alpha}_s(\overline{\Omega})} 
\end{equation*}
and	
\begin{equation} \label{norm_rel}
	\|u\|_{C^{k,2+\alpha}_s(\overline{\Omega})} \coloneqq \| u \|_{C^{k+2,\alpha}_s(\overline{\Omega})} + \|(\partial_t - L) u \|_{C^{k,\alpha}_s(\overline{\Omega})}.
\end{equation}
\item Function spaces:
	For any nonnegative integer $k$, $0 < \alpha < 1$, and domain $\Omega \subset \mathbb{R}_+^{n} \times \mathbb{R}$, we define function space $C^{k,\alpha}_s(\overline{\Omega})$ as the completion of $\{u\in C^\infty(\Omega)\cap C(\overline{\Omega}):\|u\|_{C^{k,\alpha}_s(\overline{\Omega})}<\infty\} $ for $\|\cdot\|_{C^{k,\alpha}_s(\overline{\Omega})}$ and $ C^{k,2+\alpha}_s(\overline{\Omega}) = \{u\in C^{k+2,\alpha}_s(\overline{\Omega}):\|u\|_{C^{k,2+\alpha}_s(\overline{\Omega})}<\infty\}$ which is the space that allows the operator $\partial_t-L:C^{k,2+\alpha}_s(\overline{\Omega}) \to C^{k,\alpha}_s(\overline{\Omega})$ to be well-defined.

All of the function spaces defined so far become Banach spaces.
\end{enumerate}

Through this article, we assume that the symmetric matrix $(a^{ij})$ has the following property:
\begin{equation} \label{parabolicity}
	\lambda|\xi|^2\leq a^{ij}(X)\xi_i\xi_j\leq \Lambda |\xi|^2\quad\mbox{for any } X \in \overline{Q_1^+}, ~ \xi \in \mathbb R^n 
\end{equation}
and $b^i,c$ have the boundedness as follows:
\begin{equation*}
	\sum_{i=1}^n \|b^i\|_{L^\infty(Q_1^+)}+\|c\|_{L^\infty(Q_1^+)} \leq \Lambda.
\end{equation*}

In addition, since the function $v = e^{-(\Lambda+1) t} u$ satisfies 
$$ v_t = L v - (\Lambda+1) v + f  \quad \mbox{in } Q^+_1 ,$$
we may assume that the coefficient $c$ of $L$ is negative. 
\subsection{Main Result}
In this subsection, we are going to state the main theorem.
\begin{theorem}\label{thm:main}
Let $k \in \mathbb{N}_{0}$, $0<\alpha< 1$ with $ k+2+\alpha \notin \mathcal{D}$, and assume 
\begin{equation*}
	a^{ij}, \, b^i, \, c, \, f\in C^{k,\alpha}_s(\overline{Q_1^+}) \quad (i,j=1,2,\cdots,n).
\end{equation*}
Suppose $u \in C^2 (Q^+_1)\cap  C(\overline{Q^+_1})$ is a solution of \eqref{eq:main} satisfying $u=0$ on $\{ X \in \partial_p Q^+_1 :x_n=0\}$. Then $u\in C^{k,2+\alpha}_s(\overline{Q_{1/2}^+})$ and 
\begin{equation*}
	\|u\|_{C^{k, 2 + \alpha}_s(\overline{Q^+_{1/2}})} \leq C \left( \|u\|_{L^{\infty}(Q^+_1)} + \|f\|_{C^{k,\alpha}_s(\overline{Q^+_1})} \right) ,
\end{equation*}
where $C$ is a positive constant depending only on $n$, $\lambda$, $\Lambda$, $\gamma$, $k$, $\alpha$, $\|a^{ij}\|_{C^{k,\alpha}_s(\overline{Q^+_1})}$, $\|b^{i}\|_{C^{k,\alpha}_s(\overline{Q^+_1})}$, and $\|c\|_{C^{k,\alpha}_s(\overline{Q^+_1})}$.
\end{theorem}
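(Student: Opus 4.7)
The strategy combines the regularity theory for equations with $s$-polynomial coefficients (Section 5) with a generalized coefficient-freezing argument, avoiding bootstrap because differentiating \eqref{eq:main} in $x_n$ fails to preserve the operator class. I would reduce the problem to a Campanato-type pointwise approximation statement: by the very definition of the norm $\|\cdot\|_{C^{k,2+\alpha}_s(\overline{Q_{1/2}^+})}$, it suffices to exhibit, for every $Y \in \overline{Q_{1/2}^+}$ and every $r \in (0,r_0]$, an $s$-polynomial $p^{Y,r}$ of degree less than $k+2+\alpha$ with uniformly bounded coefficients and
\begin{equation*}
  \|u - p^{Y,r}\|_{L^\infty(Q_r^+(Y) \cap Q_{1/2}^+)} \le C\, r^{k+2+\alpha}\bigl(\|u\|_{L^\infty(Q_1^+)} + \|f\|_{C^{k,\alpha}_s(\overline{Q_1^+})}\bigr).
\end{equation*}
Global $C^\alpha_s$ control coming from Section 3 plus the standard interior--boundary gluing from the last part of Section 2 then localizes the remainder of the argument.

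Next I freeze coefficients by $s$-polynomials rather than constants. For each $Y$, let $(a^{ij})^Y$, $(b^i)^Y$, $c^Y$, $f^Y$ denote the best $s$-polynomial approximations of degree less than $k+\alpha$ at $Y$ guaranteed by the $C^{k,\alpha}_s$ hypothesis, and assemble the corresponding operator $L^Y$. Section 5 provides $C^{k,2+\alpha}_s$ estimates up to $\{x_n=0\}$ for solutions of the $s$-polynomial coefficient equation $v_t = L^Y v + f^Y$. Splitting $u = v + w$ on an appropriate intrinsic cube, with $v$ solving this auxiliary equation and matching $u$ on the parabolic boundary (the existence/comparison theory from Section 2 applies since $L^Y$ satisfies \eqref{parabolicity} up to shrinking the cube), the remainder solves
\begin{equation*}
  w_t - L^Y w = (L - L^Y)u + (f - f^Y), \qquad w = 0 \text{ on } \partial_p\bigl(Q_r^+(Y) \cap Q_1^+\bigr).
\end{equation*}
By the $C^{k,\alpha}_s$ regularity of the coefficients and of $f$, combined with the inductively known regularity of $u$, the right-hand side is $O(r^{k+\alpha})$ on $Q_r^+(Y)$; the maximum principle from Section 2 transfers this decay to $w$, while the $s$-polynomial coefficient theory approximates $v$ by an $s$-polynomial of degree less than $k+2+\alpha$. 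Adding the two contributions produces the required $p^{Y,r}$ and closes the estimate.

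The scheme is implemented by induction on $k$. The \emph{base case} $k=0$, which is the $C^{2,\alpha}_s$ estimate, is the main obstacle: only $C^\alpha_s$ (or at best $C^{1,\alpha}_s$, depending on $\gamma$) is known a priori for $u$, which is insufficient to treat $(L - L^Y)u$ as a controlled perturbation. I would instead carry out freezing and iterative decay simultaneously, constructing a sequence of approximating $s$-polynomials $p_r^Y$ at geometric scales $r = \theta^j r_0$ and showing, via the $C^{1,\alpha}$ and $C^{2,\alpha}_s$ estimates for constant-coefficient equations from Section 4, that $\{p_r^Y\}$ is Cauchy; the limit provides the required pointwise $s$-polynomial expansion at $Y$. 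For the \emph{inductive step}, the previously obtained $C^{k-1,2+\alpha}_s$ bound on $u$ promotes $(L - L^Y)u$ to $C^{k-1,\alpha}_s$, so the Section 5 estimates for $s$-polynomial coefficient equations apply at the next order and the induction continues. The excluded set $k+2+\alpha \in \mathcal{D}$ enters exactly here, since resonant degrees would make the best $s$-polynomial approximation non-unique. A secondary technical point is the case $\gamma = 1$, where the $s$-polynomial basis in \eqref{def:s-pol} contains $\sqrt{x_n}\log x_n$ terms; these fit the same algebraic framework but require care in the scaling estimates against the intrinsic distance $s[\cdot,\cdot]$.
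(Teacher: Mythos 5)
Your overall blueprint — Campanato reduction, freezing coefficients by $s$-polynomials, decomposing $u=v+w$ with $w$ controlled by the maximum principle and $v$ handled by the $s$-polynomial coefficient theory, then summing approximations over geometric scales — is the paper's strategy, and the identification of the base case $k=0$ as the genuine obstacle matches \Cref{thm:c2a_hc} and \Cref{bdry_c2a_vc}. However, the inductive step you propose does not close and is also unnecessary. You claim that the $C^{k-1,2+\alpha}_s$ bound on $u$ ``promotes $(L-L^Y)u$ to $C^{k-1,\alpha}_s$,'' and that Schauder estimates at the next order then apply; but a forcing term in $C^{k-1,\alpha}_s$ yields only $C^{k-1,2+\alpha}_s$ regularity for the solution, which is exactly the level you started with, so the induction makes no progress. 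What actually drives \Cref{lem:hi_nzf} is not the H\"older regularity of the residual but its \emph{pointwise decay}: since $|a^{ij}-P^{ij}|\le \varepsilon\, s[X,O]^{k+\alpha}$ by the $C^{k,\alpha}_s$ hypothesis on the coefficients, and the second intrinsic derivatives of $u$ are \emph{bounded} once $u\in C^{2,\alpha}_s$, one gets $|(L-L_p)u(X)|\le C\varepsilon\, s[X,O]^{k+\alpha}$ using \emph{only} $\|u\|_{C^{2,\alpha}_s}$, with no dependence on $k$. Together with the algebraic correction $h$ from \Cref{calc_TBT}, this pointwise decay feeds the maximum-principle bound \eqref{w_1st_hcf} and the $s$-polynomial approximation of $v^l$ from \Cref{lem:pcf_hi_zf} at every scale, so a single iteration-in-scale argument anchored at $O$ reaches the full order $k+2+\alpha$ directly from the $k=0$ a priori estimate. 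In short, there is no induction on $k$; once $C^{2+\alpha}_s$ is in hand, \Cref{lem:hi_nzf} proves all orders simultaneously, and \Cref{cond1_grt} plus \Cref{thm:int_bd_gb} convert the boundary approximation into the global estimate.

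A secondary inaccuracy: the restriction $k+2+\alpha\notin\mathcal D$ is not about non-uniqueness of the best $s$-polynomial approximation. It is needed so that in the scale iteration one can pick $N,M$ with $N+\tfrac{2M}{2-\gamma}$ strictly exceeding $k+2+\alpha$, and so that each $p^l$ contributes exactly the monomials of degree $<k+2+\alpha$; this makes the rescaled differences $P^l(r^lX)-P^{l-1}(r^lX)$ geometrically small in every coefficient, which is what forces $P^l$ to converge to a limiting $s$-polynomial. If $k+2+\alpha$ were itself an admissible degree, the resonant monomial would be scale-invariant and the sum would not converge.
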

\subsection{Maximum Principle}  
First, we will show the maximum principle for the following initial/boundary-value problem
\begin{equation}
\left\{\begin{aligned} \label{eq:IBVP}
 	u_t&= Lu + f  && \textnormal{in } Q_1^+ \\
	u &=g   && \textnormal{on } \partial_p Q_1^+.
\end{aligned}\right.
\end{equation}
\begin{lemma} \label{weak_max1}
Suppose $u \in C^2 (Q_1^+) \cap C(\overline{Q_1^+})$ satisfies $u_t < Lu$. If $u<0$ on $\partial_p Q_1^+$, then 
$$u(X) < 0 \quad \mbox{for all } X \in \overline{Q_1^+} . $$
\end{lemma}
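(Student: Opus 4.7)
The plan is to run the classical strict parabolic maximum principle by contradiction, exploiting the fact that the degeneracy of $L$ is confined to $\{x_n = 0\}$, which lies in $\partial_p Q_1^+$. Suppose toward contradiction that $u(X^*) \geq 0$ at some $X^* \in \overline{Q_1^+}$. By compactness, the maximum $M := \max_{\overline{Q_1^+}} u \geq 0$ is attained at some point $X_0 = (x_0, t_0)$. Since $u < 0$ on $\partial_p Q_1^+$, we must have $X_0 \notin \partial_p Q_1^+$. From the description of $Q_1^+$, the parabolic boundary contains $\{x_n = 0\} \cap \overline{Q_1^+}$, $\{x_n = 1\} \cap \overline{Q_1^+}$, the lateral walls $\{|x_i| = 1\}$ for $i < n$, and the bottom $\{t = -1\}$; only the top $\{t = 0\}$ is excluded. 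Hence $X_0$ has $0 < x_{0,n} < 1$, $|x_{0,i}| < 1$ for $i < n$, and $t_0 \in (-1, 0]$, placing $X_0$ in the open set where $u$ is $C^2$.

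Next I would exploit the standard calculus at the interior maximum $X_0$: $Du(X_0) = 0$, $D^2 u(X_0) \leq 0$ as a symmetric matrix, and $u_t(X_0) \geq 0$ (this is immediate if $t_0 < 0$; for $t_0 = 0$ one uses the left-sided derivative, but since $X_0$ is a left-time maximum we still have $u_t(X_0) \geq 0$). Since $x_{0,n} > 0$, the full second-order coefficient matrix of $L$ at $X_0$ can be written as $\Delta(x_{0,n}) \, (a^{ij}(X_0)) \, \Delta(x_{0,n})$ with $\Delta(x_n) = \mathrm{diag}(1,\dots,1,x_n^{\gamma/2})$, which is positive definite by \eqref{parabolicity}. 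Consequently the pairing of this positive semi-definite matrix with the negative semi-definite matrix $D^2 u(X_0)$ is nonpositive. The first-order terms $b^{i'} u_{i'} + x_n^{\gamma/2} b^n u_n$ vanish at $X_0$ because $Du(X_0) = 0$, and the zero-order term satisfies $c(X_0) u(X_0) \leq 0$ since we may assume $c < 0$ (as noted after \eqref{parabolicity}) and $u(X_0) = M \geq 0$. Combining, $Lu(X_0) \leq 0$, so $(u_t - Lu)(X_0) \geq 0$, directly contradicting the strict hypothesis $u_t < Lu$.

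This forces $M < 0$, which is the conclusion. The whole argument is routine, and the only point requiring any vigilance is the verification that the degenerate face $\{x_n = 0\}$ is included in $\partial_p Q_1^+$: without this inclusion one could fear that $X_0$ might sit on the degeneracy set, where $L$ has a vanishing $(n,n)$-coefficient and positive semi-definiteness of the coefficient matrix alone would not rule out $Lu(X_0) > 0$ after one first takes care of the boundary behavior. Since this issue does not arise here, I expect the proof to be short and self-contained, following the template of the classical strong maximum principle for linear parabolic operators.
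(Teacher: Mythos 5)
Your proof is correct and follows the same basic strategy as the paper's: locate an interior critical point where $u$ is nonnegative and derive a contradiction with $u_t < Lu$ by using that the conjugated Hessian $\mathrm{diag}(I_{n-1},x_n^{\gamma/2})\, D^2 u\, \mathrm{diag}(I_{n-1},x_n^{\gamma/2})$ is negative semi-definite while $(a^{ij})$ is positive definite. The one point where you diverge is the choice of contradiction point: you take the global maximum $X_0$ of $u$, which forces you to invoke the normalization $c<0$ in order to kill the zeroth-order term via $c(X_0)u(X_0) \leq 0$. The paper instead takes $\tau$ to be the first time at which $u$ becomes nonnegative and selects $Y$ with $u(Y)=0$ exactly, so the term $c(Y)u(Y)$ vanishes identically and no sign condition on $c$ is needed. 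Both arguments are valid under the paper's standing normalization, but the paper's variant proves the lemma without any hypothesis on $c$, which is slightly more robust if one wants to reuse the lemma before performing the exponential change of variable $v = e^{-(\Lambda+1)t}u$. Otherwise your reasoning, including the careful remark that $\{x_n=0\}\subset\partial_p Q_1^+$ keeps the degeneracy from interfering, is sound.
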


\begin{proof}
Suppose not. Then there exists $\tau = \inf \{t: u(x,t) \ge 0 \mbox{ for some } (x,t) \in \overline{Q_1^+} \} $.
Since $u \in C(\overline{Q_1^+})$ and $u<0$ on $\partial_p Q_1^+$, we know that there is $Y=(y, \tau) \in \overline{Q_1^+}  \setminus \partial_p Q_1^+$ such that $u(Y) = 0$. Meanwhile $\tau$ is the first time $u$ becomes nonnegative, the function $U(x)\coloneqq u(x,\tau)$ attains its maximum at $y$ and hence $u_t(Y) \ge 0$, $Du(Y) =0$, and $D^2 u(Y) \leq 0$.
Then we have
$$\mathscr{D}^2 u  (Y) \coloneqq \left(\begin{array}{c|c}
  I_{n-1} & \textbf{0} \\ \hline
   \textbf{0} &  y_{n}^{\gamma/2}
\end{array}
\right)
 D^2 u (Y)
\left(\begin{array}{c|c}
  I_{n-1} & \textbf{0} \\ \hline
   \textbf{0} &  y_{n}^{\gamma/2}
\end{array}
\right) \leq 0 $$
and this implies that  $u_t (Y) - Lu(Y) = u_t (Y) - \mbox{tr}\big(a^{ij}(Y) \mathscr{D}^2 u  (Y) \big) \ge 0$. This yields a contradiction to $u_t < Lu$.
\end{proof}
\begin{lemma} \label{weak_max2}
Suppose $u \in C^2 (Q_1^+) \cap C(\overline{Q_1^+})$ satisfies $u_t \leq Lu$. If $u \leq 0$ on $\partial_p Q_1^+$, then $u \leq 0$ in $ \overline{ Q_1^+} $.
\end{lemma}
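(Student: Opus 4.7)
The plan is to reduce Lemma 2.2 to Lemma 2.1 by an approximation that restores the strict inequalities required by the latter. Since the paper already reduces to the case in which the zeroth-order coefficient satisfies $c(X) < 0$ on $\overline{Q_1^+}$, this subtraction can be done with a simple constant barrier rather than an exponential one.

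Concretely, for each $\varepsilon > 0$ I would set $v_\varepsilon = u - \varepsilon$ and verify the two hypotheses of Lemma \ref{weak_max1}. First, on $\partial_p Q_1^+$ the assumption $u \le 0$ immediately gives $v_\varepsilon \le -\varepsilon < 0$. Second, writing the operator $L$ as $L = L_0 + c$, where $L_0$ denotes the part involving only genuine derivatives, one computes
\begin{equation*}
(v_\varepsilon)_t - L v_\varepsilon = u_t - L u + \varepsilon\, c(X) \leq \varepsilon\, c(X) < 0 \quad \text{for every } X \in Q_1^+,
\end{equation*}
using $u_t - Lu \le 0$ together with the standing assumption $c < 0$. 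Thus $v_\varepsilon$ satisfies the strict differential inequality $(v_\varepsilon)_t < L v_\varepsilon$ pointwise in $Q_1^+$, and Lemma \ref{weak_max1} applies to give $v_\varepsilon(X) < 0$ for every $X \in \overline{Q_1^+}$, i.e.\ $u(X) < \varepsilon$ throughout the domain. Letting $\varepsilon \to 0^+$ yields $u \le 0$ in $\overline{Q_1^+}$, which is the desired conclusion.

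There is no real obstacle here; the only point that requires care is observing that the sign assumption $c < 0$ (justified at the end of the Notations subsection via the change of unknown $v = e^{-(\Lambda+1)t} u$) is exactly what is needed to convert the weak inequality $u_t \le Lu$ into the strict inequality for $v_\varepsilon$. If one did not want to invoke that reduction, the same argument works with $v_\varepsilon = u - \varepsilon e^{Kt}$ for any $K > \|c\|_{L^\infty(Q_1^+)}$, since then $(v_\varepsilon)_t - L v_\varepsilon \le -\varepsilon(K - c)e^{Kt} < 0$ and $v_\varepsilon < 0$ on $\partial_p Q_1^+$, and one concludes in the same way by sending $\varepsilon \to 0^+$.
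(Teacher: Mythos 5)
Your proof is correct and takes essentially the same approach as the paper: subtract a small positive function to restore the strict inequality needed for \Cref{weak_max1}, then let $\varepsilon \to 0^+$. The paper perturbs by $\varepsilon(t+2)$ (whose time derivative alone supplies the strict sign, so only $c \le 0$ is needed), whereas your constant $\varepsilon$ relies on the strict negativity $c<0$ coming from the paper's reduction, and your fallback $\varepsilon e^{Kt}$ needs no sign assumption on $c$ at all; all three variants are sound.
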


\begin{proof}
For any $\varepsilon > 0$, consider $v(X) = u(X) -\varepsilon (t+2)$. Then we have $v_t < Lv$ in $Q_1^+$ and $v  < 0$ on $\partial_p Q_1^+$. Thus, by \Cref{weak_max1} $u(X) < \varepsilon (t+2) $ for all $X \in \overline{Q_1^+}$. We finish the proof by letting $\varepsilon \to 0$.
\end{proof}
\begin{lemma} \label{apriori_bound1}
	Suppose $u \in C^2(Q_1^+) \cap C(\overline{Q_1^+})$ is a solution of \eqref{eq:IBVP}. Then
\begin{equation*}
\|u\|_{L^{\infty}(Q_1^+)} \leq C \left( \|f\|_{L^{\infty}(Q_1^+)} +  \|g\|_{L^{\infty}(Q_1^+)} \right) ,
\end{equation*}	
where $C>0$ is a universal constant.
\end{lemma}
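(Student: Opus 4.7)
The plan is a direct supersolution argument via the weak maximum principle (\Cref{weak_max2}). Since the preliminaries have already reduced matters to the case $c \leq 0$ through the substitution $v = e^{-(\Lambda+1)t} u$, I will work under that sign convention throughout.

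The key step is to test against the spatially constant barrier
\[
w(X) := \|g\|_{L^{\infty}(\partial_p Q_1^+)} + (t+1)\,\|f\|_{L^{\infty}(Q_1^+)}.
\]
Because $t \in (-1, 0]$ and both norms are nonnegative, one has $w \geq 0$ throughout $\overline{Q_1^+}$, and since $w$ depends only on $t$ one has $Lw = c\,w$. Thus $w_t - Lw = \|f\|_{L^{\infty}} - c\,w \geq \|f\|_{L^{\infty}} \geq f$, using $c \leq 0$ and $w \geq 0$, while on the parabolic boundary $w \geq \|g\|_{L^{\infty}} \geq g$. Observe that the degenerate second-order term $x_n^{\gamma} a^{nn} D_{nn}$ and the $x_n^{\gamma/2}$-weighted first-order pieces of $L$ simply annihilate this constant-in-$x$ barrier, so the degeneracy at $\{x_n = 0\}$ plays no role whatsoever in the bound.

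Applying \Cref{weak_max2} to $u - w$, which satisfies $(u-w)_t \leq L(u-w)$ in $Q_1^+$ and $u - w \leq 0$ on $\partial_p Q_1^+$, yields $u \leq w \leq \|g\|_{L^{\infty}} + \|f\|_{L^{\infty}}$ on $\overline{Q_1^+}$. Running the identical argument on $-u$, which solves the same class of equation with data $(-f, -g)$ and still has $c \leq 0$, supplies the matching lower bound, and combining the two gives the claimed estimate with universal constant $C = 1$.

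There is essentially no obstacle to this argument; the only conceptual care needed is to ensure the sign condition $c \leq 0$ so that the spatially constant barrier satisfies the correct differential inequality, and this reduction is precisely what the preliminaries have already arranged. Were one to forgo that reduction, the natural variant $w(X) := e^{K(t+1)} \bigl(\|g\|_{L^{\infty}} + (t+1)\|f\|_{L^{\infty}}\bigr)$ with $K$ chosen in terms of $\|c\|_{L^{\infty}}$ would perform the same role, at the cost of a larger universal constant.
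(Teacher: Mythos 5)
Your argument is correct and is essentially the same as the paper's: a spatially constant barrier function combined with the weak maximum principle (\Cref{weak_max2}) applied to $u$ minus the barrier, and then to $-u$, using the standing reduction $c \le 0$. The only difference is cosmetic — you use the affine-in-$t$ barrier $\|g\|_{L^\infty} + (t+1)\|f\|_{L^\infty}$ (giving $C=1$) while the paper uses $e^{t+1}(\|f\|_{L^\infty}+\|g\|_{L^\infty})$ (giving $C=e$).
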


\begin{proof}
Let $v = u - e^{ t + 1 } ( \|f\|_{L^{\infty}(Q_1^+)} +  \|g\|_{L^{\infty}(Q_1^+)} )$. Then we have  $v_t \leq Lv$ in $Q_1^+ $ and $v \leq 0$ on $\partial_p Q_1^+.$ By \Cref{weak_max2}, we have  $u(X) \leq e  ( \|f\|_{L^{\infty}(Q_1^+)} +  \|g\|_{L^{\infty}(Q_1^+)}  ) $ for all $X \in Q_1^+$. Considering the function $w = -u - e^{ t + 1 } ( \|f\|_{L^{\infty}(Q_1^+)} +  \|g\|_{L^{\infty}(Q_1^+)} )$, we can obtain the lower bound in a similar way.
\end{proof}

\begin{lemma} \label{apriori_bound2}
	Suppose $u \in C^2(Q_1^+) \cap C(\overline{Q_1^+})$ is a solution of \eqref{eq:IBVP} with $\gamma<0$ and $x_n^{-\gamma} f \in L^{\infty}(Q_1^+)$.
	Then
\begin{equation*}
\|u\|_{L^{\infty}(Q_1^+)} \leq C \left(\|x_n^{-\gamma} f\|_{L^{\infty}(Q_1^+)} +  \|g\|_{L^{\infty}(Q_1^+)} \right) ,
\end{equation*}	
where $C>0$ is a universal constant.
\end{lemma}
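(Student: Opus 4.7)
The plan is to adapt the barrier argument of \Cref{apriori_bound1} to the setting where $|f|$ blows up like $x_n^\gamma$ as $x_n \to 0^+$. Since $|f(X)| \leq M x_n^\gamma$ with $M := \|x_n^{-\gamma}f\|_{L^\infty(Q_1^+)}$, a purely time-dependent barrier no longer absorbs the forcing, so I add a spatial correction of the form $K - x_n^2/\lambda$, whose image under $-L$ contains the leading term $(2a^{nn}/\lambda)x_n^\gamma \geq 2 x_n^\gamma$. Concretely, setting $G := \|g\|_{L^\infty(Q_1^+)}$, I take
\begin{equation*}
	\phi(x,t) := e^{t+1}\bigl[\, G + M\bigl(K - x_n^2/\lambda\bigr)\,\bigr],
\end{equation*}
where $K \geq 1/\lambda$ is a universal constant to be fixed. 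Because $c \leq 0$, we have $1-c, 1-2c \geq 1$, and automatically $\phi \geq e^{t+1}G \geq |g|$ on $\partial_p Q_1^+$.

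The crux is to verify $\phi_t - L\phi \geq |f|$ in $Q_1^+$. Direct computation gives
\begin{equation*}
	\phi_t - L\phi = e^{t+1}\Bigl[(1-c)G + M\bigl((1-2c)(K - x_n^2/\lambda) + \tfrac{2}{\lambda}a^{nn}x_n^\gamma + \tfrac{2}{\lambda}b^n x_n^{\gamma/2+1}\bigr)\Bigr],
\end{equation*}
so everything reduces to controlling the cross term $\tfrac{2}{\lambda}b^n x_n^{\gamma/2+1}$ against the main term $\tfrac{2}{\lambda}a^{nn}x_n^\gamma \geq 2x_n^\gamma$. The key technical inequality is the Young-type decomposition
\begin{equation*}
	x_n^{\gamma/2+1} \leq \eta\, x_n^\gamma + C(\eta,\gamma) \quad \text{for all } x_n \in (0,1],
\end{equation*}
which I prove by splitting at $x_n = \eta^{1/(1-\gamma/2)}$: below this threshold $x_n^{1-\gamma/2} \leq \eta$ yields the first term, and above it $x_n^{\gamma/2+1}$ is bounded by a constant depending only on $\eta$ and $\gamma$ (with the subcases $\gamma > -2$, $\gamma = -2$, $\gamma < -2$ handled separately, since $\gamma/2+1$ changes sign). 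Choosing $\eta := \lambda/(2\Lambda)$ gives $\tfrac{2}{\lambda}b^n x_n^{\gamma/2+1} \geq -x_n^\gamma - C'$ with $C'$ universal, and then picking $K$ large enough so that $(1-2c)(K-1/\lambda) \geq C'$ yields $\phi_t - L\phi \geq M x_n^\gamma \geq |f|$ throughout $Q_1^+$.

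With this in place, applying \Cref{weak_max2} to $u - \phi$ gives $u \leq \phi$, and applied to $-u - \phi$ gives $u \geq -\phi$; hence $|u| \leq \phi \leq e(G + KM)$, which is exactly the asserted bound. The main (and really only) technical obstacle is the Young-type inequality in the second paragraph: the naive pointwise bound $x_n^{\gamma/2+1} \leq x_n^\gamma$ (valid on $(0,1]$ since $\gamma < 2$) would only leave a coefficient $2(\lambda - \Lambda)/\lambda$ in front of $x_n^\gamma$, which is nonpositive whenever $\lambda < \Lambda$, so the $\eta$-splitting is essential to recover a strictly positive margin before absorbing the rest into $K$.
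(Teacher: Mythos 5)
Your proof is correct and takes a genuinely different route to absorbing the first-order cross term, which is the only real obstacle in this lemma. The paper's barrier is $w = e^{\mu(t+1)}M(x_n - x_n^2)$, and the one-line verification there leans on the factor $\mu(x_n - x_n^2)$ to control $-b^n(1-2x_n)x_n^{\gamma/2}$; but that coefficient vanishes as $x_n \to 1$, where $2a^{nn}x_n^\gamma - |b^n|x_n^{\gamma/2}$ can approach $2\lambda - \Lambda < \lambda$ when $\lambda < \Lambda$, so the paper's claimed inequality $w_t - Lw \geq \lambda M x_n^\gamma$ is delicate (arguably has a gap) in the corner $(x_n,t) \approx (1,-1)$ where $e^{\mu(t+1)} \to 1$. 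Your barrier $\phi = e^{t+1}\bigl[G + M(K - x_n^2/\lambda)\bigr]$ carries a constant reserve $K$ and absorbs the cross term $\tfrac{2}{\lambda}b^n x_n^{\gamma/2+1}$ through the Young-type inequality $x_n^{\gamma/2+1} \leq \eta x_n^\gamma + C(\eta,\gamma)$: with $\eta = \lambda/(2\Lambda)$, the first piece is dominated by $\tfrac{2}{\lambda}a^{nn}x_n^\gamma \geq 2x_n^\gamma$ while the remainder is a universal constant swallowed into $K$, giving a margin that holds uniformly on $(0,1]$ rather than one that disappears at $x_n = 1$. Your three-case split according to the sign of $\gamma/2+1$ is exactly the right bookkeeping for the interpolation, and the closing application of the weak maximum principle to $\pm u - \phi$ is standard. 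One cosmetic slip: the coefficient multiplying $K - x_n^2/\lambda$ in your expansion of $\phi_t - L\phi$ should be $1-c$, not $1-2c$; since $c \leq 0$, both exceed $1$, so nothing is affected.
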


\begin{proof}
Let $w = e^{ \mu(t + 1) } \|x_n^{-\gamma}f\|_{L^{\infty}(Q_1^+)}(x_n-x_n^2)$. Then for sufficiently large $\mu>0$, we have  $w_t - Lw \ge \lambda\|x_n^{-\gamma}f\|_{L^{\infty}(Q_1^+)} x_n^{\gamma}$ in $Q_1^+ $. As in the proof \Cref{apriori_bound1}, we can see the desired result, by considering the function $v=u-(w/\lambda + \|g\|_{L^{\infty}(Q_1^+)} )$.
\end{proof}
\subsection{Existence of Solutions}
In this subsection, we assume that $a^{ij}$, $b^i$, $c$, and $f$ are $C_{\textnormal{loc}}^{\alpha}(Q_1^+)$ for the existence of a classical solution.   
\begin{definition} 
A  function $v \in C(\overline{Q_1^+})$ is called a subsolution (resp. supersolution) of \eqref{eq:IBVP} if 
\begin{equation*} 
	v \leq g \, (\textnormal{resp}. \ge g) \quad \text{on }  \partial_p Q_1^+
\end{equation*}
and if for any $\Omega \subset  Q_1^+ $, the solution $\tilde{v} \in C^2(\Omega) \cap C(\overline{\Omega})$ of
\begin{equation*}
\left\{\begin{aligned}
	\tilde{v}_t &= L\tilde{v} + f   &&\mbox{in } \Omega \\
	\tilde{v} &=v   &&\mbox{on }\partial_p \Omega  
\end{aligned}\right.
\end{equation*}	
	is greater (resp. less) than or equal to $v$ in $\Omega $. 
\end{definition}	
\begin{lemma}[Comparison Principle] \label{com_prin}
If $w \in  C(\overline{Q_1^+})$ is a supersolution of \eqref{eq:IBVP} and  $v \in C(\overline{Q_1^+})$ is a subsolution of \eqref{eq:IBVP}, then
\begin{equation*}
w \ge v \quad \text{in } Q_1^+.
\end{equation*}	
\end{lemma}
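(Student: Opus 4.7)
The plan is a standard Perron-type comparison argument: assume for contradiction that the set $E = \{X \in Q_1^+ : v(X) > w(X)\}$ is non-empty, then use the very definitions of sub- and supersolutions on $\Omega = E$ to derive $v \leq w$ on $\Omega$, a contradiction. The continuity of $v$ and $w$, together with the boundary inequalities $v \leq g \leq w$ on $\partial_p Q_1^+$, will give us clean boundary data on $\Omega$.

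First, I would examine the parabolic boundary $\partial_p \Omega$ and verify that $v = w$ there. For any $Y \in \partial_p \Omega$ there are two cases: if $Y$ lies in the interior cylinder $Q_1^+$, then continuity of $v-w$ together with the fact that $Y$ is both a limit of points in $E$ (so $v(Y) \geq w(Y)$) and a limit of points in $E^c$ (so $v(Y) \leq w(Y)$) forces $v(Y) = w(Y)$; if $Y \in \partial_p Q_1^+$, then $v(Y) \leq g(Y) \leq w(Y)$ from the sub/super-solution inequalities on the boundary of the cylinder, while $v(Y) \geq w(Y)$ from $Y \in \overline{E}$, so again $v(Y) = w(Y) = g(Y)$. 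In both cases, the common values $v|_{\partial_p \Omega} = w|_{\partial_p \Omega}$ give a continuous boundary datum on $\Omega$.

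Next, I would invoke the Dirichlet problem on $\Omega$ which is built into the Perron-style definition of sub- and supersolutions. Let $\tilde{u} \in C^2(\Omega) \cap C(\overline{\Omega})$ solve
\begin{equation*}
\left\{\begin{aligned}
\tilde{u}_t &= L\tilde{u} + f && \text{in } \Omega, \\
\tilde{u} &= v\, (=w) && \text{on } \partial_p \Omega.
\end{aligned}\right.
\end{equation*}
By the subsolution property applied to $\Omega$, $\tilde{u} \geq v$ in $\Omega$; by the supersolution property applied to $\Omega$, $\tilde{u} \leq w$ in $\Omega$. Chaining these gives $v \leq \tilde{u} \leq w$ on $\Omega$, directly contradicting the defining property $v > w$ on $\Omega = E$. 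Hence $E = \emptyset$, which is the desired conclusion $w \geq v$ in $Q_1^+$.

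The main obstacle in such an argument is usually the existence of the classical solution $\tilde{u}$ on the possibly irregular set $\Omega$, especially because $\Omega$ may reach the degenerate/singular portion $\{x_n = 0\}$ of the boundary, where neither standard Schauder theory nor the classical barriers apply directly. In the present setup, however, this issue is absorbed into the definition of sub- and supersolutions itself (which asserts the existence of $\tilde{v}$ on every subdomain), so the comparison principle reduces to the clean boundary-matching argument above. The only care needed is in the case analysis for $\partial_p \Omega$ to ensure that $v$ and $w$ agree on the effective boundary data—this is what makes the Perron mechanism go through without appealing to any pointwise maximum principle at the potentially degenerate boundary.
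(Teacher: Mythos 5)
Your argument hinges on applying the sub/super-solution definition with $\Omega = E = \{v > w\}$, but this is where it breaks. The definition of a sub/supersolution is (as is standard in Perron's method) a \emph{conditional} statement: \emph{if} a classical solution $\tilde{v} \in C^2(\Omega)\cap C(\overline{\Omega})$ of the Dirichlet problem on $\Omega$ exists, then it lies on the right side of $v$. It does not assert existence of such a solution for every open $\Omega \subset Q_1^+$, and reading it that way makes the definition circular in a paper whose whole point is to establish solvability and regularity near the degenerate boundary. The set $E$ is an arbitrary open set which may have wild topological boundary and, more seriously, may have closure touching the degenerate/singular set $\{x_n = 0\}$ (nothing in the hypotheses prevents this). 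On such an $\Omega$ the operator is not uniformly parabolic up to the boundary and classical theory gives you no $\tilde{u}$ to compare with. Your closing paragraph explicitly flags this obstacle and then claims it is ``absorbed into the definition'' — that dismissal is the misreading.

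The paper avoids the issue by \emph{not} working on $E$. Instead, it takes a point $Y=(y,\tau)$ where $h=v-w$ attains a positive maximum $M$ (necessarily with $y_n>0$, since $v\le w$ on $\partial_p Q_1^+$), and chooses the explicit cylinder $\Omega = \{X \in Q_1^+ : x_n > y_n/2,\ t<\tau\}$, which is bounded away from $\{x_n=0\}$; there the operator is uniformly parabolic and the two Dirichlet problems (with data $v$ and $w$ respectively) are classically solvable. It then sets $\tilde h = \tilde v - \tilde w - M$, notes $\tilde h\le 0$ on $\partial_p\Omega$ while $\tilde h(Y)\ge 0$, and invokes the \emph{strong} maximum principle (not just comparison) to force $\tilde h \equiv 0$, hence $v-w\equiv M>0$ on $\partial_p\Omega$; since $\partial_p\Omega$ meets $\partial_p Q_1^+$, this contradicts $v\le g\le w$. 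So the contradiction is propagated outward to the nice boundary rather than located inside a bad set. To repair your proof you would either need to establish solvability of the Dirichlet problem on arbitrary open $\Omega\subset Q_1^+$ (far stronger than anything available at this point in the paper), or replace $E$ by a well-chosen cylinder away from $\{x_n=0\}$ as the paper does — at which point you no longer have $v=w$ on $\partial_p\Omega$ and must use the strong maximum principle to finish, recovering the paper's argument.
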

\begin{proof}
Suppose $h=v-w $ has a positive maximum value $M$ achieved at some point $Y=(y,\tau) \in \overline{Q_1^+} \setminus  \partial_p Q_1^+$. Then, we have $h(Y) = M   >0$ and $h(X) \leq M$ for all $X \in \Omega,$ where $\Omega=\{X \in Q_1^+: x_n > y_n/2, \, t < \tau\}$. Take $\tilde{v} \in C^2(\Omega) \cap C(\overline{\Omega})$ and $\tilde{w}\in C^2(\Omega) \cap C(\overline{\Omega})$ satisfying 
\begin{equation*}
\left\{\begin{aligned}
	\tilde{v}_t &= L\tilde{v} + f   && \text{in } \Omega  \\
	\tilde{v} &=v   && \text{on }\partial_p \Omega 
\end{aligned}\right.
\qquad \text{and} \qquad 
\left\{\begin{aligned}
	\tilde{w}_t &= L\tilde{w} + f   && \text{in } \Omega \\
	\tilde{w} &=w   && \text{on }\partial_p \Omega  .
\end{aligned}\right.
\end{equation*}
Then, the function $\tilde{h} = \tilde{v}-\tilde{w}-M$ satisfying $\tilde{h}_t \leq L\tilde{h}$ in $\Omega$ and $\tilde{h} \leq 0$ on $\partial_p \Omega$. Furthermore, since $v$ and $w$ are subsolution and supersolution of \eqref{eq:IBVP}, respectively, we know that $\tilde{v} \ge v$ in $\Omega$ and $\tilde{w} \leq w$ in $\Omega$. Hence, we have $\tilde{h}(Y)  \ge 0 $. 
The operator $(\partial_t - L)$ is a uniformly parabolic in $\Omega$ since $\Omega$ is away from the boundary $\{x_n=0\}$. By the strong maximum principle for uniformly parabolic equations, we have $\tilde{h} =  \tilde{v}-\tilde{w}-M  = 0$ in $\overline{\Omega}$. It follows that $v-w = M$ on $\partial_p \Omega$. Since $\partial_p Q_1^+ \cap \partial_p \Omega$ is nonempty, this contradicts the assumption that $ v \leq g \leq w \quad \text{on } \partial_p Q_1^+$.
\end{proof}
\begin{lemma} 
Let $k \in \mathbb{N}_{0}$, $0<\alpha<1$, and assume
$$a^{ij}, \, b^i, \, c, \, f \in C^{k,\alpha}_s(\overline{Q_1^+})~(i,j=1,2,\cdots,n) \qquad \text{and} \qquad  g \in C(\overline{Q_1^+}).$$
Then there exists a unique bounded solution $u \in C_{\textnormal{loc}}^{k+2,\alpha}(Q_1^+)\cap C(\overline{Q_1^+})$ of \eqref{eq:IBVP} such that
$$\|u \|_{L^{\infty}(Q_1^+)} \leq C \left( \|f\|_{L^{\infty}(Q_1^+)} +  \|g\|_{L^{\infty}(Q_1^+)} \right) ,$$
where $C>0$ is a universal constant.
\end{lemma}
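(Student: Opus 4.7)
Uniqueness and the $L^\infty$ estimate are immediate from the earlier results: two solutions of \eqref{eq:IBVP} with the same data are simultaneously sub- and supersolutions, so \Cref{com_prin} forces them to coincide, while the bound is the content of \Cref{apriori_bound1}. What requires work is existence, which I would establish by a regularization-and-limit argument.

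For the regularization, introduce a uniformly parabolic operator $L_\delta$ obtained from $L$ by replacing the degenerate/singular weight $x_n^{\gamma}$ (resp.\ $x_n^{\gamma/2}$) by $\rho_\delta(x_n)^{\gamma}$ (resp.\ $\rho_\delta(x_n)^{\gamma/2}$), where $\rho_\delta$ is a smooth function equal to $x_n$ on $\{x_n \ge \delta\}$ and bounded strictly between two positive constants elsewhere. After mollifying $g$ (and, if desired, the coefficients) to smooth approximations, classical parabolic Schauder theory supplies a unique $u_\delta \in C^{k+2,\alpha}_{\mathrm{loc}}(Q_1^+) \cap C(\overline{Q_1^+})$ solving the regularized problem with boundary value $g_\delta$. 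Since the proof of \Cref{apriori_bound1} uses only the structural parabolicity and the $L^\infty$ bound on the zero-order term, both of which $L_\delta$ inherits uniformly in $\delta$, we obtain $\|u_\delta\|_{L^\infty(Q_1^+)} \le C(\|f\|_{L^\infty}+\|g\|_{L^\infty})$ independent of $\delta$.

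Next I pass to the limit. On any compact $K \subset Q_1^+ \setminus \{x_n = 0\}$ the operator $L_\delta$ coincides with $L$ for all sufficiently small $\delta$, and $L$ is uniformly parabolic on $K$ with bounds depending only on $\mathrm{dist}(K,\{x_n=0\})$; the coefficients are classically $C^{k,\alpha}$ on $K$ because $C^{k,\alpha}_s$ reduces to the standard H\"older class away from the degenerate face. Interior parabolic Schauder estimates therefore supply uniform $C^{k+2,\alpha}(K)$ bounds for $u_\delta$, and a diagonal Arzel\`a--Ascoli extraction produces a subsequence $u_{\delta_j}\to u$ in $C^{k+2,\alpha}_{\mathrm{loc}}(Q_1^+)$. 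The limit satisfies $u_t = Lu + f$ pointwise in $Q_1^+$ and inherits the $L^\infty$ bound, while continuity of $u$ at points of $\partial_p Q_1^+$ away from $\{x_n=0\}$ follows from the classical uniformly parabolic boundary barrier.

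The principal obstacle is attainment of $g$ at the degenerate face, uniformly in $\delta$. At $Y=(y',0,\tau)\in\{x_n=0\}\cap\partial_p Q_1^+$ I would build an explicit barrier adapted to the intrinsic distance $s$, satisfying $\psi(Y)=0$, $\psi\ge0$ on a small intrinsic cylinder about $Y$, and $(\partial_t-L_\delta)\psi\ge 1$ uniformly in $\delta$. A natural candidate, motivated by the $s$-polynomial structure \eqref{def:s-pol} and the explicit ansatz \eqref{sol_simple}, has the form
$$\psi(X) = |x'-y'|^2 + A\,x_n^{2-\gamma} + B(\tau-t) \qquad (\gamma<1),$$
with an analogous logarithmic correction when $\gamma = 1$, where $A$ and $B$ are chosen via \eqref{parabolicity} so that the principal term $x_n^\gamma a^{nn}\psi_{nn}$ contributes a controlled constant and the remaining terms are uniformly bounded. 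Combined with the modulus of continuity of $g$ at $Y$, the barrier and \Cref{weak_max2} force $|u_\delta(X)-g(Y)|\to 0$ as $X\to Y$, uniformly in $\delta$. Letting $\delta\to 0$ then yields $u\in C(\overline{Q_1^+})$ with $u=g$ on $\partial_p Q_1^+$, completing the existence proof.
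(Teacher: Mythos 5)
Your architecture — uniqueness and the a~priori bound from \Cref{com_prin} and \Cref{apriori_bound1}, existence by regularizing the degenerate weight, extracting a limit via interior Schauder estimates on compact subsets of $\{x_n>0\}$, and recovering boundary continuity at $\{x_n=0\}$ by a barrier — is a legitimate alternative to the paper's argument, which instead uses Perron's method directly (taking $u=\sup_{w\in\cA}w$ over bounded subsolutions and invoking the local uniform parabolicity to get interior $C^{k+2,\alpha}$ regularity, then a barrier at $\{x_n=0\}$). Both routes reduce the real difficulty to the same point: producing a barrier at a degenerate boundary point.

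There is, however, a concrete error in your barrier. With $\psi=|x'-y'|^2+A\,x_n^{2-\gamma}+B(\tau-t)$ one computes $\psi_{nn}=A(2-\gamma)(1-\gamma)x_n^{-\gamma}$, so the principal contribution to $(\partial_t-L)\psi$ is $-x_n^{\gamma}a^{nn}\psi_{nn}=-A(2-\gamma)(1-\gamma)a^{nn}$. For $\gamma<1$ this has the sign of $-A$. Nonnegativity of $\psi$ (which fails already at $(y',x_n,\tau)$ unless $A\ge0$) forces $A\ge0$, so the principal term is nonpositive and the remaining terms $-B$, $-2\sum_{i'}a^{i'i'}$, and the first-order pieces cannot rescue a strict lower bound $(\partial_t-L)\psi\ge c_0>0$ without taking $B<0$, which again destroys nonnegativity. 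The missing ingredient is a \emph{linear} term in $x_n$: the paper's barrier is of the form $|x'-y'|^2+(t-\tau)^2+\frac{x_n}{r}-\left(\frac{x_n}{r}\right)^{2-\gamma}$ (with an extra $(x_n/r)^{2-\gamma/2}$ correction when $\gamma<0$), for which $w\ge0$ on $\{0<x_n<r\}$ because $\frac{x_n}{r}\ge(\frac{x_n}{r})^{2-\gamma}$, while $w_{nn}<0$ produces $-x_n^\gamma a^{nn}w_{nn}\ge(2-\gamma)(1-\gamma)\lambda\,r^{-(2-\gamma)}>0$, a term that can be made to dominate everything else by taking $r$ small. Your proposed $\psi$ cannot be salvaged by tuning $A,B$. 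In addition, if you use the regularized operator $L_\delta$ you need the barrier inequality to hold uniformly in $\delta$: for $\gamma>0$ the factor $\rho_\delta(x_n)^\gamma\psi_{nn}\sim\delta^\gamma x_n^{-\gamma}$ blows up near $x_n=0$, which is only helpful if the sign is favorable (i.e.\ $w_{nn}<0$), reinforcing that the correct barrier shape is essential. With the corrected barrier, the rest of your limiting argument goes through.
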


\begin{proof}
Let $\cA$ be a set of bounded subsolutions of \eqref{eq:IBVP} and let  $u=\sup_{w\in \cA}w$. Consider the function $\varphi_{\pm}=\pm e^{ t + 1}   ( \|f\|_{L^{\infty}(Q_1^+)} +  \|g\|_{L^{\infty}(Q_1^+)}  )$. Note first that $\varphi_+$ is a supersolution of \eqref{eq:IBVP} and $\varphi_{-} $ is a subsolution of \eqref{eq:IBVP}, so $u$ is bounded by $\varphi_{\pm}$ by \Cref{com_prin}. For any cube $Q_r(Y) \subset \joinrel \subset Q_1^+$, the operator $(\partial_t - L)$ is a uniformly parabolic in $Q_r(Y)$ since $Q_r(Y)$ is away from the boundary $\{x_n=0\}$. Thus, we can apply Perron's method for the problem \eqref{eq:IBVP} as in \cite[Lemma 3.3]{KLY23} and hence $u \in C_{\textnormal{loc}}^{k+2,\alpha} (Q_1^+)$. Finally, for any $Y=(y,\tau) \in \{ X \in \partial_p Q_1^+ : x_n = 0 \}$, take a local barrier $w$ near $Y$ given by
\begin{equation*}
	w(x,t)=|x'-y'|^2+  (t-\tau)^2 + 
	\begin{dcases}
		\frac{2x_n}{r} - \left(\frac{x_n}{r}\right)^{2-\gamma/2} - \left(\frac{x_n}{r}\right)^{2-\gamma}& (\gamma <0)\\
		\frac{x_n}{r} - \left(\frac{x_n}{r}\right)^{2-\gamma}& (0 \leq \gamma < 1 )\\
		-\frac{x_n}{r} \log x_n & (\gamma =1) .\\
	\end{dcases}
\end{equation*}
for some small $r>0$. The continuity of $u$ up to the boundary $\{x_n = 0 \}$ is automatically guaranteed since $Y$ is a regular boundary point. In the case $Y \in \{ X \in \partial_p Q_1^+ : x_n \ne 0 \}$, the continuity of $u$ is guaranteed by using the barrier function of the uniformly parabolic equations as in \cite{Lie96}.
\end{proof}
\subsection{Global Regularity} 
The proof of \Cref{thm:main} is sufficient if we show higher regularity of solutions at the boundary. Main equation \eqref{eq:main} is a uniformly parabolic equation in any $\Omega \subset \joinrel \subset Q_1^+$, so if the coefficients and forcing term are in $C_s^{k,\alpha}(\overline{Q_1^+})$, the solution of \eqref{eq:main} is in $C^{k+2,\alpha}_{\textnormal{loc}}(\Omega)$ and global regularity can be obtained by combining interior regularity and boundary regularity. More strictly speaking, it is described by the following lemma and theorem.
\begin{lemma} \label{cond1_grt}
Let  $k \in \mathbb{N}_{0}$, $0<\alpha< 1$, and assume 
$$a^{ij}, \, b^i, \, c, \, f \in C^{k,\alpha}_s(\overline{Q_1^+}) \quad (i,j=1,2,\cdots,n).$$
Suppose $u\in C_{\textnormal{loc}}^{k+2,\alpha}(Q^+_1) $ be a solution of \eqref{eq:main}. Then, 
for each $Y \in \{ X \in \overline{Q^+_{1/2}} : x_n>0\} $, there is an $s$-polynomial $p $ of degree $(k+2)$ at $Y$ such that 
\begin{equation*}
	\|p\|_{C_s^{k+2,\alpha}(\overline{Q_{r/2}^+(Y)}) } \leq C \left( \frac{ \|u\|_{L^{\infty}(Q_{r\rho}^+(Y))} }{r^{k+2+\alpha}}+ \frac{\|f\|_{C^{k,\alpha}_s( \overline{Q_{r\rho}^+(Y)})} }{r^{k+\alpha}}\right)   
\end{equation*}
and
\begin{equation*}
	|u(X)-p (X)| \leq C \left( \frac{ \|u\|_{L^{\infty}(Q_{r\rho}^+(Y))}}{r^{ k+2+\alpha}} + \frac{ \|f\|_{C_s^{k,\alpha}(\overline{Q_{r\rho}^+(Y)})}}{r^{k+\alpha}} \right) s [X,Y]^{k+2+\alpha} .
\end{equation*}
for all $X \in \overline{Q_{r/2}^+(Y)}$ and fixed $\rho \in (1/2,1)$, where $r=y_n^{\frac{2-\gamma}{2}}$ and $C>0$  is a constant which is independent of $Y$. Moreover 
\begin{equation*}
	|p (X)|  \leq C \left( \frac{ \|u\|_{L^{\infty}(Q_{r\rho}^+(Y))} }{r^{k+2+\alpha}}+ \frac{\|f\|_{C^{k,\alpha}_s( \overline{Q_{r\rho}^+(Y)})} }{r^{k+\alpha}}\right)  s[X,Y]^{k + 2 + \alpha}
\end{equation*}
for all $X \in \overline{Q_1^+}$ with $s[X,Y] \ge r/2$. 
\end{lemma}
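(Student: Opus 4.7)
The plan is to rescale around the interior point $Y$ to reduce \eqref{eq:main} to a standard uniformly parabolic equation, apply classical interior Schauder theory, and then reinterpret the resulting Taylor polynomial as an $s$-polynomial at $Y$. Since $y_n > 0$, I set $r = y_n^{(2-\gamma)/2}$ and define
\[
	\Phi_Y(\tilde X) = \bigl(\, y' + r\tilde x', \, (y_n^{(2-\gamma)/2} + r \tilde x_n)^{2/(2-\gamma)}, \, \tau + r^2 \tilde t \,\bigr),
\]
which is a diffeomorphism from the standard parabolic cube $Q_\rho$ onto the intrinsic cube $Q_{r\rho}^+(Y)$ for each $\rho \in (0,1)$, and let $\tilde u(\tilde X) = u(\Phi_Y(\tilde X))$. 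The key observation is that on $Q_{r\rho}^+(Y)$ the inequality $|x_n^{(2-\gamma)/2} - y_n^{(2-\gamma)/2}| \leq \rho \, y_n^{(2-\gamma)/2}$ forces $x_n/y_n$ to stay between two positive constants depending only on $\rho$ and $\gamma$.

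A chain-rule computation will show that $\tilde u$ satisfies $\tilde u_{\tilde t} = \tilde L \tilde u + \tilde f$ on $Q_\rho$, with $\tilde f = r^2 (f \circ \Phi_Y)$ and the coefficients of $\tilde L$ obtained from $(a^{ij}, b^i, c) \circ \Phi_Y$ multiplied by bounded smooth functions of $(x_n/y_n)^{\gamma/2}$; by the previous observation, $\tilde L$ is then uniformly parabolic on $Q_\rho$ with ellipticity constants comparable to $\lambda, \Lambda$. Moreover, on $Q_{r\rho}^+(Y)$ the intrinsic distance $s[\cdot,\cdot]$ is bi-Lipschitz equivalent to $r$ times the pullback of the Euclidean parabolic distance under $\Phi_Y$, with constants depending only on $\rho$ and $\gamma$, because $x_n \mapsto x_n^{(2-\gamma)/2}$ is smooth and bi-Lipschitz on the interval $[(1-\rho)^{2/(2-\gamma)} y_n,\, (1+\rho)^{2/(2-\gamma)} y_n]$. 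Consequently $\|\tilde f\|_{C^{k,\alpha}(\overline{Q_\rho})} \leq C r^2 \|f\|_{C_s^{k,\alpha}(\overline{Q_{r\rho}^+(Y)})}$, and similarly for the rescaled coefficients of $\tilde L$.

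Next I will apply the classical interior parabolic Schauder estimate to $\tilde u$ to produce a Taylor polynomial $\tilde p(\tilde X) = \sum_{|\tilde\beta| + 2\tilde l \leq k+2} \tilde A^{\tilde \beta \tilde l} \tilde x^{\tilde \beta} \tilde t^{\tilde l}$ at the origin with
\[
	\sum_{|\tilde\beta| + 2\tilde l \leq k+2}|\tilde A^{\tilde \beta \tilde l}| + \sup_{\tilde X \neq O} \frac{|\tilde u(\tilde X) - \tilde p(\tilde X)|}{|\tilde X|_{\mathrm{par}}^{k+2+\alpha}} \leq C\bigl(\|\tilde u\|_{L^\infty(Q_\rho)} + \|\tilde f\|_{C^{k,\alpha}(\overline{Q_\rho})}\bigr).
\]
Setting $p(X) = \tilde p(\Phi_Y^{-1}(X))$ and unwinding that $\Phi_Y^{-1}$ replaces $\tilde x_i,\, \tilde x_n,\, \tilde t$ by $(x_i - y_i)/r$, $(x_n^{(2-\gamma)/2} - y_n^{(2-\gamma)/2})/r$, and $(t-\tau)/r^2$, one sees that $p$ is automatically an $s$-polynomial at $Y$ of the form \eqref{def:s-pol} with coefficients $A^{\beta i 0 l} = r^{-(|\beta|+i+2l)} \tilde A^{(\beta,i)\,l}$ (all $j = 0$). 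The $s$-degree of each term equals $|\beta| + i + 2l \leq k+2 < k+2+\alpha$, so the degree constraint is met. The first two inequalities of the lemma then follow by transferring the rescaled Schauder estimate back through $\Phi_Y$, using the identity $|\Phi_Y^{-1}(X)|_{\mathrm{par}} = s[X,Y]/r$.

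For the last inequality, fix $X \in \overline{Q_1^+}$ with $s[X,Y] \geq r/2$. By the definition of the intrinsic distance, each monomial of $p$ is bounded by $|A^{\beta i 0 l}|\, s[X,Y]^{|\beta|+i+2l}$; plugging in $|A^{\beta i 0 l}| \leq C r^{-(|\beta|+i+2l)} B$, with $B$ the bracket in the claim, and factoring
\[
	r^{-(|\beta|+i+2l)} s[X,Y]^{|\beta|+i+2l} = r^{-(k+2+\alpha)} s[X,Y]^{k+2+\alpha} \bigl(r/s[X,Y]\bigr)^{(k+2+\alpha) - (|\beta|+i+2l)},
\]
the last factor is at most $2^{k+2+\alpha}$ since $r/s[X,Y] \leq 2$ and $(k+2+\alpha) - (|\beta|+i+2l) \geq \alpha > 0$, and the required bound follows after summation over the finitely many monomials. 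The main technical hurdle I foresee is the careful verification of the norm equivalence between $C^{k,\alpha}(\overline{Q_\rho})$ and the $r$-rescaled version of $C_s^{k,\alpha}(\overline{Q_{r\rho}^+(Y)})$ under $\Phi_Y$; once this bi-Lipschitz equivalence of the two distances is in hand, the rest is scaling bookkeeping and a direct appeal to the classical interior parabolic Schauder theorem, which applies because $Q_{r\rho}^+(Y)$ stays bounded away from the degenerate boundary $\{x_n = 0\}$.
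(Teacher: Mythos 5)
Your proposal is correct and follows essentially the same route as the paper's proof: rescale via the diffeomorphism that maps a standard cube to the intrinsic cube $Q_{r\rho}^+(Y)$ (the paper centers the rescaled domain at $(0',1,0)$ rather than at the origin, but this is a trivial translation), observe that the rescaled equation is uniformly parabolic with universal ellipticity constants because $x_n/y_n$ stays between positive constants, apply classical interior Schauder estimates to produce a Taylor polynomial, and pull it back to an $s$-polynomial at $Y$ whose coefficient bounds deliver the off-cube estimate. One small imprecision: the rescaled coefficients are not simply $(a^{ij}, b^i, c)\circ\Phi_Y$ times bounded functions of $(x_n/y_n)^{\gamma/2}$ — the chain rule also produces an extra first-order term with a $1/x_n$ factor in the local variable — but since the local $x_n$ is bounded away from $0$ this is still a bounded coefficient, so the conclusion is unaffected.
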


\begin{proof}
For each $Y= (y,\tau) \in \{ X \in \overline{Q^+_{1/2}} : x_n>0\} $,  we consider the function $v$ defined in a standard cube $Q_{\rho} (0',1,0) $ by $v(x,t)= u(y' + r x',  (rx_n)^{\frac{2}{2-\gamma}}, \tau + r^2 t),$ where $\rho \in( 1/2, 1)$ and $r=y_n^{\frac{2-\gamma}{2}}$. Then, $v$ satisfies the uniformly parabolic equation 
\begin{equation} \label{eq:unif_int}
	v_t  = \tilde{a}^{ij}(X) v_{ij} + \tilde{b}^{i}(X) v_{i} - \frac{\gamma}{2-\gamma} \frac{\tilde{a}^{nn}(X)}{x_n} v_n + \tilde{c}(X)v + \tilde{f}(X)  \quad \mbox{in } Q_{\rho} (0',1,0) ,
\end{equation}
where
\begin{equation*}
	\begin{aligned}
		\tilde{a}^{ij}(X)&=
			\begin{cases} 
				a^{ij} (y' + r x',  (rx_n)^{\frac{2}{2-\gamma}}, \tau + r^2 t) &\text{if }i,j\not= n \\
				\frac{2-\gamma}{2} a^{in} (y' + r x',  (rx_n)^{\frac{2}{2-\gamma}}, \tau + r^2 t)& \text{if }i\not=n, j=n \\
				\frac{2-\gamma}{2} a^{nj} (y' + r x',  (rx_n)^{\frac{2}{2-\gamma}}, \tau + r^2 t)& \text{if }i =n, j \ne n \\
				\frac{(2-\gamma)^2}{4}a^{nn} (y' + r x',  (rx_n)^{\frac{2}{2-\gamma}}, \tau + r^2 t) & \text{if } i=j=n,
			\end{cases}\\
		\tilde{b}^{i}(X)&=
			\begin{cases}
				rb^{i} (y' + r x',  (rx_n)^{\frac{2}{2-\gamma}}, \tau + r^2 t) & \text{if }i\not= n \\
				\frac{2-\gamma}{2}rb^{n} (y' + r x',  (rx_n)^{\frac{2}{2-\gamma}}, \tau + r^2 t) & \text{if }i=n,
			\end{cases}\\
		\tilde{c}(X)&=r^2 c (y' + r x',  (rx_n)^{\frac{2}{2-\gamma}}, \tau + r^2 t) ,\\
		\tilde{f}(X)&=r^2 f(y' + r x',  (rx_n)^{\frac{2}{2-\gamma}}, \tau + r^2 t).
	\end{aligned}
\end{equation*}
Since $Q_{r\rho}^+ (Y)$ is away from $\{X \in \partial_p  Q_1^+ : x_n=0\}$, we can show that 
\begin{equation*} 
\left\{\begin{aligned}
	\|\tilde{a}^{ij}\|_{C^{k,\alpha}(\overline{Q_{\rho} (0',1,0)})} &\leq C\|a^{ij}\|_{C^{k,\alpha}_s(\overline{Q_1^+})} \\
	\|\tilde{b}^{i}\|_{C^{k,\alpha}(\overline{Q_{\rho} (0',1,0)})} &\leq C\|b^{i}\|_{C^{k,\alpha}_s(\overline{Q_1^+})} \\	
	\|\tilde{c}\|_{C^{k,\alpha}(\overline{Q_{\rho} (0',1,0)})} &\leq C\|c\|_{C^{k,\alpha}_s(\overline{Q_1^+})} 
\end{aligned} \right.
\end{equation*}
and  $\|\tilde{f}\|_{C^{k,\alpha}(\overline{Q_{\rho} (0',1,0)})}  \leq r^2 C\|f\|_{C^{k,\alpha}_s( \overline{Q_{r\rho}^+(Y)})} $ for some constant $C$ which is independent $Y$. Thus, by the interior Schauder estimate, there exists a polynomial $\tilde{p}$ of degree $(k+2)$ of the form 
\begin{equation*}
	\tilde{p}(X)=\sum_{\substack{ |\beta|+i+2j\leq k+2 \\ \beta \in \mathbb{N}_0^{n-1}, \,i,j \in \mathbb{N}_0 }}A^{\beta ij} {x'}^\beta(x_n-1)^i t^j
\end{equation*}
and
\begin{equation} \label{norm_tilde_p}
	\|\tilde{p}\|_{C^{k+2,\alpha}(\overline{Q_{\rho} (0',1,0)})} \leq C\Big(\|v \|_{L^{\infty}(Q_{\rho} (0',1,0))} + \|\tilde{f}\|_{C^{k,\alpha}(\overline{Q_{\rho} (0',1,0)})} \Big),
\end{equation}
where $C$ depends on $n$, $\lambda$, $\Lambda$, $k$, $\alpha$, $\|\tilde{a}^{ij}\|_{C^{k,\alpha}(\overline{Q_{\rho} (0',1,0)})}$, $\|\tilde{b}^{i}\|_{C^{k,\alpha}(\overline{Q_{\rho} (0',1,0)})}$, and $\|\tilde{c}\|_{C^{k,\alpha}(\overline{Q_{\rho} (0',1,0)})}$ such that
\begin{equation} \label{inq:int_scha}
	|v(X)- \tilde{p}(X)| \leq C\Big( \|v\|_{L^{\infty}(\overline{Q_{\rho} (0',1,0)})} + \|\tilde{f}\|_{C^{k,\alpha}(\overline{Q_{\rho} (0',1,0)})} \Big)d [X,(0',1,0)]^{k+2+\alpha}
\end{equation}
for all $X \in \overline{Q_{1/2} (0',1,0)}$. Putting 
\begin{align*}
	p(X) &=  \tilde{p} \big(r^{-1}(x'-y') , r^{-1} x_n^{\frac{2-\gamma}{2}},r^{-2} (t- \tau)\big) \\
	&= \sum_{\substack{ |\beta|+i+2j\leq k+2 \\ \beta \in \mathbb{N}_0^{n-1}, \,i,j \in \mathbb{N}_0 }} \frac{A^{\beta ij} }{r^{|\beta|+i +2j}} (x'-y')^\beta(x_n^{\frac{2-\gamma}{2}}-y_n^{\frac{2-\gamma}{2}})^i (t-\tau)^j ,
\end{align*}
we can rewrite \eqref{norm_tilde_p} and  \eqref{inq:int_scha} into
\begin{equation*}
	\|p\|_{C^{k+2,\alpha}( \overline{Q_{r/2}^+(Y)} )} \leq C \left( \frac{ \|u\|_{L^{\infty}(Q_{r\rho}^+(Y))} }{r^{k+2+\alpha}}+ \frac{\|f\|_{C^{k,\alpha}_s( \overline{Q_{r\rho}^+(Y)})} }{r^{k+\alpha}}\right)
\end{equation*}
and
\begin{align*}
	&|u(X)- p(X)| \\
	 & \quad \leq  C\Big( \|u\|_{L^{\infty}(Q_{r\rho}^+(Y))} +r^2\|f\|_{C^{k,\alpha}_s( \overline{Q_{r\rho}^+(Y)})} \Big) d [(r^{-1}(x'- y') , r^{-1} x_n^{\frac{2-\gamma}{2}},r^{-2} (t-  \tau)),(0',1,0)]^{k+2+\alpha} \\
	 & \quad = C \left( \frac{\|u\|_{L^{\infty}(Q_{r\rho}^+(Y))}}{r^{k+2+\alpha}} + \frac{\|f\|_{C^{k,\alpha}_s( \overline{Q_{r\rho}^+(Y)})}}{r^{k+\alpha}} \right)  s[X,Y]^{k+2+\alpha} 
\end{align*}
for all $X \in \overline{Q_{r/2}^+(Y)}$. From \eqref{norm_tilde_p}, we know that
\begin{equation*}
	\sum_{\substack{ |\beta|+i+2j\leq k+2 \\ \beta \in \mathbb{N}_0^{n-1}, \,i,j \in \mathbb{N}_0 }}  |A^{\beta i j }| \leq C \Big( \|u\|_{L^{\infty}(Q_{r\rho}^+(Y))}  + r^2 \|f\|_{C^{k,\alpha}_s( \overline{Q_{r\rho}^+(Y)})}  \Big) 
\end{equation*} 
and hence, we have
\begin{align*}
	|p(X)| &\leq  C\left( \frac{ \|u\|_{L^{\infty}(Q_{r\rho}^+(Y))} }{r^{k+2+\alpha}}+ \frac{\|f\|_{C^{k,\alpha}_s( \overline{Q_{r\rho}^+(Y)})} }{r^{k+\alpha}}\right)  \sum_{\substack{ |\beta|+i+2j\leq k+2 \\ \beta \in \mathbb{N}_0^{n-1}, \,i,j \in \mathbb{N}_0 }} \frac{r^{k+2+\alpha}}{r^{|\beta|+i +2j}} s[X,Y]^{|\beta| + i + 2j} \\
	&\leq C \left( \frac{ \|u\|_{L^{\infty}(Q_{r\rho}^+(Y))} }{r^{k+2+\alpha}}+ \frac{\|f\|_{C^{k,\alpha}_s( \overline{Q_{r\rho}^+(Y)})} }{r^{k+\alpha}}\right)  s[X,Y]^{k + 2 + \alpha}
\end{align*}
for all $X \in \overline{Q_1^+}$ with $s[X,Y] \ge r/2$. 
\end{proof}
\begin{theorem} [Global Regularity up to $\{x_n=0\}$] \label{thm:int_bd_gb}
Let $k \in \mathbb{N}_{0}$, $0 < \alpha < 1$, and assume $u \in C(\overline{Q_1^+})$. Suppose the following statements hold:
\begin{enumerate}
\item If for each $Y \in \{ X \in \overline{Q^+_{1/2}} : x_n>0\} $, there is an $s$-polynomial $p^{k+2}$ of degree $(k+2)$ at $Y$ such that 
\begin{equation*}
	\|p^{k+2}\|_{C_s^{k+2,\alpha}(\overline{Q_{r/2}^+(Y)}) } \leq A \left( \frac{ \|u\|_{L^{\infty}(Q_{r\rho}^+(Y))} }{r^{k+2+\alpha}}+ \frac{\|u_t - Lu\|_{C^{k,\alpha}_s( \overline{Q_{r\rho}^+(Y)})} }{r^{k+\alpha}}\right)   
\end{equation*}
and
\begin{equation*}
	|u(X)-p^{k+2}(X)| \leq A \left( \frac{ \|u\|_{L^{\infty}(Q_{r\rho}^+(Y))}}{r^{k+2+\alpha}} + \frac{ \|u_t - L u\|_{C_s^{k,\alpha}(\overline{Q_{r\rho}^+(Y)})}}{r^{k+\alpha}} \right) s[X,Y]^{k+2+\alpha} .
\end{equation*}
for all $X \in \overline{Q_{r/2}^+(Y)}$ and fixed $\rho \in (1/2,1)$, where $r=y_n^{\frac{2-\gamma}{2}}$ and $A>1$ is a constant that is independent of $Y$. Moreover 
\begin{equation*}
	|p^{k+2}(X)|  \leq A \left( \frac{ \|u\|_{L^{\infty}(Q_{r\rho}^+(Y))} }{r^{k+2+\alpha}}+ \frac{\|u_t -Lu\|_{C^{k,\alpha}_s( \overline{Q_{r\rho}^+(Y)})} }{r^{k+\alpha}}\right)  s[X,Y]^{k + 2 + \alpha}
\end{equation*}
for all $X \in \overline{Q_1^+}$ with $s[X,Y] \ge r/2$. 
\item If for each $\tilde{Y} \in \{ X \in \partial_p Q^+_{1/2} :x_n=0\}$, there is an $s$-polynomial $p^m$ of degree $m$ corresponding to $\kappa = k + 2 + \alpha$ at $\tilde{Y} $ and a constant $B>1$ such that 
$$\left\{\begin{aligned}
	|(\partial_t - L)(u-p^m)(X) | &\leq B s[X,\tilde{Y}]^{k +\alpha} \\
	|u(X)-p^m(X)| &\leq B s[X,\tilde{Y}]^{k + 2 +\alpha}  
\end{aligned} \right.$$
for all $X \in \overline{Q^+_1}$ and
\begin{equation*}
	\|p^m\|_{C^{k+2,\alpha}_s(\overline{Q_1^+})} \leq B .
\end{equation*}
\item For each $s$-polynomial $p$ of degree $m$ corresponding to $\kappa = k+2+ \alpha$ with $\|p\|_{C^{k+2,\alpha}_s(\overline{Q_1^+})} \leq B $, $v= u-p$ satisfies (1) again.
\end{enumerate}
Then $u \in C_s^{k+2,\alpha}(\overline{Q^+_{1/2}})$ and 
\begin{equation*}
\|u\|_{ C_s^{k+2,\alpha}(\overline{Q^+_{1/2}})} \leq ABC,
\end{equation*}
where $C=C(k,\alpha)>0$ is a constant. 
\end{theorem}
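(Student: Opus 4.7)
My plan is to verify directly from the definition of $\|\cdot\|_{C_s^{k+2,\alpha}(\overline{Q_{1/2}^+})}$ that for every base point $Y \in \overline{Q_{1/2}^+}$ and every scale $r > 0$, there exists an $s$-polynomial $p_{Y,r}$ at $Y$ of degree less than $k+2+\alpha$ such that $|u(X) - p_{Y,r}(X)| \le C(AB)\, r^{k+2+\alpha}$ on $\overline{Q_r^+(Y)} \cap \overline{Q_{1/2}^+}$, with coefficient sum also bounded by $C(AB)$. For $Y$ on the degenerate boundary $\{x_n=0\}$ this is immediate from hypothesis (2): take $p_{Y,r} = p^m$, which works at every scale since $|u - p^m| \le B\, s[X,Y]^{k+2+\alpha} \le B r^{k+2+\alpha}$ on $Q_r^+(Y)$ and the coefficient sum is controlled by $\|p^m\|_{C_s^{k+2,\alpha}} \le B$.

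For an interior point $Y = (y,\tau)$ with $y_n > 0$, set $r_0 = y_n^{(2-\gamma)/2}$ and let $\tilde Y = (y',0,\tau)$ be the boundary projection, so that $s[Y,\tilde Y] = r_0$. I split the analysis by the size of $r$ relative to $r_0$. At \emph{large scales} $r \ge r_0/4$, I reuse the boundary polynomial $p^m$ at $\tilde Y$: the triangle inequality $s[X,\tilde Y] \le s[X,Y] + r_0 \le 5r$ on $Q_r^+(Y)$ gives $|u - p^m| \le CBr^{k+2+\alpha}$. Because the norm demands an $s$-polynomial based at $Y$, I exploit $\|p^m\|_{C_s^{k+2,\alpha}(\overline{Q_1^+})} \le B$ to extract an $s$-polynomial $q$ at $Y$ within $Br^{k+2+\alpha}$ of $p^m$ on $Q_r^+(Y)$, with coefficient sum $\le B$; setting $p_{Y,r} = q$ finishes this case.

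At \emph{small scales} $r < r_0/4$, I combine the boundary and interior estimates. Let $v = u - p^m$; by hypothesis (3), $v$ satisfies (1). Since $s[X,\tilde Y] \le 2r_0$ on the enlarged cube $Q_{r_0\rho}^+(Y)$, the pointwise bounds of (2) yield $\|v\|_{L^\infty(Q_{r_0\rho}^+(Y))} \le CBr_0^{k+2+\alpha}$ and, after a rescaling argument, $\|(\partial_t - L)v\|_{C_s^{k,\alpha}(\overline{Q_{r_0\rho}^+(Y)})} \le CBr_0^{k+\alpha}$. These bounds exactly cancel the $r_0^{-(k+2+\alpha)}$ and $r_0^{-(k+\alpha)}$ weights on the right-hand side of (1), producing an $s$-polynomial $p_v^{k+2}$ at $Y$ with $|v - p_v^{k+2}| \le CAB\, s[X,Y]^{k+2+\alpha}$ on $Q_{r_0/2}^+(Y)$; for $X \in Q_r^+(Y) \subset Q_{r_0/2}^+(Y)$ this is $\le CAB\, r^{k+2+\alpha}$. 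Recentering $p^m$ at $Y$ exactly as in the large-scale case and adding yields the required $p_{Y,r}$.

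The main obstacle is promoting the \emph{pointwise} bound $|(\partial_t - L)v| \le B\, s[X,\tilde Y]^{k+\alpha}$ from (2) to a genuine $C_s^{k,\alpha}$ bound on $Q_{r_0\rho}^+(Y)$. This is handled by the rescaling $X \mapsto (y'+r_0 x', (r_0 x_n)^{2/(2-\gamma)}, \tau+r_0^2 t)$ used in the proof of \Cref{cond1_grt}: it sends $Q_{r_0\rho}^+(Y)$ to a fixed cube staying uniformly away from $\{x_n=0\}$, where the rescaled $v$ solves a uniformly parabolic equation with $C^{k,\alpha}$ data and the pointwise decay yields a uniform $C^{k,\alpha}$ bound, which then pulls back to the claimed $C_s^{k,\alpha}$ estimate with the correct scaling in $r_0$. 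The remaining work—triangle inequalities for $s$, combining constants, and re-expanding $s$-polynomials based at $\tilde Y$ as $s$-polynomials based at $Y$ (routine since $y_n > 0$ makes $x_n$ and $x_n^{(2-\gamma)/2}$ smooth near $y_n$)—is bookkeeping.
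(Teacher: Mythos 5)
Your overall strategy matches the paper's: both treat boundary points trivially via $p^m$, and for interior points both write $u = p^m + v$ and apply hypothesis (1) to $v$. The main structural difference is that you split by scale ($r$ large versus small relative to $r_0 = y_n^{(2-\gamma)/2}$) and reuse only $p^m$ at large scales, whereas the paper produces a single candidate $p^m + p^{k+2}$ for each interior $Y$ and verifies $|u - p^m - p^{k+2}| \le ABC\, s[X,Y]^{k+2+\alpha}$ on all of $\overline{Q_1^+}$. The \emph{moreover} clause of hypothesis (1)---the control of $|p^{k+2}(X)|$ when $s[X,Y] \ge r/2$---is there precisely to enable this single-polynomial argument: far from $Y$ one uses $|u - p^m| \le B\,s[X,\tilde Y]^{k+2+\alpha}$ together with $s[X,\tilde Y] \le 3\,s[X,Y]$ and the growth bound on $p^{k+2}$. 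Your scale-splitting arrives at the same place, but it duplicates work the \emph{moreover} clause already does and forces you to recenter $p^m$ at $Y$ in two separate regimes; notice that you never invoke that clause, which is a sign that a designed-in shortcut was missed.

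On the difficulty you flag as the ``main obstacle'': you are right that hypothesis (2) only supplies a pointwise bound $|(\partial_t - L)(u - p^m)(X)| \le B\,s[X,\tilde Y]^{k+\alpha}$, whereas (1) asks for $\|u_t - Lu\|_{C_s^{k,\alpha}(\overline{Q_{r\rho}^+(Y)})}$. The paper's own proof does not resolve this either: its estimate on $(v_t - Lv)$ is only an $L^\infty$ bound, and it is then combined with the $C_s^{k,\alpha}$-normed hypothesis of (1) as though the two were the same. However, the fix you sketch does not close the gap. Invoking interior Schauder regularity after the intrinsic rescaling would require a $C^{k,\alpha}$ bound on the rescaled forcing term as an \emph{input}, which is exactly what you are trying to \emph{output}; pointwise decay alone does not give oscillation control, so the argument is circular. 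In the paper's actual applications the gap is harmless, because there $v_t - Lv = f - (\partial_t - L)p^m$ with $f \in C_s^{k,\alpha}$, $\|p^m\|_{C_s^{k+2,\alpha}} \le B$, and the coefficients of $L$ in $C_s^{k,\alpha}$; but all of that lives outside the abstract hypotheses (1)--(3). A clean version of the statement would strengthen (2) to a $C_s^{k,\alpha}$-type bound on $(\partial_t - L)(u - p^m)$ rather than a pointwise one; with that change your small-scale step works directly with no rescaling argument at all.
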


\begin{proof}
For each $Y = (y,\tau) \in \overline{Q^+_{1/2}}$, we just need to find an $s$-polynomial $p$ that satisfies 
\begin{equation*}  	
	\|p\|_{C_s^{k+2,\alpha}(\overline{Q_1^+}) } \leq ABC  
\end{equation*}
and
\begin{equation*}
	|u(X)-p(X)| \leq ABC s[X,Y]^{k+2+\alpha} \quad \mbox{for all } X \in \overline{Q^+_1}.
\end{equation*}
If $y_n=0$, then our goal $p$ is just $p^m$ in (2). Let us think about $y_n>0$ case. Let $\tilde{Y} = (y',0,\tau)$. From (2), there is an $s$-polynomial $p^m$ of degree $m$ corresponding to $\kappa = k+2 + \alpha$ at $\tilde{Y}$ and a constant $B>1$ such that 
\begin{equation}\label{bd_reg} 
	\left\{\begin{aligned}
		|(\partial_t - L)(u-p^m)(X) | &\leq B s[X,\tilde{Y}]^{k +\alpha}  \\
		|u(X)-p^m(X)| &\leq B s[X,\tilde{Y}]^{k+2+\alpha}  
	\end{aligned} \right.
\end{equation}
for all $X \in \overline{Q^+_1}$ and $\|p^m\|_{C^{k+2,\alpha}_s(\overline{Q_1^+})} \leq B$. Now applying (1) on the function $v = u - p^m $, we have an $s$-polynomial $p^{k+2}$ of degree $(k+2)$ such that 
\begin{equation} \label{norm_pk2}
	\|p^{k+2}\|_{C_s^{k+2,\alpha}(\overline{Q_{r/2}^+(Y)}) } \leq A \left( \frac{ \|v\|_{L^{\infty}(Q_{r\rho}^+(Y))} }{r^{k+2+\alpha}}+ \frac{\|v_t - Lv\|_{C^{k,\alpha}_s( \overline{Q_{r\rho}^+(Y)})} }{r^{k+\alpha}}\right)   
\end{equation}
and
\begin{equation} \label{int_reg}
	|v(X)-p^{k+2}(X)| \leq A \left( \frac{ \|v\|_{L^{\infty}(Q_{r\rho}^+(Y))}}{r^{k+2+\alpha}} + \frac{ \| v_t - L v\|_{C_s^{k,\alpha}(\overline{Q_{r\rho}^+(Y)})}}{r^{k+\alpha}} \right) s[X,Y]^{k+2+\alpha} 
\end{equation}
for all $X \in \overline{Q_{r/2}^+(Y)}$ and $\rho \in (1/2,1)$, where $r=y_n^{\frac{2-\gamma}{2}}$ and $A>1$ is a constant that is independent of $Y$. Moreover 
\begin{equation} \label{est_pk2_out}
	|p^{k+2}(X)|  \leq A \left( \frac{ \|v\|_{L^{\infty}(Q_{r\rho}^+(Y))} }{r^{k+2+\alpha}}+ \frac{\|v_t -Lv\|_{C^{k,\alpha}_s( \overline{Q_{r\rho}^+(Y)})} }{r^{k+\alpha}}\right)  s[X,Y]^{k + 2 + \alpha}
\end{equation}
for all $X \in \overline{Q_1^+}$ with $s[X,Y] \ge r/2$. From \eqref{bd_reg}, we see
\begin{align} 
	|v(X)| &\leq B s[X,\tilde{Y}]^{k+2+\alpha} \leq B \big(s[X,Y] + s[Y,\tilde{Y}] \big)^{k+2+\alpha} \leq B \left(r\rho + y_n^{\frac{2-\gamma}{2}} \right)^{k+2+\alpha} \nonumber \\
	& \leq BC r^{k+2+\alpha}  \label{est_v}
\end{align}
for all $X \in \overline{Q_{r\rho}^+(Y)}$ and we also see
\begin{equation} \label{est_vt-Lv}
	|( v_t - L v)(X)|  \leq BC r^{k+\alpha}
\end{equation}
for all $X \in \overline{Q_{r\rho}^+(Y)}$. Thus, combining \eqref{norm_pk2}, \eqref{est_v}, and \eqref{est_vt-Lv} gives  
\begin{equation} \label{norm_pk2'}  	
	\|p^{k+2}\|_{C_s^{k+2,\alpha}(\overline{Q_{r/2}^+(Y)}) } \leq ABC.
\end{equation}
Since $p^{k+2}$ is an $s$-polynomial, we can extend the domain $\overline{Q_{r/2}^+(Y)} $ to $\overline{Q_{1}^+}$ in \eqref{norm_pk2'}. Furthermore, combining \eqref{int_reg}, \eqref{est_v}, and \eqref{est_vt-Lv} gives  
$$|u(X)-p^m(X)-p^{k+2}(X)| \leq ABC s[X,Y]^{k+2+\alpha} \quad \text{for all }X \in \overline{Q_{r/2}^+(Y)}.$$ 
Meanwhile, combining \eqref{bd_reg}, \eqref{est_pk2_out}, \eqref{est_v}, and \eqref{est_vt-Lv} gives
\begin{align*}
	|u(X)-p^m(X)-p^{k+2}(X)| & \leq |u(X)-p^m(X)| + |p^{k+2}(X)| \\
	&\leq B  s[X,\tilde{Y}]^{k+2+\alpha}  + ABC s[X,Y]^{k + 2 + \alpha} \\
	&\leq ABC s[X,Y]^{k+2+\alpha} \
\end{align*}
for all $X \in \overline{Q_1^+}$ with $s[X,Y] \ge r/2$. 
\end{proof}
Thanks to \Cref{thm:int_bd_gb} and \eqref{norm_rel}, we are only concerned with showing regularity at the boundary.
%
%
\section{$C_s^{\alpha}$-Regularity} \label{sec:hol_reg}
The goal of this section is to show global $C_s^{\alpha}$-regularity of solutions for \eqref{eq:main}. In \Cref{sec:reg_cc}, we obtain boundary $C^{1,\alpha}$-regularity of solutions for equations with constant coefficients using the result of this section, and in \Cref{sec:gst}, it is used to prove the approximation lemma.
\begin{lemma} \label{lip_est}
Let $u\in C^2(Q^+_1)\cap C(\overline{Q^+_1})$ be a solution of \eqref{eq:main} satisfying $\|u\|_{L^{\infty}(Q^+_1)}\leq 1$ and $u =0$ on $\{X \in \partial_p Q_1^+ : x_n=0\}$ and for some nonnegative constant $ \delta \leq \min \{1 - \gamma/2, 1 - \gamma \}$ and $M > 0$, if $f$ satisfies
\begin{equation} \label{blowup_f}
|f(X)| \leq M x_n^{- \delta} \quad \mbox{for all } X \in  Q_1^+ ,
\end{equation}
then
\begin{equation*}
|u(X) | \leq  
	\begin{cases} 
		C x_n & (\gamma + \delta <1 ) \\ 
 		-Cx_n \log x_n  & (\gamma + \delta = 1)
	\end{cases} 
\end{equation*}
for all $X \in \overline{Q^+_{1/2}}$, where $C>0$ is a constant depending only on $n$, $\lambda$, $\Lambda$, $\gamma$, and $M$.  
\end{lemma}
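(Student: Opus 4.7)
The plan is to prove the estimate via an explicit barrier and the comparison principle (\Cref{weak_max2}). First, using the substitution $u \mapsto e^{-(\Lambda+1)t}u$ noted at the end of the Notations section, I may assume without loss of generality that the zeroth-order coefficient $c$ of $L$ satisfies $c\leq -1$; this only perturbs the constants in the conclusion by a universal factor. I would then construct a barrier $w$ on $Q_1^+$ that dominates $|u|$ on $\partial_p Q_1^+$ and satisfies $w_t - Lw \geq Mx_n^{-\delta} \geq |f|$, and bound $w \leq C x_n$ (resp.\ $-Cx_n\log x_n$) on $Q_{1/2}^+$. Applying the comparison principle to $w\pm u$ then yields $|u|\leq w$ in $Q_1^+$, and in particular the claimed estimate in $Q_{1/2}^+$.

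The barrier has the form
\[
w(x,t) = A\,h(x_n) + B\bigl(1 - \chi(x')\phi(x_n)\psi(t)\bigr),
\]
where
\[
h(x_n) = \begin{cases} x_n - x_n^\sigma, \quad \sigma = 2-\gamma-\delta, & \text{if } \gamma + \delta < 1, \\ -x_n\log x_n, & \text{if } \gamma + \delta = 1, \end{cases}
\]
and $\chi,\phi,\psi$ are smooth cutoffs satisfying $\chi\phi\psi\equiv 1$ on $Q_{1/2}^+$ with $\chi$ supported in $\{|x'|\leq 3/4\}$, $\phi$ in $[0,3/4]$, $\psi$ in $(-9/16,0]$. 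The central computation---a direct differentiation---yields
\[
-Lh(x_n) \geq \lambda\sigma(\sigma-1)x_n^{-\delta} + |c|h(x_n) - K_0,
\]
with an analogous bound (with $\sigma(\sigma-1)$ replaced by $1$) in the logarithmic case. The hypotheses $\delta \leq 1-\gamma$ and $\delta \leq 1-\gamma/2$ are used precisely to guarantee that all ``lower-order'' terms in $-Lh$---carrying the exponents $\gamma/2$, $1-\gamma/2-\delta$, $1$, and $\sigma$---have nonnegative powers of $x_n$, hence are uniformly bounded on $[0,1]$ and can be absorbed into the universal constant $K_0$. The positive contribution $|c|h$ is generated by the reduction $c\leq -1$.

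For the boundary check: on $\{x_n=0\}$, $h(0)=0$ and $w = B(1-\chi\psi)\geq 0 = |u|$; on the remainder of $\partial_p Q_1^+$, at least one of $\chi,\phi,\psi$ vanishes, so $w\geq B \geq 1 \geq |u|$ provided $B\geq 1$. On $Q_{1/2}^+$, $\chi\phi\psi \equiv 1$ kills the second piece of the barrier, leaving $w = Ah(x_n) \leq Ax_n$ (resp.\ $-Ax_n\log x_n$). To verify the supersolution inequality in $Q_1^+$, I combine the estimate on $-Lh$ with the bounds $|L(\chi\phi\psi)|,|(\chi\phi\psi)_t|\leq K_\eta$ (universal) to obtain
\[
w_t - Lw \geq A\lambda\sigma(\sigma-1)x_n^{-\delta} + A|c|h - AK_0 + B(|c| - K_\eta)(1-\chi\phi\psi),
\]
and then choose $A$ proportional to $M$ (with prefactor depending on $K_0$ and $\lambda\sigma(\sigma-1)$) and $B$ large enough so that the right-hand side is $\geq Mx_n^{-\delta}$ throughout $Q_1^+$.

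The main obstacle is the calibration of constants to make the supersolution inequality hold in the ``core'' $\{\chi\phi\psi = 1\}$, where the second piece of $w$ vanishes: here only the $A$-term contributes, and one must use both the singular leading term $A\lambda\sigma(\sigma-1)x_n^{-\delta}$ near $\{x_n=0\}$ and the bounded positive contribution $A|c|h(x_n)$ (coming from $c\leq -1$) to absorb the universal error $AK_0$ where $x_n$ is bounded away from $0$. The analysis becomes delicate as $\gamma+\delta\nearrow 1$, since $\sigma(\sigma-1)\to 0$; this is precisely why the borderline case $\gamma+\delta = 1$ requires the logarithmic barrier $-x_n\log x_n$, which yields the $\log$-corrected bound stated in the lemma.
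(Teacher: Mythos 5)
The main gap is in the case $\gamma < 0$, which the hypotheses of the lemma do not exclude (e.g.\ $\gamma=-2$, $\delta=0$ satisfies $\delta \leq \min\{1-\gamma/2,\,1-\gamma\}$). You assert that the lower-order terms in $-Lh$ carry the exponents $\gamma/2$, $1-\gamma/2-\delta$, $1$, $\sigma$, all nonnegative, but the exponent $\gamma/2$ coming from $-x_n^{\gamma/2}b^n h_n$ with $h_n(0)=1$ is \emph{negative} whenever $\gamma<0$, so $x_n^{\gamma/2}$ blows up at $\{x_n=0\}$ and cannot be absorbed into a universal constant $K_0$. It also cannot be absorbed into the positive piece $|c|h$ (which vanishes at $x_n=0$), and it is dominated by the leading term $\lambda\sigma(\sigma-1)x_n^{-\delta}$ only when $\gamma/2 \geq -\delta$, i.e.\ $\delta \geq -\gamma/2$; this fails exactly when $\gamma+\delta<0$ and $\delta$ is small. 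Your normalization $c\leq -1$ does not touch this term. So as written, the supersolution inequality $w_t - Lw \geq Mx_n^{-\delta}$ does not hold near $\{x_n=0\}$ in that range.

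The paper gets around this with two devices you would need to import. First, for $\gamma+\delta<0$ the barrier contains an extra monomial $x_n^{2-\gamma/2}$: its second derivative produces a term $\propto a^{nn}x_n^{\gamma/2}$ of the correct sign, which pointwise dominates $3b^n x_n^{\gamma/2}$ provided $|b^n|$ is small relative to $\lambda(2-\gamma/2)(1-\gamma/2)$. Second, to make $|b^n|$ (and also $|f|x_n^\delta$) small, the paper performs a preliminary intrinsic rescaling $u\mapsto u(2^{-1}\varepsilon^2 X)$, which shrinks the drift coefficients to $O(\varepsilon^2)$ while leaving $a^{ij}$ and the ellipticity constants unchanged; the barrier then closes for $\varepsilon$ small depending only on universal constants. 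Without this rescaling, even the terms you do identify (bounded by $\Lambda$ rather than $\varepsilon$) cannot in general be calibrated against the leading coefficient $\lambda\sigma(\sigma-1)$, which shrinks to $0$ as $\gamma+\delta\nearrow1$. Apart from this, the overall barrier-plus-comparison-principle strategy, the choice $\sigma=2-\gamma-\delta$, and the logarithmic barrier at $\gamma+\delta=1$ all match the paper's argument.
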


\begin{proof}
For $\varepsilon < \min\{1, 2/\Lambda, 2/M \}$, we consider the function $\tilde{u}$ defined in $\mathcal{Q}_{1}$ by $\tilde{u}(X)=u  ( 2^{-1} \varepsilon^2X ),$ where $\mathcal{Q}_{\rho}  = \{(x,t) :|x_i| < 2 \rho \varepsilon^{-1} \, ( 1 \leq i < n), \, 0 < x_n <\rho^{\frac{2}{2-\gamma}}, \, - 2 \varepsilon^{-2} \rho^2 < t \leq 0 \} .$
Then, $\tilde{u}$ is a soltuion of $ \tilde{u}_t  = \tilde{L} \tilde{u} + \tilde{f} $ in $ \mathcal{Q}_{1} $ satisfying $\|\tilde{u}\|_{L^{\infty}( \mathcal{Q}_{1})}\leq 1$ and $\tilde{u}=0$ on $\{X \in  \partial_p \mathcal{Q}_{1} : x_n=0\}$, where
\begin{equation*}
	\tilde{L}  =\tilde{a}^{i'j'}(X)D_{i'j'} + 2 x_n^{\gamma/2} \tilde{a}^{i'n}(X) D_{i'n} + x_n^{\gamma} \tilde{a}^{nn}(X)D_{nn} + \tilde{b}^{i'}(X) D_{i'} + x_n^{\gamma/2} \tilde{b}^n(X) D_n + \tilde{c}(X) 
\end{equation*}
with 
$$\tilde{a}^{ij}(X) =a^{ij}   ( 2^{-1} \varepsilon^2 X  ), \quad 
\tilde{b}^{i}(X) =2^{-1} \varepsilon^2 b^i   ( 2^{-1} \varepsilon^2 X), \quad 
\tilde{c}(X) = (2^{-1} \varepsilon^2 )^2c  ( 2^{-1} \varepsilon^2 X), $$
and $\tilde{f}(X) = (2^{-1} \varepsilon^2 )^2  f  ( 2^{-1} \varepsilon^2 X)$. Since $\|b^i\|_{L^{\infty}(Q^+_1)} \leq \Lambda ~(i=1,2,\cdots,n)$ and \eqref{blowup_f}, we have  $|\tilde{b}^i|\leq 2^{-1} \varepsilon^2 \Lambda < \varepsilon$ $(i=1,2,\cdots,n) $ in $ \mathcal{Q}_{1}$ and 
$$ |\tilde{f}(X)| \leq (2^{-1} \varepsilon^2 )^{2 - \frac{2\delta}{2-\gamma} } M x_n^{-\delta} \leq  2^{-1} \varepsilon^2 M x_n^{-\delta} \leq \varepsilon x_n^{-\delta} \quad \mbox{for all } X \in \mathcal{Q}_1 .$$
From these observations, we may assume without loss of generality that $u\in C^2(\mathcal{Q}_{1})\cap C(\overline{\mathcal{Q}_{1}})$ is a solution of  $u_t = Lu + f $ in $\mathcal{Q}_{1}$ satisfying $\|u\|_{L^{\infty}(\mathcal{Q}_1)}\leq 1$ and $u =0$ on $\{X \in \partial_p  \mathcal{Q}_{1} :x_n=0\}$, 
where the coefficients $b^i$, $c$, and the forcing term $f$ satisfy $|b^i | \leq \varepsilon$ $( i = 1,2, \cdots, n)$, $c \leq 0$ in $ \mathcal{Q}_{1},$ and $|f(X)| \leq \varepsilon x_n^{-\delta} $ for all $X \in \mathcal{Q}_{1}$.   Now, we define $\Omega_{\varepsilon} =\{(x,t):|x_i| < \varepsilon^{-1} \, (1 \leq i <n), \,0 < x_n <1, \, -\varepsilon^{-2} < t \leq 0 \}.$
For each $Y=(y,\tau) \in \overline{\mathcal{Q}_{1/2}}$, consider the function $v$ defined in $\Omega_{\varepsilon} \subset \mathcal{Q}_{1}$ by
$$v(x,t) = u (x' + y',x_n, t + \tau ) + \varphi (x_n) -( |x'|^2 - t)  \varepsilon^2,$$
where
\begin{equation*}  
\varphi (x_n)= \begin{cases}
x_n^{2-\gamma - \delta} + x_n^{2 - \gamma/ 2 } - 3 x_n  & (\gamma + \delta < 0) \\
x_n^{2-\gamma - \delta}- 2 x_n & (0 \leq \gamma + \delta <1 ) \\
x_n \log x_n -  x_n  & (\gamma + \delta =1). \\
\end{cases}
\end{equation*}
Then $v\leq0$ on $\partial_p \Omega_{\varepsilon}$ and for $\varepsilon < \frac{1}{3}\left(2-\frac{\gamma}{2}\right)\left(1-\frac{\gamma}{2}\right) \lambda $, we have
$$ v_t - \hat{L} v \leq x_n^{-\delta} \left( - \lambda \eta(\gamma,\delta)  + \Big(5 + \frac{3}{2} |\gamma| \Big) \varepsilon + [1 + 2(n-1) ( \Lambda +1) ] \varepsilon^2  \right) $$
in $\Omega_{\varepsilon}$, where 
\begin{equation*}
	\hat{L}  =\hat{a}^{i'j'}(X) D_{i'j'} + 2 x_n^{\gamma/2} \hat{a}^{i'n}(X)  D_{i'n} + x_n^{\gamma} \hat{a}^{nn}(X)  D_{nn} + \hat{b}^{i'}(X)  D_{i'} + x_n^{\gamma/2} \hat{b}^n(X)  D_n + \hat{c}(X)  
\end{equation*}
with
\begin{equation*}
	\hat{a}^{ij}(X) = a^{ij} (x' + y',x_n, t + \tau ), \quad
	\hat{b}^{i}(X)  = b^i (x' + y',x_n, t + \tau ), \quad
	\hat{c}(X) = c (x' + y',x_n, t + \tau ),
\end{equation*}
and 
\begin{equation*}
	\eta(\gamma,\delta) = \begin{cases}
	(2-\gamma-\delta)(1-\gamma-\delta) & (\gamma+\delta \ne 1)\\
	1 & (\gamma+\delta =1).
	\end{cases}
\end{equation*}
For sufficiently small $\varepsilon>0$, we obtain $v_t - \hat{L}v<0 $ in $\Omega_{\varepsilon}$. By \Cref{weak_max2}, we have
\begin{equation*}
\begin{aligned}
u(x' + y',x_n, t + \tau )\leq ( |x'|^2  - t ) \varepsilon^2 - \varphi (x_n) \quad \mbox{for all } (x,t) \in \overline{\Omega_{\varepsilon}}.
\end{aligned}
\end{equation*}
Thus, if we take $(x',x_n,t) = (0',y_n, 0)$, then
\begin{equation*}
	u(Y)\leq -\varphi (y_n) \leq 
		\begin{cases} 
			3 y_n & (\gamma + \delta < 1) \\ 
			y_n(1-\log y_n) & (\gamma + \delta =1) 
		\end{cases} 
\end{equation*}
for all $Y \in \overline{\mathcal{Q}_{1/2}}$. Next, consider the following function to find the lower bound
\begin{equation*}
	w(x,t)= -u (x' + y',x_n, t + \tau ) + \varphi (x_n) - (|x'|^2 - t) \varepsilon^2  .
\end{equation*}
Similarly, we can obtain the lower bound. Hence, we conclude that
\begin{equation*}
|u(X) | \leq  
	\begin{cases} 
		C x_n & (\gamma + \delta <1 ) \\ 
		-C x_n \log x_n  & (\gamma + \delta = 1) 
	\end{cases} 
\end{equation*}
for all $X \in \overline{Q^+_{1/2}}$.
\end{proof}
\begin{corollary} \label{bdry_holder}
Let $0 < \alpha < \min \{\frac{2}{2-\gamma}, 1 \}$, and assume $f \in L^{\infty}(Q_1^+)$. Suppose $u\in C^2(Q^+_1)\cap C(\overline{Q^+_1})$ is a solution of \eqref{eq:main} satisfying $u =0$ on $\{X \in \partial_p Q_1^+ : x_n=0\}$. Then $u \in C_s^{\alpha}(\overline{Q^+_{1/2}})$ and
$$\|u\|_{C_s^{\alpha}(\overline{Q^+_{1/2}})} \leq C \left(\|u\|_{L^{\infty}(Q_1^+)} + \|f\|_{L^{\infty}(Q_1^+)}\right),$$
where $C>0$ is a universal constant.
\end{corollary}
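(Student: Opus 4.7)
The plan is to verify the Campanato-type characterization built into the definition of the $C_s^\alpha$ norm: for every $Y\in\overline{Q_{1/2}^+}$ and every $r>0$ one exhibits a constant $c=c(Y,r)$ (which is the zeroth-order $s$-polynomial, since $\alpha<1$) with $|c|+r^{-\alpha}\sup_{X\in\overline{Q_r^+(Y)}\cap\overline{Q_{1/2}^+}}|u(X)-c|\le C$. By linearity I first normalize $\|u\|_{L^\infty(Q_1^+)}+\|f\|_{L^\infty(Q_1^+)}\le 1$; for $r$ bounded below (say $r\ge 1/8$) the bound is trivial with $c=0$, so only small $r$ require attention. The argument then splits according to the intrinsic distance $d:=y_n^{(2-\gamma)/2}$ of $Y$ to the degenerate face $\{x_n=0\}$.

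In Case A, where $d\le 2r$, the cube $Q_r^+(Y)$ reaches close to the boundary, and I take $c=0$. I invoke \Cref{lip_est} with $\delta=0$ and $M=\|f\|_{L^\infty}\le 1$; this is admissible since $\gamma+\delta\le 1$. The lemma yields $|u(X)|\le Cx_n$ for $\gamma<1$ and $|u(X)|\le -Cx_n\log x_n$ for $\gamma=1$ on $\overline{Q_{1/2}^+}$. For $X\in Q_r^+(Y)$, the intrinsic triangle inequality gives $x_n^{(2-\gamma)/2}\le y_n^{(2-\gamma)/2}+r\le 3r$, hence $x_n\le(3r)^{2/(2-\gamma)}$, so $|u(X)|\le Cr^{2/(2-\gamma)}$ (or $Cr^2\log(1/r)$ when $\gamma=1$). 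Both quantities are absorbed by $Cr^\alpha$ for $r\le 1$ precisely because $\alpha<\min\{2/(2-\gamma),1\}$.

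In Case B, where $d>2r$, the cube $Q_r^+(Y)$ lies in the interior region where $L$ is uniformly parabolic, and I take $c=u(Y)$. Following the rescaling of \Cref{cond1_grt}, set $v(\xi,s)=u(y'+d\xi',(d\xi_n)^{2/(2-\gamma)},\tau+d^2 s)$; then $v$ solves the uniformly parabolic equation \eqref{eq:unif_int} on a standard cube $Q_\rho(0',1,0)$. The sharpened boundary bound from \Cref{lip_est} upgrades the naive $\|v\|_{L^\infty}\le 1$ to $\|v\|_{L^\infty(Q_\rho)}\le Cd^{2/(2-\gamma)}$, and the scaled source satisfies $\|\tilde f\|_{L^\infty}\le Cd^2\le Cd^{2/(2-\gamma)}$ (since $\gamma\le 1$ and $d\le 1$). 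The interior parabolic Hölder estimate (Krylov--Safonov) applied to the rescaled equation then gives $|v(\xi,s)-v(0',1,0)|\le Cd^{2/(2-\gamma)}\,d[(\xi,s),(0',1,0)]^{\alpha}$, and unfolding this produces $|u(X)-u(Y)|\le Cd^{2/(2-\gamma)-\alpha}\,s[X,Y]^{\alpha}\le Cr^{\alpha}$ using $d\le 1$ and $\alpha\le 2/(2-\gamma)$; the bound $|c|=|u(Y)|\le Cy_n\le C$ is immediate from \Cref{lip_est}.

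The main obstacle is Case B: unrescaled Krylov--Safonov alone would yield only a fixed Hölder exponent, but the sharpened $L^\infty$ bound on $v$ extracted from \Cref{lip_est} produces the extra factor $d^{2/(2-\gamma)-\alpha}\le 1$ that absorbs the scaling loss and lets the argument reach any $\alpha$ in the stated range. Assembling both cases and undoing the normalization yields $\|u\|_{C_s^\alpha(\overline{Q_{1/2}^+})}\le C(\|u\|_{L^\infty(Q_1^+)}+\|f\|_{L^\infty(Q_1^+)})$, as required.
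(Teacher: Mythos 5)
Your proof is correct and takes essentially the same approach as the paper: both use \Cref{lip_est} to obtain the near-boundary bound $|u(X)|\lesssim x_n$ (or $-x_n\log x_n$ when $\gamma=1$), and both handle interior points by the scaling $v(\xi,s)=u(y'+r\xi',(r\xi_n)^{2/(2-\gamma)},\tau+r^2 s)$ with $r=y_n^{(2-\gamma)/2}$, invoking interior H\"older regularity for the resulting uniformly parabolic equation and using the lip\_est bound to extract the needed factor $r^\alpha$ from $\|v\|_{L^\infty}$. Your explicit dichotomy $d\le 2r$ vs.\ $d>2r$ is simply a cleaner bookkeeping of what the paper does by combining the boundary estimate at $\tilde Y=(y',0,\tau)$ with the rescaled interior estimate; the one small omission is that in Case~B you write $\|v\|_{L^\infty}\le Cd^{2/(2-\gamma)}$ without noting the logarithmic correction when $\gamma=1$, but since $-d^{2-\alpha}\log d$ is bounded on $(0,1]$ (as $\alpha<1$) this does not affect the conclusion.
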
 

\begin{proof}
By \Cref{lip_est}, we have
\begin{align*}
	|u(X) | &\leq  
	\begin{cases} 
		C \left(\|u\|_{L^{\infty}(Q_1^+)} + \|f\|_{L^{\infty}(Q_1^+)}\right) x_n & (\gamma  <1 ) \\ 
		 -C  \left(\|u\|_{L^{\infty}(Q_1^+)} + \|f\|_{L^{\infty}(Q_1^+)}\right) x_n \log x_n  & (\gamma = 1)
	\end{cases} \\
	&\leq  C \left(\|u\|_{L^{\infty}(Q_1^+)} + \|f\|_{L^{\infty}(Q_1^+)}\right)  s[X,O]^{\alpha} 
\end{align*}
for all $X \in \overline{Q_{1/2}^+}$, since the function $x_n^{1-\alpha/2} \log x_n$ is bounded. Thus, we know that for each $Y= (y,\tau) \in \{ X \in \overline{Q^+_{1/2}} : x_n>0\} $, 
\begin{equation}  \label{bd_hol} 
	|u(X)|  \leq C\left(\|u\|_{L^{\infty}(Q_1^+)} + \|f\|_{L^{\infty}(Q_1^+)}\right) s[X,\tilde{Y}]^{\alpha} \quad \mbox{for all } X \in \overline{Q_1^+},
\end{equation}
where $\tilde{Y}= (y',0,\tau)$, since \eqref{eq:main} is invariant for translation in the $x'$-direction. We now consider the function $v$ defined in a standard cube $Q_{\rho} (0',1,0) $ by $v(x,t)=  u(y' + r x',  (rx_n)^{\frac{2}{2-\gamma}}, \tau + r^2 t),$ where fixed $\rho \in( 1/2, 1)$ and $r=y_n^{\frac{2-\gamma}{2}}$ as in the proof of \Cref{cond1_grt}. Then $v$ is a solution of the uniformly parabolic equation \eqref{eq:unif_int}. By the interior H\"older regularity, 
\begin{equation} \label{inq:int_hol}
	|v(X)- v(0',1,0)| \leq C \Big( \|v\|_{L^{\infty}(\overline{Q_{\rho} (0',1,0)})} + \|\tilde{f}\|_{L^{\infty}(\overline{Q_{\rho} (0',1,0)})} \Big)d [X,(0',1,0)]^{\alpha}
\end{equation}
for all $X \in \overline{Q_{1/2} (0',1,0)}$. From \eqref{bd_hol}, we see
\begin{equation} \label{est_v_hol}
	|u(X)|  \leq C\left(\|u\|_{L^{\infty}(Q_1^+)} + \|f\|_{L^{\infty}(Q_1^+)}\right) r^{\alpha}  \quad \text{for all  }X \in \overline{Q_{r\rho}^+(Y)}.
\end{equation}
Then, we can rewrite \eqref{inq:int_hol} into
\begin{align*}
	&|u(X)- u(Y)| \\
	 & \quad \leq  C \Big( \|u\|_{L^{\infty}(Q_{r\rho}^+(Y))} +r^2\|f\|_{L^{\infty}( \overline{Q_{r\rho}^+(Y)})} \Big)d [(r^{-1}(x'- y') , r^{-1} x_n^{\frac{2-\gamma}{2}},r^{-2} (t-  \tau)),(0',1,0)]^{\alpha} \\
	 &\quad =  C \left( \frac{\|u\|_{L^{\infty}(Q_{r\rho}^+(Y))}}{r^{\alpha}} +  r^{2-\alpha} \|f\|_{L^{\infty}( \overline{Q_{r\rho}^+(Y)})} \right) d [(x' , x_n^{\frac{2-\gamma}{2}}, t),(y',y_n^{\frac{2-\gamma}{2}},\tau)]^{\alpha} \\
	 & \quad \leq C \left( \|u\|_{L^{\infty}( Q_1^+ )} + \|f\|_{L^{\infty}( Q_1^+)}  \right)  s[X,Y]^{\alpha} 
\end{align*}
for all $X \in \overline{Q_{r/2}^+(Y)}$ which is extensible throughout $\overline{Q_1^+}$ by using \eqref{bd_hol} and \eqref{est_v_hol}.
\end{proof}
\begin{lemma} \label{holder_est}
Let $0 < \alpha < \min \{\frac{2}{ 2-\gamma } ,1\}$ and let $u\in C^2(Q^+_1)\cap C(\overline{Q^+_1})$ be a solution of 
\begin{equation} \label{eq:dirich_prob}
	\left\{ \begin{aligned}
		u_t &= Lu && \mbox{in } Q^+_1\\
		u &= g && \mbox{on } \partial_p Q^+_1
	\end{aligned}\right.
\end{equation}
satisfying $\|u\|_{L^{\infty}(Q^+_1)} \leq 1$ and $g =0$ on $\{X \in \partial_p Q_1^+ : x_n=0\}$. If $ \|g\|_{C_s^{\alpha} (\overline{Q^+_1})} \leq 1$,  then
$$|u(X)| \leq C x_n^{\frac{2-\gamma}{2} \alpha} \quad \mbox{for all } X \in  \overline{Q_1^+}, $$
where $C>0$ is a universal constant.
\end{lemma}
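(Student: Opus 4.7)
The plan is a barrier-comparison argument based on the one-variable supersolution $\phi(X)=A x_n^{\beta}$ with $\beta:=\frac{2-\gamma}{2}\alpha$, exploiting both the $C_s^\alpha$ control of the boundary data and the degeneracy of $L$ near $\{x_n=0\}$. The key preliminary observation is that the hypotheses $g|_{\{x_n=0\}}=0$ and $\|g\|_{C_s^\alpha(\overline{Q_1^+})}\le 1$ upgrade to the pointwise bound
\[
|g(X)|\le x_n^{\frac{2-\gamma}{2}\alpha}\quad\text{on}\ \partial_p Q_1^+,
\]
since for any $X=(x',x_n,t)\in\partial_p Q_1^+$ the projection $\tilde X=(x',0,t)$ lies in the same face of $\partial_p Q_1^+$ and $s[X,\tilde X]=x_n^{\frac{2-\gamma}{2}}$ by the definition of the intrinsic distance.

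Next I would verify that $\phi(X)=A x_n^\beta$ is a classical supersolution of $(\partial_t-L)w=0$ in a small collar $\{0<x_n<r_0\}$. Because $\phi$ depends only on $x_n$, a direct computation gives
\[
\phi_t-L\phi = A\beta(1-\beta)\,a^{nn}\,x_n^{\gamma+\beta-2}-A\beta\, b^n\, x_n^{\gamma/2+\beta-1}-Ac\,x_n^{\beta}.
\]
Under the standing normalization $c\le 0$ noted in the Preliminaries, the last term is nonnegative, and the first two combine into
\[
\phi_t-L\phi\ \ge\ A\beta\, x_n^{\gamma+\beta-2}\bigl[(1-\beta)\lambda-\Lambda\, x_n^{1-\gamma/2}\bigr].
\]
Since $\beta<1$ (thanks to $\alpha<2/(2-\gamma)$) and $1-\gamma/2\ge 1/2$ (thanks to $\gamma\le 1$), the right-hand side is nonnegative whenever $x_n\le r_0$ for a universal
\[
r_0:=\min\Bigl\{1,\ \Bigl(\tfrac{(1-\beta)\lambda}{2\Lambda}\Bigr)^{2/(2-\gamma)}\Bigr\}.
\]

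With $r_0$ fixed and $A:=\max\{1,r_0^{-\beta}\}$, I would invoke the comparison principle on the subdomain $\Omega_{r_0}:=Q_1^+\cap\{x_n<r_0\}$. This is an immediate adaptation of \Cref{weak_max2}, since the only degeneracy of $L$ is on $\{x_n=0\}\subset\partial_p\Omega_{r_0}$, so the maximum-principle argument goes through unchanged. The boundary inequality $\phi\ge|u|$ on $\partial_p\Omega_{r_0}$ is straightforward to check: on $\{x_n=0\}$ both sides vanish; on the lateral and initial faces inherited from $Q_1^+$ the upgraded bound on $g$ gives $|u|=|g|\le x_n^\beta\le\phi$; and on the inner face $\{x_n=r_0\}$ one has $|u|\le 1\le A r_0^\beta=\phi$. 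Applying the principle to both $u-\phi$ and $-u-\phi$ yields $|u|\le A x_n^\beta$ in $\Omega_{r_0}$, while in the remaining region $\{x_n\ge r_0\}$ the trivial bound $|u|\le 1\le r_0^{-\beta} x_n^\beta$ closes the estimate with $C:=\max\{A,r_0^{-\beta}\}$.

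The main technical obstacle is the supersolution verification, which requires the degenerate principal term $a^{nn}x_n^\gamma\phi_{nn}$ to dominate the drift $b^n x_n^{\gamma/2}\phi_n$ as $x_n\to 0^+$. This is exactly where both quantitative hypotheses become sharp: $\alpha<2/(2-\gamma)$ provides the negative curvature $\beta(\beta-1)<0$ that activates the ellipticity with the correct sign, while $\gamma\le 1$ keeps the exponent $1-\gamma/2$ bounded away from zero so that the drift contribution is genuinely of lower order near the degenerate face.
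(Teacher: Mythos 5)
Your argument is correct and is essentially the paper's own proof: the paper also compares $u$ against the barrier $C x_n^{\frac{2-\gamma}{2}\alpha}$ on a collar $\{0<x_n<K\}$ (your $r_0$ plays the role of the paper's $K$, determined by the same balancing of $\lambda(1-\beta)$ against $\Lambda x_n^{1-\gamma/2}$), invokes \Cref{weak_max2}, and closes with the trivial bound away from $\{x_n=0\}$. The only cosmetic differences are that the paper absorbs your separate constants $A$ and $r_0^{-\beta}$ into a single $C$, and that the upgraded boundary bound $|g|\le C x_n^{\frac{2-\gamma}{2}\alpha}$ should carry a universal constant rather than be exactly $1$.
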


\begin{proof}
Let  $K=  \left\{ \frac{\lambda}{2\Lambda} \left(1 -\frac{(2-\gamma) \alpha}{2} \right)\right\}^{\frac{2}{2-\gamma}} 
 \in (0,1)$.
Since $\|g\|_{C_s^{\alpha} (\overline{Q^+_1})} \leq 1$ and $\|u\|_{L^{\infty} (Q^+_1)} \leq 1$, there exists a constant $C>0$ such that $|g(X)| \leq C  x_n^{\frac{2-\gamma}{2} \alpha}$ for all $X \in \partial_p Q^+_1$ and $|u(X)| \leq C K^{\frac{2-\gamma}{2} \alpha}$ for all $X=(x',K,t) \in  Q^+_1$. Consider now the function $v$ defined in $\Omega =\{ X \in Q_1^+ : 0< x_n < K\} $ by $v(X) = u(X) -  C x_n^{\frac{2-\gamma}{2} \alpha}$. Then $v \leq 0$ on $\partial_p \Omega$ and 
\begin{equation*}
\begin{aligned}
v_t - L v & = C \left\{ \frac{(2-\gamma)\alpha}{2} \left( \frac{(2-\gamma)\alpha}{2}  - 1\right) a^{nn} x_n^{\frac{2-\gamma}{2}(\alpha-2)}  +  \frac{(2-\gamma)\alpha}{2}  b^n  x_n^{\frac{2-\gamma}{2}(\alpha-1)} + c x_n^{\frac{2-\gamma}{2} \alpha } \right \} \\
& \leq \frac{C(2-\gamma)\alpha}{2}  \left\{  \left( \frac{(2-\gamma)\alpha}{2}  - 1\right) \lambda + \Lambda x_n^{\frac{2-\gamma}{2}}  \right \} x_n^{\frac{2-\gamma}{2}(\alpha-2)}  \\
&\leq \frac{C(2-\gamma) \alpha \lambda}{4} \left( \frac{(2-\gamma)\alpha}{2}  - 1\right)  x_n^{\frac{2-\gamma}{2}(\alpha-2)}  \\
&< 0.
\end{aligned}
\end{equation*}
By \Cref{weak_max2}, we have $u(X) \leq  C x_n^{\frac{2-\gamma}{2} \alpha}$ for all $ X  \in \overline{ \Omega}$. Considering $v(x,t)=-u(x,t) + C x_n^{\frac{2-\gamma}{2} \alpha} $ similarly, we have lower bound of $u$.
\end{proof}
\begin{corollary} \label{gb_holder}
Let $0 < \alpha < \min \{\frac{2}{2-\gamma}, 1 \}$. Then for each $a^{ij},b^{i},c \in C_{\textnormal{loc}}^{\alpha}(Q_1^+)$, and $g \in C_s^{\alpha}(\overline{Q_1^+})$ satisfying $g=0$ on $\{X \in \partial_p Q_1^+ : x_n=0\}$, there exists a unique solution $u\in C^2(Q^+_1)\cap C_s^{\alpha}(\overline{Q^+_1})$ of \eqref{eq:dirich_prob} and 
$$\|u\|_{C_s^{\alpha}(\overline{Q^+_1})} \leq C  \|g\|_{C_s^{\alpha}(\overline{Q_1^+})} ,$$
where $C>0$ is a universal constant.
\end{corollary}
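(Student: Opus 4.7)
The plan is to proceed in three stages: existence and uniqueness, an $L^\infty$ bound, and the $C_s^\alpha$ estimate. For the first, I would invoke the existence lemma at the end of \Cref{sec:pre} with $f \equiv 0$ to produce a bounded classical solution $u \in C^2(Q_1^+) \cap C(\overline{Q_1^+})$, and appeal to the comparison principle \Cref{com_prin} for uniqueness. The maximum principle \Cref{apriori_bound1} then immediately yields $\|u\|_{L^\infty(Q_1^+)} \leq C \|g\|_{L^\infty(\partial_p Q_1^+)} \leq C \|g\|_{C_s^\alpha(\overline{Q_1^+})}$, which handles the $L^\infty$ contribution to the norm.

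The heart of the proof is the Hölder seminorm estimate. After rescaling so that $\|g\|_{C_s^\alpha(\overline{Q_1^+})} \leq 1$, which forces $\|u\|_{L^\infty(Q_1^+)} \leq C$, I would mirror the architecture of the proof of \Cref{bdry_holder}, with \Cref{holder_est} taking the role that \Cref{lip_est} played there. Applying \Cref{holder_est} under the current hypotheses gives the pointwise boundary decay
\[
    |u(X)| \leq C\, x_n^{(2-\gamma)\alpha/2} \qquad \text{for every } X \in \overline{Q_1^+}.
\]
This settles every pair $X_1, X_2$ with at least one endpoint $Y$ on $\{x_n = 0\}$, because $u = g = 0$ on that face and the intrinsic distance satisfies $s[X, Y]^\alpha \geq x_n^{(2-\gamma)\alpha/2}$ whenever $y_n = 0$.

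For a reference point $Y \in \overline{Q_1^+}$ with $y_n > 0$, I would rescale intrinsically as in the proof of \Cref{cond1_grt}: setting $r = y_n^{(2-\gamma)/2}$ and $v(x,t) = u(y' + rx', (rx_n)^{2/(2-\gamma)}, \tau + r^2 t)$ converts \eqref{eq:dirich_prob} into the uniformly parabolic equation \eqref{eq:unif_int} on a standard cube, with coefficient norms bounded by universal constants. Standard interior Hölder regularity for this transformed equation, combined with the bound $\|v\|_{L^\infty} \leq C r^\alpha$ extracted from the decay above, will deliver $|u(X) - u(Y)| \leq C s[X, Y]^\alpha$ for $X \in Q_{r/2}^+(Y)$. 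When instead $X \in \overline{Q_1^+}$ satisfies $s[X, Y] \geq r/2$, the triangle estimate $|u(X) - u(Y)| \leq |u(X)| + |u(Y)| \leq C(x_n^{(2-\gamma)\alpha/2} + r^\alpha)$, together with the elementary bounds $r \leq 2 s[X, Y]$ and $x_n^{(2-\gamma)/2} \leq y_n^{(2-\gamma)/2} + s[X, Y] \leq 3 s[X, Y]$, will close the estimate. The same rescaling applied at reference points near the lateral part of $\partial_p Q_1^+$ reduces to standard boundary Hölder estimates for the transformed uniformly parabolic equation with boundary datum inherited from $g$.

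The main obstacle, I expect, will be the careful bookkeeping across the three regimes (bottom boundary, interior with $s[X, Y] \leq r/2$, interior with $s[X, Y] > r/2$) to produce a single universal constant; in particular, one must verify that $\|v\|_{L^\infty(Q_\rho(0',1,0))}$ on the rescaled cube genuinely inherits the bound $C r^\alpha$ directly from the decay estimate supplied by \Cref{holder_est}, since that estimate is the only input tying $u$ to $\|g\|_{C_s^\alpha}$ rather than to $\|g\|_{L^\infty}$. Once this accounting is in place, the three cases assemble into $[u]_{C_s^\alpha(\overline{Q_1^+})} \leq C$, which together with the $L^\infty$ bound yields the stated inequality.
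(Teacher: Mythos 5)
The paper states this corollary without an explicit proof, leaving it as a consequence of \Cref{holder_est}, the existence lemma, the comparison principle \Cref{com_prin}, and the rescaling argument already carried out in \Cref{cond1_grt} and \Cref{bdry_holder}. Your proposal reconstructs exactly that chain: existence/uniqueness and the $L^\infty$ bound from the maximum principle machinery, the pointwise decay $|u(X)| \leq C x_n^{(2-\gamma)\alpha/2}$ from \Cref{holder_est}, the intrinsic rescaling to the uniformly parabolic equation \eqref{eq:unif_int} for interior reference points with $\|v\|_{L^\infty} \lesssim r^\alpha$ inherited from the decay, and the elementary triangle-inequality argument when $s[X,Y] \geq r/2$. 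The arithmetic checks out, including the observations $r < 2s[X,Y]$ and $x_n^{(2-\gamma)/2} \leq y_n^{(2-\gamma)/2} + s[X,Y]$ needed to close the far-field case, and the reduction of the bottom boundary case to $s[X,Y]^\alpha \geq x_n^{(2-\gamma)\alpha/2}$ when $y_n = 0$. The only part left somewhat schematic is the lateral boundary of $\partial_p Q_1^+$: here one must note that after rescaling the cube $Q_\rho(0',1,0)$ stays uniformly away from $\{x_n = 0\}$, so the intrinsic metric $s$ becomes comparable to the standard parabolic metric $d$ and the datum $g$ becomes $C^\alpha$ in the usual sense, after which standard boundary H\"older estimates for uniformly parabolic equations apply. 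That point is acknowledged and is genuinely routine, so the proposal is a correct and complete-in-spirit reconstruction of the argument the paper intends.
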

%
%
\section{Regularity of solutions for equations with constant coefficients} \label{sec:reg_cc}
Before treating \eqref{eq:main} with variable coefficients, we establish a necessary preliminary result for equations with constant coefficients given by
\begin{equation} \label{eq:cst_coeff}
u_t  = L_0 u + f,
\end{equation}
where the operator $L_0$ is given as
$$L_0  = A^{i'j'} D_{i'j'} + 2 x_n^{\gamma/2} A^{i'n} D_{i'n} + x_n^{\gamma} A^{nn} D_{nn} + B^{i'} D_{i'} + x_n^{\gamma/2} B^n D_{n} + C^- ,$$ 
with the constant symmetric matrix $(A^{ij})$ satisfies the following condition
\begin{equation*} 
	\lambda |\xi|^2\leq A^{ij} \xi_i\xi_j\leq \Lambda |\xi|^2\quad\mbox{for any }  \xi \in \mathbb R^n
\end{equation*}
and the constants $B^1,\cdots, B^n, C^-$ satisfy
\begin{equation*}
	\sum_{i=1}^n |B^i| +|C^- |  \leq \Lambda  \qquad \text{and } \qquad C^- < 0.
\end{equation*}
\begin{theorem} \label{thm:ccf_eq}
Let $k \in \mathbb{N}_{0}$, $0 < \alpha < 1$ with $ k+1+\alpha-\frac{2}{2-\gamma}  \not\in \mathbb{N}_0 $, and assume $ f \in C^{k,\alpha}_s(\overline{Q_1^+})$. Suppose $u \in C^2(Q_1^+) \cap C(\overline{Q_1^+})$ is a solution of \eqref{eq:cst_coeff} satisfying $u=0$ on $\{X \in \partial_p  Q_1^+ : x_n=0\}$. Then $u\in C^{k,2+\alpha}_s(\overline{Q^+_{1/2}})$ and 
\begin{equation*}
	\|u\|_{C^{k,2+\alpha}_s(\overline{Q^+_{1/2}})}\leq C \Big(\|u\|_{L^\infty(Q^+_1)}+\|f\|_{C^{k,\alpha}_s(\overline{Q^+_1})}\Big),
\end{equation*}
where $C$ is a universal constant.
\end{theorem}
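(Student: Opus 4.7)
By Theorem~\ref{thm:int_bd_gb} it suffices to construct, at each $\tilde{Y}=(\tilde{y}',0,\tilde{\tau}) \in \{X \in \partial_p Q_{1/2}^+ : x_n = 0\}$, an $s$-polynomial $p$ of degree corresponding to $\kappa = k+2+\alpha$ satisfying
\[
|u(X)-p(X)| \leq C\, s[X,\tilde{Y}]^{k+2+\alpha}, \qquad |(\partial_t - L_0)(u-p)(X)| \leq C\, s[X,\tilde{Y}]^{k+\alpha},
\]
together with $\|p\|_{C^{k+2,\alpha}_s(\overline{Q_1^+})} \leq C$, where $C$ depends on $\|u\|_{L^\infty}$ and $\|f\|_{C^{k,\alpha}_s}$. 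The corresponding interior hypothesis (1) of that theorem is supplied by Lemma~\ref{cond1_grt}, since $L_0$ is uniformly parabolic away from $\{x_n=0\}$. Translating in $x'$ and $t$, I reduce to $\tilde{Y}=O$, and by the invariance of Theorem~\ref{thm:int_bd_gb}'s hypothesis (3) under subtraction of $s$-polynomials it is enough to exhibit $p$ at this single point.

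\textbf{Algebraic construction of particular solutions.} The decisive feature of the constant-coefficient operator is that $\partial_t - L_0$ maps each $s$-monomial appearing in \eqref{def:s-pol} (including the logarithmic variants when $\gamma=1$) to an $s$-polynomial combination of monomials whose "s-degree" (the exponent $|\beta|+i+\tfrac{2j}{2-\gamma}+2l$ when $\gamma<1$ or $|\beta|+i+j+2l$ when $\gamma=1$) is lowered by exactly $2$. Expressing $(\partial_t - L_0)$ in this basis therefore yields a triangular linear operator whose diagonal entries are quadratic polynomials in the exponents; the hypothesis $k+1+\alpha-\tfrac{2}{2-\gamma} \notin \mathbb{N}_0$ is precisely what guarantees non-resonance, i.e.\ that every relevant diagonal entry is nonzero. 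Inverting the triangular system I obtain, for the $s$-polynomial Taylor approximation $q$ of $f$ at $O$, a particular $s$-polynomial $P_f$ with $(\partial_t - L_0)P_f = q$ and $\|P_f\|_{C^{k+2,\alpha}_s(\overline{Q_1^+})} \leq C\|f\|_{C^{k,\alpha}_s}$. Subtracting $P_f$ from $u$ reduces the problem to a solution $w$ of the homogeneous equation $w_t = L_0 w$ that vanishes on $\{x_n=0\}$, with forcing-term error already of the desired order $s[X,O]^{k+\alpha}$.

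\textbf{Approximation lemma and dyadic iteration.} The core of the proof is an approximation lemma: for each $\eta>0$, any homogeneous solution $w$ with $\|w\|_{L^\infty(Q_1^+)} \leq 1$ and $w = 0$ on $\{x_n=0\}$ admits an $s$-polynomial $p^*$ of degree $k+2$ at $O$, with $\|p^*\|_{C^{k+2,\alpha}_s} \leq C$, such that $|w-p^*| \leq \eta$ on $Q_{1/2}^+$. I prove this by the usual compactness-and-blow-up scheme: the global $C^\alpha_s$-bound of Corollary~\ref{bdry_holder}, combined with the $C^{1,\alpha}$-regularity established earlier in this section, provides enough equicontinuity on compact subsets to extract a subsequential limit from a hypothetical family of counterexamples; a Liouville-type statement for $W_t = L_0 W$ on the unbounded half-space (exploiting again the $s$-polynomial character of $L_0$'s kernels) then forces the limit to be itself an $s$-polynomial of degree $k+2$, contradicting the normalization. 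Rescaling this lemma at the dyadic scales $r_m = 2^{-m}$ via the intrinsic scaling $X \mapsto r_m X$ (which preserves \eqref{eq:cst_coeff}) and iteratively peeling off the best-fitting $s$-polynomials yields a telescoping sum $\sum_m p^*_m$ converging in $C^{k+2,\alpha}_s$, whose total together with $P_f$ is the required $s$-polynomial $p$.

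\textbf{Main obstacle.} The principal difficulty is the algebraic step in the middle paragraph: one must verify, uniformly in the relevant monomials, that the triangular matrix representing $\partial_t - L_0$ is invertible precisely off the exceptional set stated in the theorem, and that its inverse is bounded in the $C^{k+2,\alpha}_s$-norms. The case $\gamma=1$ requires additional care, because the $s$-polynomial basis includes the hybrid terms $\sqrt{x_n}\log x_n$ and the application of $L_0$ couples ordinary and logarithmic monomials, so the index-shift structure must be tracked on both simultaneously. Once this algebraic reduction and the Liouville characterization are in place, the dyadic iteration and the invocation of Theorem~\ref{thm:int_bd_gb} are routine.
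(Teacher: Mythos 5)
Your overall architecture (reduce via Theorem~\ref{thm:int_bd_gb} to a single boundary point, subtract a particular $s$-polynomial solution to kill the forcing term, then treat the homogeneous problem by a dyadic iteration) follows the paper's skeleton, but the homogeneous step is handled by a genuinely different mechanism, and as written it has a real gap.

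First, your approximation lemma is misstated. You assert that for each $\eta>0$, \emph{any} bounded homogeneous solution $w$ with $w=0$ on $\{x_n=0\}$ admits a degree-$(k+2)$ $s$-polynomial $p^*$ with $|w-p^*|\leq \eta$ on the fixed set $Q^+_{1/2}$. Since $s$-polynomials of degree $\leq k+2$ form a finite-dimensional space and $\eta$ is arbitrary, this would force every such $w$ to \emph{be} an $s$-polynomial, which is false. The usable form must be scale-dependent (an improvement-of-flatness statement, $\|w-p^*\|_{L^\infty(Q^+_\rho)} \leq \eta\,\rho^{k+2+\alpha}$ for some $\rho$), and even then the compactness-and-Liouville route you outline requires a Liouville theorem for $W_t=L_0W$ on the half-space with zero boundary data, classifying entire solutions of at most polynomial growth as $s$-polynomials. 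That Liouville theorem is not established in the paper, is nontrivial for this degenerate/singular operator, and you do not prove it; acknowledging it as an obstacle does not close the gap. The paper deliberately avoids compactness here: Lemma~\ref{lem:ccf_hi_zf} constructs the $s$-polynomial expansion of a homogeneous solution \emph{explicitly}, writing $v^N(X)=\sum_{i=0}^N U^i(x',t)\,x_n^{1+\frac{2-\gamma}{2}i}$ with $U^0(x',t)=u_n(x',0,t)$ (whose existence and smoothness come from the $C^{1,\alpha}$-analysis of Lemma~\ref{bdry_c1a} and Remark~\ref{rmk:bdry_hi}), determining each $U^l$ from the recurrence \eqref{form_ui}, and proving the error bound $|u-v^N|\leq Cx_n^{1+\frac{2-\gamma}{2}(N+1)}$ by induction using the integral estimates already in Lemma~\ref{bdry_c1a}.

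Second, you misattribute the role of the hypothesis $k+1+\alpha-\frac{2}{2-\gamma}\notin\mathbb{N}_0$. The "diagonal entries" $S^{lm}$ that appear when you invert the triangular system (see the formula after \eqref{fix_id_mono} in Lemma~\ref{calc_TBT}, or \eqref{form_ui} in Lemma~\ref{lem:ccf_hi_zf}) are automatically nonzero for $\gamma<1$ regardless of $\alpha$; for $\gamma=1$ invertibility is handled by the tridiagonal matrix $T^m$, again independently of $\alpha$. The non-resonance condition is actually used in Lemma~\ref{lem:ccf_hi_nzf} to choose an integer $N$ with $k+1+\alpha-\frac{2}{2-\gamma}<N<k+2+\alpha-\frac{2}{2-\gamma}$, so that the homogeneous expansion order $N+1+\frac{2}{2-\gamma}$ strictly exceeds $k+2+\alpha$ and the rescaled tails in the telescoping sum $P^l$ form a convergent geometric series (see \eqref{est_u-h-pl}). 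Without this the dyadic iteration does not converge in the claimed norm, so the hypothesis is doing quite different work than you suggest.
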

\begin{lemma}  \label{bdd_uk}
Let $0 < \alpha < \min \{\frac{2}{2-\gamma}, 1 \}$ and let $u \in C^2(Q^+_1) \cap C(\overline{Q^+_1})$ be a solution of \eqref{eq:cst_coeff} satisfying $\|u\|_{L^{\infty}(Q^+_1)}\leq 1$ and $u =0$ on $\{X \in \partial_p Q_1^+ : x_n=0\}$. If 
$$ \| f \|_{L^{\infty}(Q^+_1)} \leq 1 \qquad \mbox{and} \qquad \| f_i \|_{L^{\infty}(Q^+_1)} \leq 1 ~ (i \ne n),$$
then 
$$\|u_i\|_{C_s^{\alpha}(\overline{Q_{1/2}^+})}  \leq C , $$
where $C>0$ is a universal constant.
\end{lemma}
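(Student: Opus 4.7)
The plan is to exploit that $L_0$ has constant coefficients, so $\partial_{x_i}$ commutes with $L_0$ whenever $i \neq n$: formally, $u_i$ solves an equation of the same type with forcing $f_i$, and the boundary condition $u|_{x_n=0}=0$ propagates to $u_i|_{x_n=0}=0$ by tangentiality. Once this formal step is justified and a uniform $L^\infty$-bound on $u_i$ is in hand, \Cref{bdry_holder} applied to $u_i$ directly yields the desired $C_s^\alpha$-estimate.

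To make the differentiation rigorous up to the boundary, I would use the tangential difference quotients $w_h(X) := h^{-1}[u(X + h e_i) - u(X)]$ for small $h>0$. These satisfy $(w_h)_t - L_0 w_h = f^{i,h}$ in $Q_{1-h}^+$ with $\|f^{i,h}\|_{L^\infty} \leq \|f_i\|_{L^\infty} \leq 1$, and $w_h = 0$ on $\{x_n=0\} \cap \partial_p Q_{1-h}^+$. By interior regularity of the (uniformly parabolic) operator $L_0$ on $\{x_n > 0\}$, $w_h \to u_i$ pointwise and locally smoothly in $\{x_n > 0\}$ as $h \to 0$.

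The crux is then to establish a uniform-in-$h$ bound $\|w_h\|_{L^\infty(Q_{3/4}^+)} \leq C$. For interior points $Y=(y,\tau)$ with $y_n > 0$, I would combine the anisotropic rescaling $v(X) = u(y' + r x', (rx_n)^{2/(2-\gamma)}, \tau + r^2 t)$ with $r = y_n^{(2-\gamma)/2}$ (as in the proof of \Cref{cond1_grt}) with the boundary Lipschitz bound $|u(X)| \leq C x_n$ from \Cref{lip_est}, reducing the estimate to a classical interior $C^1$-bound on a unit domain; for $\gamma \geq 0$ this directly gives $|u_i(Y)| \leq C$. Once the uniform $L^\infty$-bound is secured, applying \Cref{bdry_holder} to $w_h$ gives $\|w_h\|_{C_s^\alpha(\overline{Q_{1/2}^+})} \leq C$ uniformly in $h$; Arzelà--Ascoli then produces a subsequence converging uniformly to a function which, by pointwise convergence in the interior, must coincide with $u_i$, completing the proof.

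The main obstacle is the uniform $L^\infty$-bound on $u_i$ (equivalently, on $w_h$) close to $\{x_n=0\}$ in the singular regime $\gamma < 0$: the naive combination of the rescaled interior estimate with the Lipschitz boundary decay yields only $|u_i(Y)| \lesssim y_n^{\gamma/2}$, which blows up as $y_n \to 0$ when $\gamma < 0$. Overcoming this requires exploiting the stronger hypothesis $\|f_i\|_{L^\infty} \leq 1$ rather than just $\|f\|_{L^\infty} \leq 1$: since $w_h$ itself satisfies a PDE of the form \eqref{eq:cst_coeff} with bounded RHS and vanishes on $\{x_n=0\}$, \Cref{lip_est} applied to a suitable normalization of $w_h$ (together with an interior bound on $w_h$ along a collar $\{x_n = \varepsilon\}$) should close the estimate uniformly in $h$, though the interplay with the singular weight $x_n^\gamma$ for $\gamma < 0$ is the most delicate part and may require a bootstrap or Bernstein-type comparison argument.
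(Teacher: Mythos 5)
Your overall plan — differentiate tangentially, observe that $w_h$ solves the same equation with bounded right-hand side and vanishes on $\{x_n=0\}$, then apply \Cref{bdry_holder} to the quotient — is sound in outline, and you have correctly identified the crux: a uniform-in-$h$ bound $\|w_h\|_{L^\infty}\leq C$. But the proposal does not actually close that bound. As you yourself compute, scaling plus the boundary Lipschitz decay from \Cref{lip_est} only gives $|u_i(Y)|\lesssim y_n^{\gamma/2}$, which diverges as $y_n\to 0$ when $\gamma<0$. Your proposed remedy — applying \Cref{lip_est} to ``a suitable normalization of $w_h$'' together with a collar bound on $\{x_n=\varepsilon\}$ — is circular: \Cref{lip_est} takes a global $L^\infty$ bound on the solution as a hypothesis, so it cannot be used to produce the $L^\infty$ bound on $w_h$ that you need, and the collar bound itself still deteriorates like $\varepsilon^{\gamma/2}$ when $\gamma<0$. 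So the argument as written has a genuine gap: you never establish a uniform bound on the first-order difference quotient.

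The paper bypasses the $L^\infty$ bound on $u_i$ entirely by an iteration over \emph{fractional} difference quotients. Since $u\in C_s^\alpha(\overline{Q^+_{1-|h|}})$ by \Cref{bdry_holder}, the $\alpha$-order quotient
\[
v^\alpha(X)=\frac{u(x_1+h,x_2,\dots,x_n,t)-u(X)}{|h|^\alpha}
\]
is bounded (uniformly in $h$), solves $v^\alpha_t=L_0 v^\alpha+f^\alpha$ with $\|f^\alpha\|_{L^\infty}\le\|f_1\|_{L^\infty}\le 1$ (mean value theorem, using $|h|^{1-\alpha}\le 1$), and vanishes on $\{x_n=0\}$. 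Applying \Cref{bdry_holder} to $v^\alpha$ gives $\|v^\alpha\|_{C_s^\alpha(\overline{Q^+_{1-2|h|}})}\le C$ uniformly in $h$, which (by a Zygmund-type argument on second differences) upgrades $u$ to $C^{2\alpha}$ in the $x_1$-variable. Iterating — each pass gains $\alpha$ in the $x_1$-regularity and costs only $\|f_1\|_{L^\infty}$ on the right-hand side — one reaches $C^{0,1}$ in finitely many steps; only then is the first-order quotient $v^1$ bounded, and one final application of \Cref{bdry_holder} to $v^1$ gives $u_1\in C_s^\alpha$ with the claimed bound. This is the mechanism you were missing: rather than proving the Lipschitz bound directly (which fails for $\gamma<0$ by a naive scaling argument), the iteration never requires more boundedness than what is already available at the current Hölder scale.
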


\begin{proof}
It is sufficient to prove only the H\"older continuity of $u_1$ in $\overline{Q_{1/2}^+}$. Consider $\tilde{u}(x_1) $ as a function whose variables are fixed except for the $x_1$-coordinate in the function $u$. By \Cref{bdry_holder}, for $h \in \mathbb{R}$ with $|h| \ll 1 $, we have  $\|u\|_{C_s^{\alpha}(\overline{Q^+_{1-|h|}})} \leq C  (\|u\|_{L^{\infty}(Q_1^+)} + \|f\|_{L^{\infty}(Q_1^+)} )$. This implies that $\tilde{u} \in C^{\alpha} (I_h)$ and the function
\begin{equation*}
v^{\alpha}(x,t) = \frac{u(x_1 + h,x_2,\cdots, x_n, t) - u(x,t)}{|h|^{\alpha}} 
\end{equation*}
is a bounded solution of  $v_t^{\alpha} = L_0 v^{\alpha} + f^{\alpha}$, where 
$$f^{\alpha}(x,t) = \frac{f(x_1 + h,x_2,\cdots, x_n, t) - f(x,t)}{|h|^{\alpha}} 
\qquad \text{and}  \qquad 
I_h = \begin{cases}
	[-1, 1- h] & (h > 0) \\
	[-1-h, 1] & (h < 0) .
\end{cases}$$
We can apply \Cref{bdry_holder} to $v^{\alpha}$ again,
\begin{align*}
	\|v^{\alpha}\|_{C_s^{\alpha}(\overline{Q^+_{1-2|h|}})} &\leq C \left(\| v^{\alpha} \|_{L^{\infty}(Q_{1-|h|}^+)} + \|f^{\alpha}\|_{L^{\infty}(Q_{1-|h|}^+)}\right) \\
	& \leq  C \left(\| u \|_{C_s^{\alpha}(\overline{ Q_{1-|h|}^+})} + \|f_1\|_{L^{\infty}(Q_1^+)} \right) \\
	& \leq  C \left( \|u\|_{L^{\infty}(Q_1^+)} + \|f \|_{L^{\infty}(Q_1^+)}  + \|f_1\|_{L^{\infty}(Q_1^+)} \right). 
\end{align*}
This implies that $\tilde{u} \in C^{2\alpha}( I_{2h} )$. We can repeat this process, we have $\tilde{u} \in C^{0,1}(I_{kh})$ for some $k \in \mathbb{N}$. Finally, $v^1$ is also a bounded solution of $v_t^{1} = L_0 v^{1} + f^{1}$ and $v^1$ is the difference quotient of $u$ for $x_1$-direction, we conclude that $\tilde{u} \in C^{1,\alpha}(I_{(k+1)h})$ and 
$$\| u_1 \|_{C_s^{\alpha}(\overline{Q^+_{1/2}})} \leq C \left( \|u\|_{L^{\infty}(Q_1^+)} + \|f \|_{L^{\infty}(Q_1^+)}  + \|f_1\|_{L^{\infty}(Q_1^+)} \right).$$
\end{proof}
Since $v =u_i \, (i  \ne n)$ satisfies $v_t = L_0 v + f_i$, we can apply \Cref{bdd_uk} again to obtain the H\"older continuity of $u_{ij}$ on $ \overline{Q^+_{1/2}} $.
\begin{corollary}  \label{bdd_uij}
Let $0 < \alpha < \min \{\frac{2}{2-\gamma}, 1 \}$ and let $u \in C^2 (Q^+_1) \cap C(\overline{Q^+_1})$ be a solution of \eqref{eq:cst_coeff} satisfying $\|u\|_{L^{\infty}(Q^+_1)}\leq 1$ and $u =0$ on $\{X \in \partial_p Q_1^+ : x_n=0\}$. If for each $i,j=1,2,\cdots,n-1$,
$$ \| f \|_{L^{\infty}(Q^+_1)} \leq 1, \qquad  \| f_{i} \|_{L^{\infty}(Q^+_1)} \leq 1, \qquad \text{and} \qquad \| f_{ij} \|_{L^{\infty}(Q^+_1)} \leq 1 ,$$
then 
$$\|u_{ij}\|_{C_s^{\alpha}(\overline{ Q^+_{1/2} })}  \leq C , $$
where $C>0$ is a universal constant.
\end{corollary}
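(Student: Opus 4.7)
Since the coefficients of $L_0$ are all constants, and the only $x$-dependence on the right-hand side of the equation enters through the powers $x_n^\gamma$ and $x_n^{\gamma/2}$, differentiating in any tangential direction $x_i$ with $i\ne n$ commutes with $L_0$. Consequently, for each $i\in\{1,\ldots,n-1\}$ the function $v:=u_i$ satisfies
$$v_t = L_0 v + f_i \quad \text{in } Q_1^+,$$
and moreover vanishes on $\{X\in\partial_p Q_1^+:x_n=0\}$, since $u$ does and $\partial_{x_i}$ is tangential along that face. This reduces the corollary to a second application of \Cref{bdd_uk}.

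The plan is therefore: first apply \Cref{bdd_uk} to $u$ itself, using the hypotheses $\|u\|_{L^\infty(Q_1^+)}\le 1$, $\|f\|_{L^\infty}\le 1$, and $\|f_i\|_{L^\infty}\le 1$, to obtain $\|u_i\|_{C_s^\alpha(\overline{Q_{1/2}^+})}\le C$, which in particular gives a uniform $L^\infty$ bound on $v$. Then apply \Cref{bdd_uk} a second time to $v$, invoking the hypotheses $\|f_i\|_{L^\infty}\le 1$ and $\|(f_i)_j\|_{L^\infty}=\|f_{ij}\|_{L^\infty}\le 1$ for $j\ne n$, to conclude $\|v_j\|_{C_s^\alpha}=\|u_{ij}\|_{C_s^\alpha}\le C$ for all $i,j\in\{1,\ldots,n-1\}$.

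The one bookkeeping subtlety is that \Cref{bdd_uk} takes data on $Q_1^+$ and delivers its conclusion on $\overline{Q_{1/2}^+}$, whereas after the first step we only control $v$ on $Q_{1/2}^+$ rather than on $Q_1^+$. I would handle this by an intrinsic rescaling $X\mapsto rX=(rx',r^{2/(2-\gamma)}x_n,r^2 t)$ with a suitable $r\le 1/2$, which preserves the principal part of $L_0$ and only shrinks the coefficients of the lower-order terms, so the hypotheses of \Cref{bdd_uk} persist under this rescaling; transferring the rescaled conclusion back then gives $\|u_{ij}\|_{C_s^\alpha(\overline{Q_{1/2}^+})}\le C$. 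I expect no other obstacles, since the tangential-differentiation step is immediate from the constant-coefficient structure; the scaling reconciliation is the only nontrivial piece.
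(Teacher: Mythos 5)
Your approach is exactly the paper's: since $L_0$ has constant coefficients and the only $x$-dependence is through $x_n^{\gamma/2}$ and $x_n^\gamma$, tangential differentiation commutes with $L_0$, so $v=u_i$ solves $v_t=L_0 v+f_i$ with $v=0$ on $\{x_n=0\}$, and one applies \Cref{bdd_uk} to $v$. The paper states this in a one-line remark immediately before the corollary.

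One small point on the domain bookkeeping you raise: the rescaling $X\mapsto rX$ with $r=1/2$ maps $Q_{1/2}^+$ onto $Q_1^+$, so a second application of \Cref{bdd_uk} in the rescaled picture lands back on the rescaled $Q_{1/2}^+$, which is the original $Q_{1/4}^+$, not $Q_{1/2}^+$. The cleaner fix (implicit in the paper) is to observe that the difference-quotient argument in the proof of \Cref{bdd_uk} loses only $(k+1)|h|$ of the domain, with $|h|$ arbitrarily small, so the lemma actually runs from $Q_1^+$ to $Q_\rho^+$ for any $\rho<1$; apply it first with $\rho=3/4$ to bound $u_i$ on $Q_{3/4}^+$, then from $Q_{3/4}^+$ to $Q_{1/2}^+$ for $v=u_i$. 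This preserves the stated radius $1/2$ without rescaling. Either way the substance of your argument is correct and coincides with the paper's.
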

\begin{lemma}  \label{bdd_ut}
Let $0 < \alpha < \min \{\frac{2}{2-\gamma}, 1 \}$ and let  $u \in C^2 (Q^+_1) \cap C(\overline{Q^+_1})$ be a solution of \eqref{eq:cst_coeff} satisfying $\|u\|_{L^{\infty}(Q^+_1)}\leq 1$ and $u =0$ on $\{X \in \partial_p Q_1^+ : x_n=0\}$. If 
$$ \| f \|_{L^{\infty}(Q^+_1)} \leq 1 \qquad \text{and} \qquad \| f_t \|_{L^{\infty}(Q^+_1)} \leq 1,$$
then 
$$\|u_t\|_{C_s^{\alpha}(Q^+_{1/2})}  \leq C , $$
where $C>0$ is a universal constant.
\end{lemma}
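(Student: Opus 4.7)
The plan is to mirror the proof of \Cref{bdd_uk}, but with the difference direction taken to be $t$ (in parabolic scaling) instead of $x_1$. Two structural features make this work: the operator $L_0$ has constant coefficients, so it is invariant under time translations; and the boundary condition $u \equiv 0$ on $\{x_n = 0\}$ is preserved by time shifts, so every time-difference quotient of $u$ also vanishes there.

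For $0 < h \ll 1$ and $\omega \in (0, 1]$, I would set
\begin{equation*}
v_h^\omega(x, t) := \frac{u(x, t + h) - u(x, t)}{h^\omega}
\end{equation*}
on the region where both $(x, t)$ and $(x, t + h)$ lie in $Q_1^+$. Since $L_0$ does not depend on $t$, $v_h^\omega$ solves $\partial_t v_h^\omega = L_0 v_h^\omega + f_h^\omega$ with $f_h^\omega(x, t) = [f(x, t + h) - f(x, t)]/h^\omega$ and $v_h^\omega = 0$ on $\{x_n = 0\}$. The hypothesis $\|f_t\|_{L^\infty} \le 1$ yields $\|f_h^\omega\|_{L^\infty} \le h^{1 - \omega} \le 1$ for $\omega \le 1$.

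First, \Cref{bdry_holder} gives $\|u\|_{C_s^\alpha(\overline{Q_r^+})} \le C$ on a domain slightly larger than $Q_{1/2}^+$, so in particular $\|v_h^{\alpha/2}\|_{L^\infty} \le C$ uniformly in $h$. Applying \Cref{bdry_holder} to $v_h^{\alpha/2}$ yields $\|v_h^{\alpha/2}\|_{C_s^\alpha} \le C$ on a slightly shrunken domain, and specializing its $t$-Hölder continuity to step $h$ gives
\begin{equation*}
|u(x, t + 2h) - 2 u(x, t + h) + u(x, t)| \le C h^\alpha .
\end{equation*}
By the classical Zygmund argument applied pointwise in $x$, this second-difference bound upgrades $u$ to Hölder continuity of order $\alpha$ in $t$ (in the $h$-scale). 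I would then iterate: each pass raises the time-Hölder exponent of $u$ (in the $h$-scale) by $\alpha/2$, and after at most $\lceil 2/\alpha\rceil$ iterations the exponent exceeds $1$; at that point, Zygmund's theorem guarantees that $u_t$ exists up to $\{x_n = 0\}$ and that $v_h^1 = [u(x, t + h) - u(x, t)]/h$ is uniformly bounded in $h$.

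Finally, $v_h^1$ solves $\partial_t v_h^1 = L_0 v_h^1 + f_h^1$ with $\|f_h^1\|_{L^\infty} \le \|f_t\|_{L^\infty} \le 1$ and vanishes on $\{x_n = 0\}$, so \Cref{bdry_holder} applied to $v_h^1$ yields $\|v_h^1\|_{C_s^\alpha(\overline{Q_{1/2}^+})} \le C$ independently of $h$. Letting $h \to 0^+$ along a subsequence via Arzelà--Ascoli produces $u_t \in C_s^\alpha(\overline{Q_{1/2}^+})$ with the claimed bound. The main technical hurdle is the Zygmund-type transfer from iterated second-difference bounds to first-difference/derivative control, combined with tracking the finite domain shrinkage across iterations; this is classical but requires care as the Hölder exponent crosses the integer value $1$.
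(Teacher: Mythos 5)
Your proposal takes essentially the same route as the paper: the paper's entire proof of \Cref{bdd_ut} is to introduce the time--difference quotient $w^{\alpha}(x,t)=\bigl[u(x,t+h)-u(x,t)\bigr]/|h|^{\alpha/2}$ and observe that the argument is then "exactly the same as \Cref{bdd_uk}", which is precisely the iterated difference-quotient bootstrap via \Cref{bdry_holder} that you spell out. The Zygmund-type transfer from second-difference to first-difference bounds, and the caveat as the exponent crosses $1$, are equally implicit in the paper's proof of \Cref{bdd_uk} (which simply asserts $\tilde u \in C^{2\alpha}$ and then $\tilde u \in C^{0,1}$), so your more explicit account introduces no deviation in substance.
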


\begin{proof}
Considering the function
\begin{equation*}
w^{\alpha}(x,t) = \frac{u(x, t+h) - u(x,t)}{|h|^{\alpha/2}}, 
\end{equation*}
we know that the proof is exactly the same as \Cref{bdd_uk}.
\end{proof}
\begin{lemma}  \label{bdry_c1a}
Let $u \in C^{\infty}(Q^+_1) \cap C(\overline{Q^+_1})$ be a solution of \eqref{eq:cst_coeff} satisfying $\|u\|_{L^{\infty}(Q^+_1)}\leq 1$ and $u =0$ on $\{X \in \partial_p Q_1^+ : x_n=0\}$. If for each $i,j,k=1,2,\cdots,n-1$,
$$\begin{aligned}
\| f \|_{L^{\infty}(Q^+_1)} &\leq 1, \\
\| f_{ijk} \|_{L^{\infty}(Q^+_1)} &\leq 1,
\end{aligned}\qquad 
\begin{aligned}
\| f_i \|_{L^{\infty}(Q^+_1)} &\leq 1, \\
\| f_t \|_{L^{\infty}(Q^+_1)} &\leq 1,
\end{aligned} \qquad 
\begin{aligned}
\| f_{ij} \|_{L^{\infty}(Q^+_1)} &\leq 1, \\
\| f_{kt} \|_{L^{\infty}(Q^+_1)} &\leq 1,
\end{aligned}
$$
and there exist constants $\theta \ge \gamma/2$ and $M  > 0$ such that
$$\left\{\begin{aligned}
|f(X)| &\leq M x_n^{\theta} \quad \mbox{for all } X \in \overline{Q^+_1} \\
|f_k(X)| &\leq M x_n^{\theta} \quad \mbox{for all } X \in \overline{Q^+_1}, \\
\end{aligned}\right.$$
then $u_n$ is well-defined on $\overline{Q^+_{1/2}}$ and
$$|u_n(x',x_n,t) - u_n(x',y_n,t)  |  \leq \left\{\begin{aligned} 
& C|x_n - y_n|^{1-\gamma/2}  && (0< \gamma \leq 1) \\
& C|x_n - y_n|  && (\gamma \leq 0)
\end{aligned}\right.$$
for all $(x',x_n,t),(x',y_n,t) \in \overline{Q^+_{1/2}}$, where $C>0$ is a constant depending only on $n$, $\lambda$, $\Lambda$, $\gamma$, $\theta$, and $M$.
\end{lemma}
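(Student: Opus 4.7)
The plan is to freeze $(x', t)$ and reduce the PDE to a first-order linear ODE in $\sigma = x_n$ for $v(\sigma) := u_n(x', \sigma, t)$. The first step is to establish tangential and time regularity of $u$ up to the order permitted by the hypotheses on $f$. Iterating \Cref{bdd_uk}, \Cref{bdd_uij}, and \Cref{bdd_ut} on $u$ and on its tangential/time derivatives---each of which again solves an equation of the same form with the corresponding tangential or time derivative of $f$ on the right, legal thanks to the hypotheses $\|f_{ij}\|, \|f_{ijk}\|, \|f_{kt}\| \leq 1$---yields $C_s^{\alpha}$-bounds on $u$, $u_{i'}$, $u_{i'j'}$, $u_{i'j'k'}$, $u_t$, $u_{i't}$. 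Since $u \equiv 0$ on $\{x_n = 0\}$, all these derivatives vanish there. Applying \Cref{lip_est} to each---using $\delta = \max\{0, -\theta\}$ (which satisfies $\delta \leq \min\{1-\gamma/2,\,1-\gamma\}$ by $\theta \geq \gamma/2$ and $\gamma \leq 1$) for $u$ and $u_{i'}$, whose sources decay like $x_n^\theta$, and $\delta = 0$ for the remaining ones, whose sources are bounded---yields the boundary Lipschitz-type estimate
\begin{equation*}
|u|,\ |u_{i'}|,\ |u_{i'j'}|,\ |u_t|,\ |u_{i'j'k'}|,\ |u_{i't}| \leq C\, x_n \qquad \text{on } \overline{Q^+_{3/4}},
\end{equation*}
with the $-Cx_n \log x_n$ replacement in the borderline case $\gamma = 1$ (where $\delta = 0$).

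Next, isolating the normal-derivative terms in the equation, define
\begin{equation*}
G(x', \sigma, t) := u_t - A^{i'j'} u_{i'j'} - 2\sigma^{\gamma/2} A^{i'n} u_{i'n} - B^{i'} u_{i'} - C^- u - f,
\end{equation*}
so that the PDE becomes
\begin{equation*}
A^{nn} \sigma^\gamma\, v'(\sigma) + B^n \sigma^{\gamma/2}\, v(\sigma) = G(x', \sigma, t).
\end{equation*}
By the previous step and the hypothesis $|f| \leq M x_n^\theta \leq M x_n^{\gamma/2}$, every term in $G$ except the cross term $2\sigma^{\gamma/2} A^{i'n} u_{i'n}$ is $O(\sigma^{\gamma/2})$. \textbf{The main obstacle} is uniform control of $u_{i'n}$ up to $\{x_n = 0\}$. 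I resolve it by a one-step bootstrap: the function $u_{i'}$ solves the same PDE with source $f_{i'}$, which satisfies $|f_{i'}| \leq M x_n^\theta$ and whose tangential/time derivatives $f_{i'j}, f_{i'jk}, f_{i't}$ are bounded by the lemma's hypotheses. Running the (easier half of the) ODE analysis below on $u_{i'}$---extracting only the \emph{existence and boundedness} of $(u_{i'})_n = u_{i'n}$ on $\overline{Q^+_{1/2}}$, not the Hölder estimate---gives $|u_{i'n}| \leq C$, and hence $|G(x', \sigma, t)| \leq C \sigma^{\gamma/2}$.

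Finally, multiply the ODE by the integrating factor
\begin{equation*}
\mu(\sigma) := \exp\!\left(\frac{B^n}{A^{nn}(1 - \gamma/2)}\, \sigma^{1 - \gamma/2}\right),
\end{equation*}
which is smooth, uniformly comparable to $1$, and satisfies $\mu(0^+) = 1$. The resulting identity $(\mu v)'(\sigma) = \mu(\sigma)\, G(\sigma)/(A^{nn} \sigma^\gamma)$ has right-hand side integrable near $\sigma = 0$, since $|G(\sigma)/\sigma^\gamma| \leq C \sigma^{-\gamma/2}$ and $\gamma/2 < 1$. Integrating from any fixed $s_0 \in (0, 1/2]$ shows that $\lim_{\sigma \to 0^+} v(\sigma)$ exists and that $v$ is uniformly bounded on $(0, 1/2]$; thus $u_n$ extends to a well-defined function on $\overline{Q^+_{1/2}}$. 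Plugging this uniform bound back into the ODE yields $|v'(\sigma)| \leq C \sigma^{-\gamma/2}$ for $0 < \gamma \leq 1$ and $|v'(\sigma)| \leq C$ for $\gamma \leq 0$. Integrating $v'$ from $y_n$ to $x_n$ and using the elementary inequality $x_n^{1-\gamma/2} - y_n^{1-\gamma/2} \leq (x_n - y_n)^{1 - \gamma/2}$, valid since $1 - \gamma/2 \in (0, 1]$, completes the proof.
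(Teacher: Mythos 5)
Your reduction to a first-order ODE in $\sigma = x_n$ via the integrating factor $\mu$ is clean, and the final integration to get the H\"older exponent $1-\gamma/2$ is correct; but the claimed ``one-step bootstrap'' that is supposed to give $|u_{i'n}| \leq C$ does not terminate. To bound $v = u_n$ via $(\mu v)' = \mu G/(A^{nn}\sigma^\gamma)$ you need $|G|\leq C\sigma^{\gamma/2}$, and the obstruction is the cross term $2\sigma^{\gamma/2}A^{i'n}u_{i'n}$, so you propose to run the same ODE argument on $u_{i'}$. But that argument in turn requires a pointwise bound on $G_{i'}$, whose own cross term $2\sigma^{\gamma/2}A^{j'n}u_{i'j'n}$ demands $|u_{i'j'n}|\leq C$, which would need $|u_{i'j'k'n}|\leq C$, and so on; the regression never closes, and the hypotheses only give you tangential/time derivatives of $f$ up to third order. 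So as written the step ``gives $|u_{i'n}|\leq C$'' is unjustified.

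The paper closes this loop without any a priori bound on $u_{i'n}$ by \emph{integrating the cross term by parts in $x_n$}: after dividing the PDE by $x_n^\gamma A^{nn}$ and integrating from $0$ to $z$, it writes
$\int_0^z \sigma^{-\gamma/2}u_{i'n}\,d\sigma = \bigl[\sigma^{-\gamma/2}u_{i'}\bigr]_0^z + \tfrac{\gamma}{2}\int_0^z \sigma^{-\gamma/2-1}u_{i'}\,d\sigma$,
and then uses the already-established Lipschitz bound $|u_{i'}|\leq C\sigma$ to control both terms by $Cz^{1-\gamma/2}$. This replaces the unknown $u_{i'n}$ by the known $u_{i'}$, so the boundedness of $u_n$ is obtained first without circularity; only \emph{then} does one apply that boundedness argument to $u_k$ (using the third-order tangential bounds on $f$) to get $|u_{kn}|\leq C$, which feeds back into the H\"older estimate for $u_n$. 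Your integrating-factor formulation can be salvaged by performing exactly this integration by parts on the term $\int \mu\,\sigma^{-\gamma/2}u_{i'n}\,d\sigma$ arising from $\int_z^{s_0}\mu G/(A^{nn}\sigma^\gamma)\,d\sigma$, rather than trying to bound $u_{i'n}$ pointwise first. Without that step, the proof has a genuine gap.
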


\begin{proof}
Differentiate both sides of \eqref{eq:cst_coeff} with respect to $x_k\,(k \ne n)$, then 
\begin{equation}
\left\{\begin{aligned}\label{eq:uk}
v_t & = L_0  v + f_k  && \mbox{in } Q^+_1 \\
v & = 0 && \mbox{on } \{X \in \partial_p  Q_1^+ : x_n=0\},
\end{aligned}\right.
\end{equation}
where $v = u_k$. By \Cref{bdd_uk}, $v$ is bounded solution of \eqref{eq:uk}, we can apply \Cref{lip_est} to $v$, we have
\begin{equation*}
	|u_k(X)| = |v(X)| \leq  
	\begin{cases} 
		C x_n & (\gamma <1 ) \\ 
		- C x_n \log x_n  & (\gamma = 1) 
	\end{cases}
\end{equation*}
for all  $X \in \overline{Q^+_{1/2}}$. We can obtain the same inequality not only for $u_k$ but also for $u_t$ and $u_{ij} \,(i,j=1,2,\cdots,n-1)$. An integration by parts yields
\begin{align} 
\left| \int_0^z x_n^{-\gamma/2} u_{i'n}(x,t) \,dx_n \right| &= \left| \bigg[  x_n^{-\gamma/2} u_{i'}(x,t) \bigg]_{x_n=0}^{x_n=z} + \frac{\gamma}{2} \int_0^z x_n^{-\gamma/2-1} u_{i'}(x,t) \, dx_n \right|  \nonumber \\
&\leq \left| z^{-\gamma/2} u_{i'}(x',z,t) \right| + \frac{|\gamma|}{2} \int_0^z | x_n^{-\gamma/2-1} u_{i'} (x,t) | \, dx_n \label{ieq1:u_in} \\
&\leq  \left\{\begin{aligned} 
& C z^{1-\gamma/2}  && (\gamma <1 ) \\ 
& -C \sqrt{z} \log z  && (\gamma = 1) 
\end{aligned}\right. \label{ieq2:u_in}
\end{align}
for all $(x',z,t) \in \overline{Q^+_{1/2}}$. Similarly, we also have
\begin{align} 
\left| \int_0^z x_n^{-\gamma/2} u_n(x,t) \,dx_n \right| \leq  \left\{\begin{aligned} 
& C z^{1-\gamma/2}  && (\gamma <1 ) \\ 
& -C \sqrt{z} \log z  && (\gamma = 1) 
\end{aligned}\right. \label{ieq2:u_n}
\end{align}
for all $(x',z,t) \in \overline{Q^+_{1/2}}$. Combining \eqref{eq:cst_coeff}, \eqref{ieq2:u_in}, and \eqref{ieq2:u_n} gives
\begin{align}
	\left| \int_0^z u_{nn}\, dx_n \right|  &= \left| \int_0^z \frac{u_t   - A^{i'j'} u_{i'j'} - 2 x_n^{\gamma/2} A^{i'n} u_{i'n} -  B^{i'} u_{i'} - x_n^{\gamma/2} B^n u_{n} - C^- u - f}{ x_n^{\gamma} A^{nn} }  \,dx_n \right|  \label{ieq:int_u_nn}   \\
	& \leq  \left\{\begin{aligned} 
		& C z^{1-\gamma/2}  && (\gamma <1 ) \\ 
		& -C \sqrt{z} \log z && (\gamma = 1) 
	\end{aligned}\right. \nonumber 
\end{align}
for all $(x',z,t) \in \overline{Q^+_{1/2}}$ and hence for $\varepsilon < 2^{-\frac{2}{2-\gamma}}$,
\begin{align*}
|u_n(x',\varepsilon, t)| 
&= \left| u_n(x',2^{-\frac{2}{2-\gamma}},t) - \int_{0}^{2^{-\frac{2}{2-\gamma}}} u_{nn}(x,t)\, dx_n +  \int_{0}^{\varepsilon} u_{nn}(x,t)\, dx_n \right| \\
&\leq \left\{\begin{aligned} 
& |u_n(x',2^{-\frac{2}{2-\gamma}},t) | + C (2^{-1} + \varepsilon^{1-\gamma/2})  && (\gamma <1 ) \\ 
&  |u_n(x',1/4,t) |  + C ( \log 2  - \sqrt{\varepsilon} \log \varepsilon )&& (\gamma = 1) .
\end{aligned}\right. 
\end{align*}
By interior gradient estimates for uniformly parabolic equations, $u_n$ is bounded by a universal constant on $\{x_n = 2^{-\frac{2}{2-\gamma}}\}$. Therefore, we have $|u_n| \leq C$ in  $Q^+_{1/2}.$
This implies that even though $\gamma = 1$, we have the following Lipschitz estimate
\begin{equation} \label{lip_est_all} 
|u(x,t)| \leq C x_n \quad \mbox{on }  \overline{Q^+_{1/2}}.
\end{equation}
Furthermore $v=u_k \, (k\neq n)$ satisfies assumptions of \Cref{bdry_c1a} again, we have $|u_{kn}| \leq C $ in $Q^+_{1/2}.$ Thus, \eqref{ieq:int_u_nn} gives
\begin{align}\label{eq:un_holder}
	| u_{n}(x',x_n,t) -  u_{n}(x',y_n,t) | &= \left| \int_{y_n}^{x_n}  u_{nn}(x',z,t) \,dz \right| \nonumber \\
	&\leq C \left( \left| \int_{y_n}^{x_n} z^{1-\gamma} \,dz \right| + \left| \int_{y_n}^{x_n} z^{-\gamma/2} \,dz \right
| +  \left| \int_{y_n}^{x_n} z^{\theta-\gamma} \,dz \right
| \right) \nonumber \\
	& =  C( |x_n^{2-\gamma}- y_n^{2-\gamma}| + |x_n^{1-\gamma/2}- y_n^{1-\gamma/2}| +  |x_n^{\theta -\gamma + 1}- y_n^{\theta-\gamma+1}|) \\
	& \leq \begin{cases} 
		C |x_n - y_n|^{1-\gamma/2} & (0 < \gamma \leq 1 ) \\ 
		C |x_n - y_n|   & (\gamma \leq 0) 
	\end{cases} \label{eq:un_hol}
\end{align}
for all $x_n, y_n \in (0,2^{-\frac{2}{2-\gamma}})$. 

In order to extend \eqref{eq:un_hol} to $\overline{Q_{1/2}^+}$, we define the function $u_n^*:[0,2^{-\frac{2}{2-\gamma}}] \to \mathbb{R}$ as in \cite{McS34}:
$$u_n^*(x_n) = \begin{cases} 
\displaystyle \inf_{0< y_n < 2^{-\frac{2}{2-\gamma}}} \{ u_n (x',y_n,t) + C |x_n - y_n|^{1-\gamma/2} \} & (0 <\gamma \leq 1) \\
\displaystyle \inf_{0< y_n < 2^{-\frac{2}{2-\gamma}}} \{ u_n (x',y_n,t) + C |x_n - y_n|  \}   & (\gamma \leq 0).
\end{cases}$$
Then, $u_n^*$ is uniformly continuous on $[0,2^{-\frac{2}{2-\gamma}}]$ such that 
$$|u_n^*(x_n) - u_n^*(y_n)| \leq \begin{cases} 
C |x_n - y_n|^{1-\gamma/2} & (0 < \gamma \leq 1)  \\
C |x_n - y_n| & (\gamma \leq 0) 
\end{cases}$$
for all $x_n,y_n \in [0,2^{-\frac{2}{2-\gamma}}]$ and $u_n^*(x_n) = u_n(x',x_n,t) $ for all $x_n \in (0,2^{-\frac{2}{2-\gamma}})$. Therefore, the value $u_n(x',0,t)$ can be defined as $u_n^*(0)$ and we conclude that 
$$|u_n(x',x_n,t) - u_n(x',y_n,t)  |  \leq \left\{\begin{aligned} 
& C|x_n - y_n|^{1-\gamma/2}  && (0< \gamma \leq 1) \\
& C|x_n - y_n|  && (\gamma \leq 0)
\end{aligned}\right.$$
for all $(x',x_n,t),(x',y_n,t) \in \overline{Q^+_{1/2}}$.
\end{proof}

\begin{remark} \label{rmk:bdry_hi}
In \Cref{bdry_c1a}, we can verify that the proof can continue for $u_k\,(k \ne n)$, $u_t$, or higher order differentiation of $u$ except for $x_n$-direction. Therefore, on the same assumption of \Cref{bdry_c1a} for forcing term $D_{(x',t)}^{\beta} f(X) $ for each $ \beta \in \mathbb{N}_{0}^n$, the partial derivative $D_{(x',t)}^{\beta} u_n(X)$ is well-defined on $\overline{Q^+_{1/2}}$ and  
\begin{equation*}
 |D_{(x',t)}^{\beta} u(X)| \leq Cx_n \qquad \text{and} \qquad |D_{(x',t)}^{\beta} u_n(X)| \leq C
\end{equation*}
for all $X \in \overline{Q^+_{1/2}}$, where $C>0$ is a constant depending only on $n$, $\lambda$, $\Lambda$, $\gamma$, $\theta$, and $M$. 
\end{remark}
\begin{lemma} \label{lem:ccf_hi_zf}
Let $u \in C^{\infty}(Q^+_1) \cap C(\overline{Q^+_1})$ be a solution of \eqref{eq:cst_coeff} with $f=0$. If $\|u\|_{L^\infty(Q^+_1)}\leq 1$ and $u =0$ on $\{X \in \partial_p  Q_1^+ :x_n=0\}$, then for each $N \in \mathbb{N}_0$, there exists an $s$-polynomial $p$ of degree $\big( N +\frac{2}{2-\gamma}\big) $ at $O$ such that
\begin{equation*}
	|(p_t -L_0 p) (X)|\leq C x_n^{1+\frac{2-\gamma}{2}(N-1)} \quad \mbox{for all } X \in \overline{Q^+_{1/2}}
\end{equation*}
and
\begin{equation*}
	|u(X)-p(X)|\leq Cs[X,O]^{N+1+\frac{2}{2-\gamma}} \quad \mbox{for all } X \in \overline{Q^+_1},
\end{equation*}
where $C>0$  is a constant depending only on $n$, $\lambda$, $\Lambda$, $\gamma$, and $N$.
\end{lemma}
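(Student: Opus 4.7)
The plan is to argue by induction on $N$. For the base case $N=0$, since $u$ vanishes on $\{X \in \partial_p Q^+_1 : x_n = 0\}$, every tangential derivative $D^\beta_{x'} \partial_t^l u$ vanishes there, so among the $s$-monomials of degree at most $\frac{2}{2-\gamma}$ in \eqref{def:s-pol} at $Y=O$ only $x_n$ can carry a nontrivial coefficient; by \Cref{rmk:bdry_hi} that coefficient must equal $u_n(O)$, a finite quantity. Setting $p(X) := u_n(O)\, x_n$ one computes $p_t - L_0 p = -u_n(O)(x_n^{\gamma/2} B^n + C^- x_n)$, so $|p_t - L_0 p| \le C x_n^{\gamma/2}$, which is the required bound for $N=0$. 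For the approximation estimate I would use $u(x',0,t) \equiv 0$ to write $u(X) = \int_0^{x_n} u_n(x',s,t)\,ds$, split $u_n(x',s,t) - u_n(O)$ into an $x_n$-part bounded by $C s^{1-\gamma/2}$ via \Cref{bdry_c1a} and a tangential part bounded by $C(|x'|+\sqrt{|t|})$ via iterated application of \Cref{rmk:bdry_hi} to $u_n$, integrate in $s$, and use $x_n \le s[X,O]^{2/(2-\gamma)}$ to arrive at $|u - p| \le C\,s[X,O]^{1+\frac{2}{2-\gamma}}$.

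For the inductive step, suppose $p^{N-1}$ has been constructed. I look for a correction $q$ made only of $s$-monomials whose degrees lie in the strip $\bigl(N-1+\frac{2}{2-\gamma},\,N+\frac{2}{2-\gamma}\bigr]$, and fix its coefficients by two compatibility conditions: (i) matching the tangential Taylor coefficients of $u_n$ at $O$, which are available in arbitrary order because tangential derivatives of $u$ satisfy the same equation with $f=0$ and therefore fall under iterated applications of \Cref{bdry_c1a} and \Cref{rmk:bdry_hi}; and (ii) cancelling the leading $x_n$-behaviour of $p^{N-1}_t-L_0 p^{N-1}$. Step (ii) reduces to a linear system for the new coefficients that is triangular in the indices $(i,j)$ of the monomials $x_n^{\frac{(2-\gamma)i}{2}+j}$, with diagonal entries essentially $A^{nn}\mu(\mu-1)$ where $\mu=\frac{(2-\gamma)i}{2}+j$. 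Setting $p^N := p^{N-1} + q$, the bound on $p^N_t-L_0 p^N$ is immediate from the construction. For the remaining bound I set $v := u - p^N$, so that $v = 0$ on $\{x_n=0\}$ and $v_t = L_0 v + g$ with $|g| \le C x_n^{1+\frac{2-\gamma}{2}(N-1)}$; a barrier in the spirit of the one in \Cref{lip_est} but adapted to the improved decay of $g$ then produces the claimed rate $|v(X)| \le C s[X,O]^{N+1+\frac{2}{2-\gamma}}$ on $\overline{Q^+_1}$.

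The hard part will be the algebraic step (ii): verifying that the map from new $x_n$-coefficients in the strip $\bigl(N-1+\frac{2}{2-\gamma},\,N+\frac{2}{2-\gamma}\bigr]$ to their image under $L_0$ has trivial kernel. For $\gamma<1$, every monomial that enters has exponent $\mu>1$, so $\mu(\mu-1)>0$ and invertibility is automatic; the only potential resonance is $\mu=1$, corresponding to $(i,j)=(0,1)$, and this is absorbed into the already-frozen leading term $u_n(O)\, x_n$ from the base case. The case $\gamma=1$ produces further resonances at integer exponents and requires enlarging the basis with the logarithmic monomials included in the second line of \eqref{def:s-pol}; the same inductive argument still runs once those generalized monomials are admitted into $q$.
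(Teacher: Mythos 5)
Your construction of the $s$-polynomial $p^N$ — iteratively matching tangential Taylor coefficients of $u_n(\cdot,0,\cdot)$ and cancelling leading $x_n$-behaviour through a triangular system with diagonal entries of the form $A^{nn}\mu(\mu-1)$ — is essentially what the paper produces through the functions $U^l$ of \eqref{form_ui} followed by Taylor expansion, and your remarks about the resonance at $\mu=1$ and the logarithmic monomials for $\gamma=1$ are on target. The base-case estimate by integrating $u_n(x',s,t)-u_n(O)$ in $s$ is also right in spirit, though for $\gamma\le 0$ you must use the refined inequality \eqref{eq:un_holder} from inside the proof of \Cref{bdry_c1a}, which yields $|u_n(x',s,t)-u_n(x',0,t)|\le Cs^{1-\gamma/2}$; the cruder bound $\le C|x_n-y_n|$ stated in the lemma for $\gamma\le 0$ only gives $x_n^2=s[X,O]^{4/(2-\gamma)}$, which is weaker than the target $s[X,O]^{1+\frac{2}{2-\gamma}}$ precisely when $\gamma<0$.

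The genuine gap is the last step: a barrier ``in the spirit of \Cref{lip_est}'' cannot produce $|v(X)|\le C\,s[X,O]^{N+1+\frac{2}{2-\gamma}}$. In \Cref{lip_est} the comparison function $\varphi(x_n)=x_n^{2-\gamma-\delta}-Kx_n$ controls $|u|$ near $\{x_n=0\}$ only through its linear piece $-Kx_n$, giving a rate no better than $x_n$; replacing $-Kx_n$ by $-Kx_n^{\mu}$ with $\mu>1$, as the claimed rate would require, ruins the sign structure, since $x_n^{\gamma}D_{nn}(x_n^{\mu})=\mu(\mu-1)x_n^{\gamma+\mu-2}>0$, so $(\partial_t-L_0)x_n^{\mu}$ is \emph{negative} at leading order — the sign of a subsolution, not of the supersolution a barrier needs to dominate a forcing $g$ that may itself be positive. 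There is also a structural obstruction: because $p^N$ has Taylor-polynomial tangential coefficients, $v_n(x',0,t)=u_n(x',0,t)-T^0(x',t)$ is the Taylor remainder and does not vanish off $(x',t)=(0',0)$, so $v\sim v_n(x',0,t)\,x_n$ as $x_n\to0$ at a fixed tangential point and no pure $x_n$-power bound $|v|\le Cx_n^{1+\frac{2-\gamma}{2}(N+1)}$ can hold. The paper avoids both problems by working first with the intermediate approximant $v^N=\sum_l U^l(x',t)\,x_n^{1+\frac{2-\gamma}{2}l}$ with \emph{smooth} (not polynomial) coefficients, proving $|D^{\beta}_{(x',t)}(u-v^N)|\le Cx_n^{1+\frac{2-\gamma}{2}(N+1)}$ uniformly over tangential multi-indices $\beta$ — this is \eqref{approx_N-th}, obtained by differentiating the identity $x_n^{\gamma}A^{nn}u_{nn}=u_t-\dots$ tangentially, integrating in $x_n$, and inducting on $N$ — and only afterwards applying Taylor's theorem to each $U^l$ to convert the $x_n$-decay into the $s$-estimate. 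The integration computation you already did for $N=0$ is exactly the seed of that inductive argument and must be iterated; it cannot be replaced by a comparison function.
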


\begin{proof}
By \Cref{bdry_c1a}, the function $U^{0}(x',t) \coloneqq u_n(x',0,t)$ is well-defined. Furthermore, by \Cref{rmk:bdry_hi}, $U^{0}$ is infinitely differentiable and hence we have $D_{(x',t)}^{\beta} U^{0} (x',t)= D_{(x',t)}^{\beta} u_{n}(x',0,t) $ for all $\beta \in \mathbb{N}_{0}^n.$ For each $N \in \mathbb{N}_0$, consider a function $v^N$ of the form
\begin{equation*}
v^{N}(X) = \sum_{i=0}^{N} U^{i}(x',t) x_n^{1+\frac{2 - \gamma}{2}i}.
\end{equation*}
Then, the function $ (v^N_t - L_0 v^N)$ is expressed as follows:   
\begin{equation} \label{mv_L0v}
	(v^N_t - L_0 v^N) (X) =  \sum_{i=1}^{N+2} g^{i} (x',t) x_n^{1+ \frac{2-\gamma}{2}(i-2)} 
\end{equation}
where each $g^{i}$ is an unknown smooth function. For arbitrary fixed positive integer $l \leq N$, the function $g^l$ in $( \partial_t - L_0) v^N$ is the sum of the coefficient functions of 
\begin{equation} \label{ith_mono}		 
	x_n^{1+\frac{2-\gamma}{2} (l-2)}
\end{equation}
in the functions obtained by expanding the following operator
\begin{equation} \label{op_exp}
	(v^N_t - L_0 v^N) (X) =  \sum_{i=0}^N  (\partial_t - L_0) \left(U^{i} (x',t) x_n^{1+\frac{2-\gamma}{2} i} \right).
\end{equation}
If we choose $U^1$ such that $g^1 \equiv 0$, then 
\begin{equation*}
	U^1(x',t) = -\frac{4}{(2-\gamma)(4-\gamma)A^{nn}} \big(2A^{i'n} U_{i'}^0(x',t) + B^n U^0(x',t) \big).
\end{equation*}
In the expansion of \eqref{op_exp}, the coefficient function for the monomial of the form \eqref{ith_mono} can be obtained as follows: 
\begin{align*}
	&-  x_n^{\gamma} A^{nn} D_n^2 \left( U^l  x_n^{1+\frac{2-\gamma}{2} l}  \right)  -  x_n^{\gamma/2} (2 A^{i'n} D_{i'n} + B^n D_n) \left( U^{l-1}  x_n^{1+\frac{2-\gamma}{2} (l-1)}  \right)\\
	&\qquad\qquad\qquad\qquad\qquad + (\partial_t - A^{i'j'} D_{i'j'} -B^{i'}D_{i'}- C^- )\left( U^{l-2}  x_n^{1+\frac{2-\gamma}{2} (l-2)}  \right)  \\
	& \quad= -\bigg[  \frac{(2-\gamma)l }{2}   \Big( 1+ \frac{(2-\gamma)l}{2} \Big) A^{nn} U^{l} +  \Big( 1+ \frac{(2-\gamma)(l-1)}{2} \Big) (2A^{i'n} D_{i'}+ B^n)U^{l-1} \\
	&\qquad\qquad\qquad\qquad\qquad\qquad\quad - (\partial_t - A^{i'j'} D_{i'j'} -B^{i'}D_{i'}- C^- ) U^{l-2} \bigg] x_n^{1+\frac{2-\gamma}{2}(l-2)}.
\end{align*}
Now, taking $U^{l}$ such that $g^{l} \equiv 0$, we can determine $U^{l}$ inductively using the following formula:
\begin{equation} \label{form_ui} 
	U^{l} =  -\frac{\Big( 1+ \frac{(2-\gamma)(l-1)}{2} \Big) \left(2A^{i'n} D_{i'} + B^n  \right)U^{l-1}  -(\partial_t  - A^{i'j'} D_{i'j'} - B^{i'} D_{i'} - C^-) U^{l-2}}{ \frac{(2-\gamma)l }{2} \Big( 1+ \frac{(2-\gamma)l}{2} \Big) A^{nn} } 
\end{equation}
Since $l$ was arbitrary, we found all $U^i$. 

Using the recurrence relation \eqref{form_ui}, for any $N \in \mathbb{N}_0$, we can construct the function $v^N$ such that all monomials of the form
$$ g^{i}(x',t)x_n^{1+\frac{2-\gamma}{2}(i-2)} \quad  ( i \leq N  ) $$ 
are deleted in $( v^N_t - L_0 v^N)  $. Thus, we have
\begin{equation*}
	(v_t^N - L_0 v^N) (X) = g^{N+1} (x',t) x_n^{1+\frac{2-\gamma}{2}(N-1)} + g^{N+2} (x',t) x_n^{1+\frac{2-\gamma}{2}N} .
\end{equation*}
This implies that 
\begin{equation} \label{rem_N-th}
	|D_{(x',t)}^\beta (v_t^N - L_0 v^N)(X)| \leq Cx_n^{1 + \frac{2-\gamma}{2}(N-1)}  
\end{equation}
for all $X \in \overline{Q^+_{3/4}}$ and $\beta \in \mathbb{N}_{0}^{n}$, where $C$ depends on $n$, $\lambda$, $\Lambda$, $\gamma$, $\beta$, and $N$. 

We now claim, for any $N \in \mathbb{N}_0$, the following inequality holds:
\begin{equation} \label{approx_N-th}
| D_{(x',t)}^\beta (u - v^N )(X) | \leq  C x_n^{1+ \frac{2-\gamma}{2}(N+1)}
\end{equation}
for all $ X \in \overline{Q^+_{1/2}}$ and $\beta \in \mathbb{N}_{0}^n$, where $C>0$ is a constant depending only on $n$, $\lambda$, $\Lambda$, $\gamma$, $\beta$, and $N$.
The proof is by induction on $N$. Suppose first $N=0$ and let $w^0 = D_{(x',t)}^\beta (u-v^0) $, then $w^0$ is a solution of $w_t^0 = L_0 w^0 + f^0$ in $Q^+_{3/4}$, where $f^0=D_{(x',t)}^\beta ( L_0 v^0 - v_t^0) $. From \eqref{rem_N-th}, $f^{0}$ satisfy assumptions of \Cref{bdry_c1a}, thus inequality \eqref{eq:un_holder} yields
$$| w_n^0 (X)  |  = | w_n^0(X) - w_n^0(x',0,t) | \leq  Cx_n^{\frac{2-\gamma}{2}} \quad \mbox{for all } X \in \overline{Q^+_{1/2}} $$
and hence
$$|D_{(x',t)}^\beta (u - v^0) (X) |=|w^0(X) - w^0(x',0,t) | \leq  Cx_n^{1 + \frac{2 - \gamma}{2}} \quad \mbox{for all } X \in \overline{Q^+_{1/2}}. $$
Next, assume \eqref{approx_N-th} is valid for some nonnegative integer $(N-1)$. Let $w^N = D_{(x',t)}^\beta (u - v^N)$, then
\begin{align}
	|w^N(X)| 
	& \leq  | D_{(x',t)}^\beta u(X) -  D_{(x',t)}^\beta v^{N-1}(X) | + | D_{(x',t)}^\beta U^{N}(x',t) x_n^{1+\frac{2-\gamma}{2}N}| \leq C x_n^{1+\frac{2-\gamma}{2}N} \label{inq:wN}
\end{align}
for all $X\in \overline{Q^+_{1/2}}$ and $w_t^N = L_0 w^N + f^{N} $ in $Q^+_{3/4}$, where $f^{N} =D_{(x',t)}^\beta (L_0 v^{N}- v_t^{N}) $. Since $\beta$ was arbitrary, the partial derivatives of $w^N$ except in the $x_n$-direction satisfy \eqref{inq:wN} again. We know from \eqref{rem_N-th} that $f^N$ satisfies assumptions of \Cref{bdry_c1a}. Combining \eqref{ieq1:u_in} and \eqref{inq:wN} gives 
\begin{align*}
	\left| \int_0^{x_n} z^{-\gamma/2} w_{i'n}^N (x',z,t) \,dz \right| &\leq \left| x_n^{-\gamma/2} w_{i'}^N (x,t) \right| + \frac{|\gamma|}{2} \int_0^{x_n} | z^{-\gamma/2-1} w_{i'}^N (x',z,t)| \, dz   \leq C x_n^{\frac{2-\gamma}{2}(N+1)}
\end{align*}
for all $X \in \overline{Q^+_{1/2}}$. Similarly, we also have 
\begin{equation*}
 \left| \int_0^{x_n} z^{-\gamma/2} w_n^N(x',z,t) \,dz \right| \leq C x_n^{\frac{2-\gamma}{2}(N+1)}
\end{equation*}
for all $X \in \overline{Q^+_{1/2}}$. Combining \eqref{ieq:int_u_nn}, \eqref{rem_N-th}, and \eqref{inq:wN} gives 
\begin{align*}
	|w_n^N (X) |&= \left| \int_0^{x_n} w_{nn}^N (x',z,t)\, dz \right| \nonumber \\
	&   \leq C \left( \int_0^{x_n} z^{1+\frac{2-\gamma}{2}N - \gamma} \,dz + \left| \int_0^{x_n} z^{-\gamma/2} w_{i'n}^N (x',z,t) \,dz \right|  \right. \\
	&\qquad \qquad \left. +  \left| \int_0^{x_n} z^{-\gamma/2} w_n^N(x',z,t) \,dz \right| +  \left| \int_0^{x_n} z^{-\gamma} f^N(x',z,t) \,dz \right|  \right)  \\
	&   \leq C x_n^{\frac{2-\gamma}{2}(N+1)} 
\end{align*}
for all $X \in \overline{Q^+_{1/2}}$ and hence 
\begin{equation*} 
	|D_{(x',t)}^\beta(u - v^N) (X) | = |w^N(X) | \leq  C x_n^{1 + \frac{2-\gamma}{2}(N+1)} \quad \mbox{for all } X \in \overline{Q^+_{1/2}} . 
\end{equation*}
Thus, by mathmetical indution, \eqref{approx_N-th} holds for all $N \in \mathbb{N}_0$. 

Since $U^{l}\, (l=0,1,2,\cdots,N)$ is smooth, by Taylor theorem, for any $\beta =(\beta_1, \beta_2, \cdots, \beta_{n-1}) \in \mathbb{N}_{0}^{n-1}$ and $k \in \mathbb{N}_0$ with $|\beta| + 2k = N + 1 - l $, there exists Taylor polynomial $T^l(x',t)$ of degree $(N - l)$ such that 
$$|U^{l}(x',t) -T^l(x',t) | \leq C \sum_{|\beta| + 2k = N - l + 1 } |x_1|^{\beta_1} \cdots |x_{n-1}|^{\beta_{n-1}} |t|^{k}  $$
for all $|x_i| < 1/2 \, (i=1,2,\cdots,n-1)$ and $- 1/4 < t \leq 0$. This implies that 
\begin{align}
	\left| v^N(X) - \sum_{l=0}^{N} T^l(x',t) x_n^{1+\frac{2 - \gamma}{2}l} \right| & \leq C  \sum_{l=0}^{N}  \sum_{|\beta| + 2k = N - l + 1} |x_1|^{\beta_1} \cdots |x_{n-1}|^{\beta_{n-1}} |t|^{k} x_n^{1+\frac{2 - \gamma}{2}l} \nonumber \\
	& \leq C s[X,O]^{N + 1 +  \frac{2}{2-\gamma} }  \label{appox_taylor}
\end{align}
for all $X \in \overline{Q^+_{1/2}}$. Now put 
\begin{equation}  \label{appox_spol}
	p (X) =\sum_{l=0}^{N} T^l(x',t) x_n^{1+\frac{2 - \gamma}{2}l}
\end{equation}
which is an $s$-polynomial of degree $(N + \frac{2}{2-\gamma})$. Then, from \eqref{rem_N-th}, we know that
\begin{equation*}
	|(p_t -L_0 p) (X)| \leq Cx_n^{1 + \frac{2-\gamma}{2}(N-1)}\quad \mbox{for all } X \in \overline{Q^+_{1/2}}.
\end{equation*}
Combining  \eqref{approx_N-th} and \eqref{appox_taylor} gives 
\begin{align*}
	|u(X) - p (X)| & \leq |u(X)- v^{N}(X) | + |v^{N}(X) - p (X)| \\
	& \leq C x_n^{1 + \frac{2-\gamma}{2}(N+1)} + C s[X,O]^{N+1+\frac{2}{2-\gamma}}   \\
	& \leq C s[X,O]^{N+1+\frac{2}{2-\gamma}}
\end{align*}
for all $X\in \overline{Q^+_{1/2}}$ which is extensible throughout $Q_1^+$.
\end{proof}
\begin{lemma} \label{calc_TBT_L0}
Let $k \in \mathbb{N}_{0}$, $0<\alpha<1$ and assume that the function $f \in C_s^{k,\alpha}(\overline{Q_1^+})$ is of the form 
\begin{equation*}  
f (X) =\begin{dcases}
		\sum_{ \substack { \frac{2i}{2-\gamma} + j <  k+\alpha \\ i,j \in \mathbb{N}_0 }} \tilde{f}^{ij}(x',t) x_n^{i+\frac{2-\gamma}{2}j} & (  \gamma < 1) \\
		\sum_{ \substack{ i \leq j < k+\alpha \\ i,j \in \mathbb{N}_0 } } \tilde{f}^{ij}(x',t) (\log x_n)^i x_n^{j/2} & (  \gamma = 1) ,
	\end{dcases}
\end{equation*}
where each $\tilde{f}^{ij}$ is a smooth function for $(x',t)$. Then there is a function $h  \in C_s^{k+2,\alpha}(\overline{Q_1^+})$ of the form
\begin{equation} \label{expan_h_L0}
	h(X) = \begin{dcases}
		\sum_{ \substack {\frac{2i}{2-\gamma} + j < k+\alpha \\ i,j \in \mathbb{N}_0}} \tilde{h}^{ij}(x',t) x_n^{i+\frac{2-\gamma}{2} (j+2)} & (  \gamma < 1) \\
		\sum_{ \substack{ i \leq j  < k+\alpha \\  i,j \in \mathbb{N}_0} } \tilde{h}^{ij}(x',t) (\log x_n)^{i+1}  x_n^{(j+2)/2} & (  \gamma = 1) 
	\end{dcases}
\end{equation}	
such that
\begin{equation*} 
	|(h_t - L_0 h - f) (X) | \leq C \|f\|_{C^{k,\alpha}_s(\overline{Q_1^+})}x_n^{\frac{2-\gamma}{2}(k+\alpha)}\quad \text{for all } X \in \overline{Q_1^+}
\end{equation*}
and
\begin{equation*} 
	\|h\|_{C^{k+2,\alpha}_s(\overline{Q_1^+})} \leq C \|f\|_{C^{k,\alpha}_s(\overline{Q_1^+})},
\end{equation*}
where each $\tilde{h}^{ij}$ is a smooth functions for $(x',t)$ and $C$ is a universal constant.
\end{lemma}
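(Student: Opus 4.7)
The plan is to invert $L_0$ monomial by monomial on the expansion of $f$. A direct computation from the definition of $L_0$ gives, for any smooth $g(x',t)$ and exponent $\nu>0$,
\begin{equation*}
L_0\bigl(g(x',t)\,x_n^\nu\bigr)=A^{nn}\nu(\nu-1)\,g(x',t)\,x_n^{\nu-2+\gamma}+R_{\nu}(g)(X),
\end{equation*}
where the remainder $R_{\nu}(g)$ is a finite sum of smooth-in-$(x',t)$ coefficients times monomials of $x_n$-exponent strictly larger than $\nu-2+\gamma$ (contributions from $2x_n^{\gamma/2}A^{i'n}D_{i'n}$, $A^{i'j'}D_{i'j'}$, $B^{i'}D_{i'}$, $x_n^{\gamma/2}B^n D_n$, $C^-$, and $\partial_t$ acting on the ansatz). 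Taking $\nu_{ij}\coloneqq i+\tfrac{2-\gamma}{2}(j+2)$ yields $\nu_{ij}-2+\gamma=i+\tfrac{2-\gamma}{2}j$, exactly the $x_n$-exponent of the matching term of $f$, so inverting $L_0$ boils down to dividing by the indicial factor $A^{nn}\nu_{ij}(\nu_{ij}-1)$.

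For $\gamma<1$ one has $\nu_{ij}\ge 2-\gamma>1$, so this factor is strictly positive. I would build the $\tilde h^{ij}$ inductively in increasing order of $i+\tfrac{2-\gamma}{2}j$: at each stage the matching condition
\begin{equation*}
-A^{nn}\nu_{ij}(\nu_{ij}-1)\tilde h^{ij}+\bigl(\text{already-known remainders at the current exponent}\bigr)=\tilde f^{ij}
\end{equation*}
determines $\tilde h^{ij}$ explicitly as a universal linear combination of $(x',t)$-derivatives of $\tilde f^{i'j'}$ at or below the current stage. After exhausting all admissible indices, the residual $h_t-L_0 h-f$ consists of terms with $x_n$-exponent at least $\tfrac{2-\gamma}{2}(k+\alpha)$, yielding the pointwise bound. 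Since each $\tilde h^{ij}$ is a universal finite-order differential operator in $(x',t)$ applied to the $\tilde f^{i'j'}$, and the $C^{k,\alpha}_s$-norm of $f$ controls all such coefficients, the norm bound $\|h\|_{C^{k+2,\alpha}_s}\le C\|f\|_{C^{k,\alpha}_s}$ then follows.

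For $\gamma=1$ the indicial factor vanishes at $\nu_{00}=1$, which is exactly why the ansatz shifts the log-power from $(\log x_n)^{i}$ to $(\log x_n)^{i+1}$. The identity
\begin{equation*}
x_n D_{nn}\bigl((\log x_n)^{i+1}x_n^\mu\bigr)=\mu(\mu-1)(\log x_n)^{i+1}x_n^{\mu-1}+(i+1)(2\mu-1)(\log x_n)^{i}x_n^{\mu-1}+\textnormal{lower-log terms}
\end{equation*}
shows that when $\mu(\mu-1)=0$ the leading log-power drops by one with nonzero coefficient $(i+1)(2\mu-1)$, providing the required inversion. I would then run the same inductive construction, ordered primarily by $x_n$-exponent and secondarily by decreasing log-power, with lower-log remainders at each stage absorbed into later stages at the same $x_n$-exponent. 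The main obstacle will be the careful bookkeeping in this $\gamma=1$ case: verifying that the admissibility constraint $i\le j$ in the expansion of $h$ is preserved under the iteration, and tracking the log-powers through the resonance. The built-in constraint $i\le j$ in the hypothesis is precisely what makes this closed system solvable.
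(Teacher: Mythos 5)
Your $\gamma<1$ argument is correct and matches the paper's in substance. The indicial factor $A^{nn}\nu_{ij}(\nu_{ij}-1)$ is strictly positive since $\nu_{ij}\ge 2-\gamma>1$, and the remainder $R_\nu$ lands at strictly higher $x_n$-exponents (namely $\nu-1+\gamma/2$ and $\nu$, corresponding to the index shifts $(i,j)\mapsto(i,j+1)$ and $(i,j+2)$), so the monomial-by-monomial inversion closes. The paper does the same computation, but writes $x_n^{i+\frac{2-\gamma}{2}j}$ as $z^i x_n^{\frac{2-\gamma}{2}j}$ in an auxiliary variable $z$, which makes the bookkeeping explicit --- in particular, that the modified operator never moves a term between different values of $i$ when $\gamma<1$, so the recursion in fact closes in $j$ for each fixed $i$. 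Your ordering by $i+\frac{2-\gamma}{2}j$ works because of this same invariance, even if you leave it implicit.

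For $\gamma=1$ your sketch has a genuine gap. The log-power drop that you invoke occurs only when $\mu(\mu-1)=0$, i.e.\ $\mu=1$, i.e.\ $j=0$. For $j\ge1$ one has $\mu=(j+2)/2>1$, so $\mu(\mu-1)=\tfrac{j(j+2)}{4}\ne0$ and $x_nD_{nn}\bigl((\log x_n)^{i+1}x_n^{(j+2)/2}\bigr)$ still carries a nonzero leading $(\log x_n)^{i+1}x_n^{j/2}$ term rather than dropping a log-power. Consequently, at fixed $x_n$-exponent $x_n^{m/2}$ the coefficient of $(\log x_n)^{l}$ depends simultaneously on $\tilde h^{(l-1)m}$, $\tilde h^{lm}$ and $\tilde h^{(l+1)m}$ (with coefficients $\tfrac{m(m+2)}{4}$, $(l+1)(m+1)$ and $(l+1)(l+2)$): the matching conditions form a tridiagonal system, not a triangular one, and no one-directional sweep in log-power decouples it. ``Absorbing lower-log remainders into later stages'' does not substitute for the missing step, because the coupling is two-sided. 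The paper's resolution is to assemble the $m+1$ matching conditions into a linear system with an explicit tridiagonal coefficient matrix $T^m$ acting on the vector of $\tilde h^{0m},\dots,\tilde h^{mm}$ and invert $T^m$. That simultaneous linear-algebra step is the idea your proposal is missing; you flag the difficulty at the end, but the closing sentence that the constraint $i\le j$ ``makes this closed system solvable'' is precisely the claim that needs the tridiagonal analysis to justify.
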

In fact, \Cref{calc_TBT_L0} holds true even when the coefficients are $s$-polynomial, and the general case will be proved in \Cref{calc_TBT}, so it is omitted here.
\begin{lemma} \label{lem:ccf_hi_nzf}
Let  $k \in \mathbb{N}_{0}$, $0 < \alpha < 1$ with $ k+1+\alpha-\frac{2}{2-\gamma}  \not\in \mathbb{N}_0 $, and assume $f \in C^{k,\alpha}_s(\overline{Q^+_1})$. Suppose $u \in C^2 (Q^+_1)\cap  C(\overline{Q^+_1})$ is a solution of \eqref{eq:cst_coeff} satisfying $u=0$ on $\{ X \in \partial_p Q^+_1 :x_n=0\}$. Then there exists an $s$-polynomial $p$ of degree $m$ corresponding to $\kappa \coloneqq k + 2 + \alpha$ at $O$ such that  
\begin{equation*}
|u(X)-p(X)| \leq C\left(\|u\|_{L^\infty(Q^+_1)}+\|f\|_{C^{k,\alpha}_s(\overline{Q^+_1})} \right) s[X,O]^{k+2+\alpha} \quad \mbox{for all } X \in \overline{Q^+_1} ,
\end{equation*}
where $C > 0$ is a universal constant.
\end{lemma}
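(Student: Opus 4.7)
The plan is to reduce the inhomogeneous problem to the homogeneous case of \Cref{lem:ccf_hi_zf} by first absorbing the principal part of $f$ into a correction $h$ via \Cref{calc_TBT_L0}, and then treating the resulting small-forcing problem by a dyadic iteration.

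Since $f \in C^{k,\alpha}_s(\overline{Q_1^+})$, I can extract at $O$ an $s$-polynomial $q_f$ of degree $<k+\alpha$ with $|f(X) - q_f(X)| \leq C\|f\|_{C^{k,\alpha}_s(\overline{Q_1^+})}\, s[X,O]^{k+\alpha}$ and coefficients bounded by the same norm. Applying \Cref{calc_TBT_L0} to $q_f$ produces $h \in C^{k+2,\alpha}_s(\overline{Q_1^+})$ of the form \eqref{expan_h_L0}---so $h$ vanishes on $\{x_n = 0\}$---satisfying
\begin{equation*}
|h_t - L_0 h - q_f|(X) \leq C\|f\|_{C^{k,\alpha}_s}\, x_n^{\tfrac{2-\gamma}{2}(k+\alpha)}, \qquad \|h\|_{C^{k+2,\alpha}_s} \leq C\|f\|_{C^{k,\alpha}_s}.
\end{equation*}
Setting $w := u - h$ then yields $w_t = L_0 w + \tilde{f}$ in $Q_1^+$ with $|\tilde{f}(X)| \leq C\|f\|_{C^{k,\alpha}_s}\, s[X,O]^{k+\alpha}$ (using $x_n^{(2-\gamma)/2} \leq s[X,O]$) and $w = 0$ on $\{X \in \partial_p Q_1^+ : x_n = 0\}$. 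Because $h \in C^{k+2,\alpha}_s$, the definition of the norm already yields an $s$-polynomial $p_h$ of degree $m$ at $O$ with $|h - p_h|(X) \leq C\|f\|_{C^{k,\alpha}_s}\, s[X,O]^{k+2+\alpha}$, so the task reduces to constructing an analogous approximant $p_w$ for $w$ and taking $p := p_h + p_w$.

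To approximate $w$, I run a dyadic iteration in the intrinsic scaling. Fix small $\eta \in (0,1/2)$ and build $s$-polynomials $p_j$ of degree $m$ with $p_0 = 0$, $\|w - p_j\|_{L^\infty(Q^+_{\eta^j})} \leq M\eta^{j(k+2+\alpha)}$, and summable increments. Since $L_0$ has constant coefficients, the dilation $\eta^j X := (\eta^j x', \eta^{2j/(2-\gamma)} x_n, \eta^{2j} t)$ preserves the operator and the boundary condition, so the rescaled function $w_j(X) := [(w - p_j)(\eta^j X)]/\eta^{j(k+2+\alpha)}$ is uniformly bounded on $Q_1^+$, vanishes on $\{x_n = 0\}$, and satisfies $(w_j)_t = L_0 w_j + F_j$ with $F_j$ uniformly bounded (the tail of $\tilde{f}$ together with $(\partial_t - L_0)p_j$ at scale $\eta^j$). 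At unit scale I solve the auxiliary Dirichlet problem $v_t = L_0 v + F_j$ with $v = 0$ on $\partial_p Q_1^+$ using the existence theory of \Cref{sec:pre} (so $\|v\|_\infty$ is controlled), then apply \Cref{lem:ccf_hi_zf} to the homogeneous function $w_j - v$ with $N$ chosen so that $N + 1 + 2/(2-\gamma) > k+2+\alpha$. Truncating the resulting $s$-polynomial to its degree-$m$ part produces $\tilde{p}_j$ with $\|w_j - \tilde{p}_j\|_{L^\infty(Q^+_\eta)} \leq M'\eta^{k+2+\alpha}$, and unscaling yields $p_{j+1}(X) = p_j(X) + \eta^{j(k+2+\alpha)}\tilde{p}_j(X/\eta^j)$, closing the induction; summing the increments gives $p_w := \lim_j p_j$.

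The principal obstacle is the truncation step inside the iteration: the approximant from \Cref{lem:ccf_hi_zf} has degree $N + 2/(2-\gamma)$, so the monomials discarded in passing to the degree-$m$ truncation must be bounded by a multiple of $\eta^{k+2+\alpha}$, uniformly in $j$. This is exactly where the spectral hypothesis $k+1+\alpha - 2/(2-\gamma) \notin \mathbb{N}_0$ is used: it rules out a resonance that would place one of the basis monomials at intrinsic degree equal to $k+2+\alpha$, thereby ensuring a uniform gap between $m$ and $k+2+\alpha$ and a constant in the truncation estimate independent of the scale. The auxiliary step of solving for $v$ and controlling $\|v\|_\infty$ uses only \Cref{apriori_bound1}, while the case $\gamma = 1$, in which the $s$-polynomial basis contains logarithmic monomials of the form $(\sqrt{x_n}\log x_n)^{j}$, is handled by the same scheme with routine bookkeeping adjustments in the scaling calculation.
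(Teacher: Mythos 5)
Your opening move---extracting an $s$-polynomial approximant of $f$ at $O$, applying \Cref{calc_TBT_L0} to produce the corrector $h$, and passing to $w = u-h$ with small forcing $\tilde f$---matches the paper's first step. The gap is in the iteration that follows.

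You subtract the $s$-polynomial $\tilde{p}_j$ at each step, defining $w_{j+1}(X) = (w_j - \tilde{p}_j)(\eta X)/\eta^{k+2+\alpha}$. Since $\tilde{p}_j$ only approximately solves the homogeneous equation, $(\partial_t - L_0)\tilde{p}_j \not\equiv 0$, and this residual is fed into $F_{j+1}$. You describe $F_j$ as merely ``uniformly bounded,'' but that is not quantitative enough to close the induction. The auxiliary Dirichlet solution $v$ with forcing $F_j$ and zero lateral boundary data satisfies, by the boundary Lipschitz estimate of \Cref{lip_est}, at best $|v(X)| \leq C\|F_j\|_{L^\infty}\, x_n$ on $Q_{1/2}^+$, and hence
\begin{equation*}
\|v\|_{L^\infty(Q_\eta^+)} \leq C\,\|F_j\|_{L^\infty}\,\eta^{\frac{2}{2-\gamma}}.
\end{equation*}
Because $\tfrac{2}{2-\gamma} \leq 2 < k+2+\alpha$ once $k\geq 1$, your claimed estimate $\|w_j - \tilde{p}_j\|_{L^\infty(Q_\eta^+)} \leq M'\eta^{k+2+\alpha}$ requires $\|F_j\|_{L^\infty} \leq C\,\eta^{k+2+\alpha - \frac{2}{2-\gamma}}$, an exponent strictly greater than $1$. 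But $F_j$ contains the accumulated rescaled residuals $(\partial_t - L_0)\tilde{p}_{j'}$ for $j'<j$, and by the residual bound in \Cref{lem:ccf_hi_zf} each contributes on the order of $\eta^{(j-j')\delta_0}$ with $\delta_0 = N + \tfrac{2}{2-\gamma} - (k+1+\alpha) \in (0,1)$. So $\|F_j\|_{L^\infty}$ is of order $\eta^{\delta_0}$ with $\delta_0 < 1$, which is far too large; the induction hypothesis cannot be propagated.

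The paper avoids this by never subtracting the polynomial inside the iteration. At each scale it decomposes $u^l = v^l + w^l$, where $v^l$ solves the \emph{homogeneous} Cauchy--Dirichlet problem with boundary data $u^l$ itself (not zero), and $w^l$ solves the inhomogeneous problem with zero boundary data. Since $(\partial_t - L_0)v^l = 0$ exactly, rescaling $w^l$ to produce $u^{l+1}$ yields a forcing that is simply the rescaled $\tilde{f}^l$---no residual accumulates, and the smallness $O(\varepsilon)$ is preserved at every scale. The approximants from \Cref{lem:ccf_hi_zf} are applied to the $v^l$ and assembled only at the very end into the telescoping sum. In short, you should subtract the homogeneous solution during the iteration, not the polynomial; the polynomials catch up afterwards.

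A secondary, non-fatal point: your reading of $k+1+\alpha - \tfrac{2}{2-\gamma} \notin \mathbb{N}_0$ as ruling out a resonance ``at intrinsic degree $k+2+\alpha$'' in a truncation step is off. Its actual role is to guarantee an integer $N$ in the open interval $\big(k+1+\alpha - \tfrac{2}{2-\gamma},\, k+2+\alpha - \tfrac{2}{2-\gamma}\big)$. With that choice of $N$, the polynomial from \Cref{lem:ccf_hi_zf} already has $s$-degree $N + \tfrac{2}{2-\gamma} < k+2+\alpha$---so no truncation is needed at all---while the approximation exponent $N + 1 + \tfrac{2}{2-\gamma}$ exceeds $k+2+\alpha$, providing the geometric factor that makes the telescoped polynomial sum converge.
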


\begin{proof}
By considering $u/(\|u\|_{L^\infty(Q^+_1)} + \varepsilon^{-1} \|f\|_{C^{k,\alpha}_s(\overline{Q^+_1})}) $  for $\varepsilon > 0$, we may assume that $\|u\|_{L^\infty(Q^+_1)}\leq1$ and $\|f\|_{C^{k,\alpha}_s(\overline{Q^+_1})} \leq \varepsilon$. Since $f  \in C^{k,\alpha}_s(\overline{Q^+_1})$, we also assume that 
\begin{align}
	|f(X) - F(X)| &\leq \varepsilon s[X,O]^{k+\alpha} \quad \text{for all } X \in \overline{Q_1^+} ,\label{approx_f_L0} 
\end{align}
where $F$ is an $s$-polynomials with degree $\tilde{m}$ corresponding to $(k+\alpha)$ at $O$. By \Cref{calc_TBT_L0},  there exists a function $h  \in C_s^{k+2,\alpha}(\overline{Q_1^+})$ of the form \eqref{expan_h_L0} such that
\begin{equation}  \label{inq:op_h_L0} 
	|(F + L_0 h - h_t) (X) | \leq C_* \varepsilon s[X,O]^{k+\alpha}\quad \text{for all } X \in \overline{Q_1^+}
\end{equation}
and  $ \|h\|_{C^{k+2,\alpha}_s(\overline{Q_1^+})} \leq C_* \varepsilon,$ where $C_*$ is a universal constant. 

Decompose $(u-h)$ into the sum of $v^1$ and $w^1$ such that
\begin{equation*}
\left\{\begin{aligned}
	v^1_t &= L_0 v^1 && \mbox{in } Q_1^+ \\
	v^1 &= u- h && \mbox{on }\partial_p Q_1^+
\end{aligned} \right.
\qquad \textnormal{and} \qquad 
\left\{\begin{aligned}
	w_t^1 &= L_0 w^1 + \tilde{f}^1 && \mbox{in } Q_1^+ \\
	w^1 & = 0 && \mbox{on }\partial_p Q_1^+
\end{aligned}\right.
\end{equation*}
where $\tilde{f}^1 = f + L_0 h - h_t$. Then, combining \eqref{approx_f_L0} and \eqref{inq:op_h_L0} leads us to the estimate
\begin{equation} \label{inq:tf<s_L0}
	|\tilde{f}^1(X)|   \leq  |f(X) -  F(X)| + |(F + L_0 h - h_t)(X)|  \leq (1+C_*) \varepsilon s[X,O]^{k+\alpha}  
\end{equation}
for all $X \in \overline{Q_1^+}$. 
By \Cref{apriori_bound1}, for fixed $r \in (0, 1)$, \eqref{inq:tf<s_L0} gives
\begin{equation} \label{w_1st_hcf_L0}
	|w^1(X)| \leq C \| \tilde{f}^1 \|_{L^{\infty}( Q_1^+ )} \leq (1+C_*)\varepsilon \quad \text{for all } X \in \overline{Q_r^+} 
\end{equation}
and hence, we have
\begin{align*}
	|u(X) - v^1(X) - h(X)| =  |w^1(X)| \leq  (1+C_*) \varepsilon  \quad \text{for all } X \in \overline{Q_r^+} . 
\end{align*}
Choose now $\varepsilon$ small enough such that  $(1+C_*) \varepsilon < r^{k+2+\alpha}$. Then, 
$$|u(X) - v^1(X) - h(X)| \leq  r^{k+2 + \alpha} \quad \text{for all }X \in \overline{Q_r^+}.$$
Let us now define
$$u^2(X) \coloneqq \frac{(u - v^1- h ) (rX) }{r^{k+2+\alpha}} \qquad \text{and} \qquad \tilde{f}^2(X) \coloneqq \frac{\tilde{f}^1(rX) }{r^{k+\alpha}} .$$
Then, $\|u^2\|_{L^{\infty}(Q_1^+ )} \leq 1$ and $u_t^2  =  L_0^2 u^2 + \tilde{f}^2 $ in $ Q_1^+$, where
\begin{align*}
	L_0^2 = A^{i'j'} D_{i'j'} + 2 x_n^{\gamma/2} A^{i'n} D_{i'n} + x_n^{\gamma} A^{nn} D_{nn}  + rB^{i'} D_{i'} + x_n^{\gamma/2} r B^n  D_n + r^2C^-   .
\end{align*}
Furthermore, \eqref{inq:tf<s_L0} leads us to the estimate
\begin{align*}
	|\tilde{f}^{2}(X)| 
& \leq r^{-(k+\alpha)} \left(  \left| f(rX) -F(rX) \right| + \left| (F + L_0 h - h_t) (rX) \right|  \right)\nonumber 
	 \leq (1+C_*)\varepsilon s[X,O]^{k+\alpha}   
\end{align*}
for all $X \in \overline{Q_1^+}$. That is, the same hypotheses as before are fulfilled. Repeating the same procedure, we decompose $u^2$ into the sum of $v^2$ and $w^2$ such that
\begin{equation*}
\left\{\begin{aligned}
	v^2_t &= L_0^2 v^2 && \mbox{in } Q_1^+ \\
	v^2 &= u^2 && \mbox{on }\partial_p Q_1^+,
\end{aligned} \right.
\qquad  \qquad 
\left\{\begin{aligned}
	w_t^2 &= L_0^2 w^2 + \tilde{f}^2 && \mbox{in } Q_1^+ \\
	w^2 & = 0 && \mbox{on }\partial_p Q_1^+,
\end{aligned}\right.
\end{equation*}
and  $|u^2(X)-v^2(X)| \leq r^{k+2+\alpha}$ for all $X \in \overline{Q_r^+}$. By substituting back, we have
\begin{equation*}
	|u(X) - h(X)  - v^1(X) - r^{k+2+\alpha} v^2 (r^{-1}X)  | \leq r^{2(k+2 + \alpha)} \quad \text{for all } X  \in \overline{Q_{r^2}^+} .
\end{equation*}
Continuing iteratively, for each integer $l \ge 3$, let us define the sequence of functions $\{u^l\}$ inductively as follows:
$$u^{l}(X) \coloneqq \frac{(u^{l-1} - v^{l-1}) (rX ) }{r^{k+2+\alpha}}  \qquad \text{and} \qquad \tilde{f}^l(X) \coloneqq \frac{\tilde{f}^{l-1}(rX) }{r^{k+\alpha}}  .$$
Then, $\|u^l\|_{L^{\infty}(Q_1^+ )} \leq 1$ and $u_t^l  =  L_0^l u^l + \tilde{f}^l $ in $Q_1^+$, where
\begin{align*}
	L_0^l  &= A^{i'j'} D_{i'j'} + 2 x_n^{\gamma/2} A^{i'n} D_{i'n} + x_n^{\gamma} A^{nn} D_{nn}  + r^{l-1} B^{i'} D_{i'} + x_n^{\gamma/2} r^{l-1} B^n D_{n} + r^{2(l-1)}C^-  .
\end{align*}
Furthermore, $|\tilde{f}^l(X)| \leq (1+C_*)\varepsilon s[X,O]^{k+\alpha} $ for all $X \in \overline{Q_1^+}$, and hence we decompose $u^l$ into the sum of $v^l$ and $w^l$ such that
\begin{equation*}
\left\{\begin{aligned}
	v^l_t &= L_0^l v^l && \mbox{in } Q_1^+ \\
	v^l &= u^l && \mbox{on }\partial_p Q_1^+,
\end{aligned} \right.
\qquad  \qquad 
\left\{\begin{aligned}
	w_t^l &= L_0^l w^l + \tilde{f}^l && \mbox{in } Q_1^+ \\
	w^l & = 0 && \mbox{on }\partial_p Q_1^+,
\end{aligned}\right.
\end{equation*}
and  $|u^l(X)-v^l(X)| \leq r^{k+2+\alpha} $ for all $X \in \overline{Q_r^+}$. By substituting back, we have
\begin{equation} \label{sbt_back_est}
	\left|u(X) - h(X)  -  \sum_{i=1}^l r^{(i-1)(k+2+\alpha)}  v^i (r^{-i+1}X) \right| \leq r^{l(k+2 + \alpha)} \quad \text{for all } X  \in \overline{Q_{r^l}^+}.
\end{equation}

Since $k+1+\alpha-\frac{2}{2-\gamma}  \not\in \mathbb{N}_0$, so we can choose $N \in \mathbb{N}_0$ such that
\begin{equation*}
k+1+\alpha -\frac{2}{2-\gamma} < N < k+2+\alpha -\frac{2}{2-\gamma}.
\end{equation*}
By \Cref{lem:ccf_hi_zf} and \eqref{appox_spol}, for each $l \in \mathbb{N}$, there exists an $s$-polynomial 
\begin{equation*}
	p^l(X) = \sum_{|\beta|+i+2j+\frac{2}{2-\gamma} < k+2+\alpha} A_l^{\beta ij} x'^{\beta} x_n^{1 + \frac{2-\gamma}{2} i} t^{j} 
\end{equation*}
of degree $(N+\frac{2}{2-\gamma})$ at $O$
such that
\begin{equation} \label{est_vl_pl}
	|v^l(X)-p^l(X)| \leq  Cs[X,O]^{N+1+\frac{2}{2-\gamma}} \quad \text{for all } X \in \overline{Q^+_1},
\end{equation}
where $C>0$ is a universal constant. Combining \eqref{sbt_back_est} and \eqref{est_vl_pl} leads us to the estimate
\begin{align}
	&\left|u(X) - h(X)  -  \sum_{i=1}^l r^{(i-1)(k+2+\alpha)}  p^i (r^{-i+1}X) \right| \nonumber \\
	&\quad \leq r^{l(k+2 + \alpha)} + C \sum_{i=1}^l r^{(i-1)(k+2+\alpha)} s[r^{-i+1}X,O]^{N+1+\frac{2}{2-\gamma}} \nonumber \\
	&\quad \leq r^{l(k+2 + \alpha)} + Cr^{l(N+1+\frac{2}{2-\gamma})} \sum_{i=1}^l r^{-(i-1)(N+\frac{2}{2-\gamma} -k-1-\alpha)} \nonumber \\
		&\quad = r^{l(k+2 + \alpha)} + \frac{C( r^{(l+1)(N+\frac{2}{2-\gamma}-k-1-\alpha)} -r^{N+\frac{2}{2-\gamma}-k-1-\alpha})}{r^{N+\frac{2}{2-\gamma}-k-1-\alpha}-1} r^{l(k+2+\alpha)} \nonumber \\
	&\quad \leq C r^{l(k+2+\alpha)} \label{est_u-h-pl}
\end{align}
for all $X  \in \overline{Q_{r^l}^+}$. Now put
\begin{equation*}
	P^l(X)= \sum_{i=1}^l r^{(i-1)(k+2+\alpha)} p^i (r^{-i+1}X)  .
\end{equation*}
Then, \eqref{est_u-h-pl} gives
\begin{align}
	|P^l(X)-P^{l-1}(X)|& \leq  \left|u(X) - h(X)  - P^l(X) \right| + \left|u(X) - h(X)  - P^{l-1}(X) \right| \nonumber \\
	&\leq  C r^{l(k+2 + \alpha)} \label{est_p-p}  
\end{align}
for all $X  \in \overline{Q_{r^l}^+}$. Consider the rescaling $s$-polynomial
\begin{align*}
	\tilde{P} (X) &\coloneqq P^l(r^l X) - P^{l-1}(r^lX)  \\
	&=r^{(l-1)(k+2+\alpha)}  \sum_{|\beta|+i+2j+\frac{2}{2-\gamma} < k+2+\alpha} r^{|\beta|+i+2j+\frac{2}{2-\gamma} } A_l^{\beta ij} x'^{\beta} x_n^{1 + \frac{2-\gamma}{2} i} t^{j} . \\
\end{align*}
From \eqref{est_p-p}, we know that $\|\tilde{P}\|_{L^{\infty}( Q_1^+ )} \leq C r^{l(k+2+\alpha)}$. Since the coefficients of $s$-polynomial $\tilde{P}$ on $Q_1^+$ are controlled by the $L^{\infty}$ norm, we have
\begin{equation*}
	|A_l^{\beta ij} | \leq \frac{C r^{ k+2 + \alpha }}{r^{ |\beta|+ i + 2j + \frac{2}{2-\gamma} }}
\end{equation*}
for all $\beta \in \mathbb{N}_0^{n-1}$ and $i,j \in \mathbb{N}_0$ with $|\beta|+ i + 2j + \frac{2}{2-\gamma} < k+2+\alpha$. It follows that $P^l$ converges uniformly to an $s$-polynomial  
$$P^{\infty}(X) =  \sum_{|\beta|+ i + 2j + \frac{2}{2-\gamma} < k+2+\alpha} B^{\beta ij} x'^{\beta} x_n^{1 + \frac{2-\gamma}{2} i} t^{j}
$$ 
of degree $(N+\frac{2}{2-\gamma})$ at $O$ such that  
\begin{equation*}
|u(X)-h(X)-P^{\infty}(X)| \leq C s[X,O]^{k+2+\alpha} \quad \mbox{for all } X \in \overline{Q^+_1}.
\end{equation*}
Finally, as in  \Cref{lem:ccf_hi_zf}, if $h$ is approximated with an $s$-polynomial, the desired $s$-polynomial $p$ can be obtained.
\end{proof}
%
%
\section{Generalized Schauder Theory} \label{sec:gst}
\subsection{$C^{2+\alpha}_s$-Regularity} 
In this section, we establish $C_s^{2+\alpha}$-regularity of solutions for \eqref{eq:main}. Since generalized Schauder theory approximates coefficients using $s$-polynomials rather than constants, boundary $C^{1,\alpha}$-regularity and higher regularity for equations with $s$-polynomials coefficients must first be obtained. To prove these, we need $C_s^{2+\alpha}$-regularity of solutions for \eqref{eq:main}.

For this section only, $L_0$ is considered the operator with the following constant coefficients
$$A^{ij}=a^{ij}(O), \quad B^i =  b^{i}(O), \quad C^-=c(O). $$ 
\begin{theorem} \label{thm:c2a_hc}
Let $0<\alpha< 1$ with $2+\alpha \notin \mathcal{D}$, and assume 
$$a^{ij}, \, b^i, \, c, \, f \in C^{\alpha}_s(\overline{Q_1^+}) \quad (i,j=1,2,\cdots,n).$$
Suppose $u \in C^2(Q_1^+) \cap C(\overline{Q_1^+})$ is a solution of \eqref{eq:main} satisfying $u=0$ on $\{X \in \partial_p  Q_1^+ : x_n=0\}$. Then $u\in C^{2+\alpha}_s(\overline{Q^+_{1/2}})$ and 
\begin{equation*}
	\|u\|_{C^{2+\alpha}_s(\overline{Q^+_{1/2}})}\leq C\left(\|u\|_{L^\infty(Q^+_1)}+\|f\|_{C^{\alpha}_s(\overline{Q^+_1})} \right),
\end{equation*}
where $C$ is a positive constant depending only on $n$, $\lambda$, $\Lambda$, $\gamma$, $\alpha$, $\|a^{ij}\|_{C^{\alpha}_s(\overline{Q^+_1})}$, $\|b^{i}\|_{C^{\alpha}_s(\overline{Q^+_1})}$, and $\|c\|_{C^{\alpha}_s(\overline{Q^+_1})}$.
\end{theorem}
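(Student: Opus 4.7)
The plan is to combine the constant-coefficient $s$-polynomial approximation of Lemma~\ref{lem:ccf_hi_nzf} with a perturbation-and-iteration argument at boundary points. By Theorem~\ref{thm:int_bd_gb} it suffices to produce, for each $\tilde Y \in \{x_n = 0\} \cap \overline{Q_{1/2}^+}$, an $s$-polynomial $p^m$ of degree $m$ corresponding to $\kappa = 2+\alpha$ at $\tilde Y$ realizing the two pointwise estimates in hypothesis (2) of that theorem; translation invariance in $(x',t)$ reduces this to the case $\tilde Y = O$. Let $L_0$ denote $L$ frozen at $O$ (i.e.\ $A^{ij}=a^{ij}(O)$, $B^i=b^i(O)$, $C^-=c(O)$), and rewrite \eqref{eq:main} as $u_t - L_0 u = f + (L - L_0) u$.

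The core technical step I would establish is an \emph{approximation lemma}: there exist universal constants $r\in(0,1/2)$, $\varepsilon_0>0$, and $C_*>0$ such that whenever $u$ solves $u_t = Lu + f$ with $\|u\|_{L^\infty(Q_1^+)} \le 1$, $\|f\|_{C_s^\alpha(\overline{Q_1^+})} \le \varepsilon_0$, and the oscillations of $a^{ij}, b^i, c$ around $O$ are at most $\varepsilon_0$, there is an $s$-polynomial $p$ of degree $m$ corresponding to $2+\alpha$ at $O$ with $\|p\|_{C^{2+\alpha}_s(\overline{Q_1^+})} \le C_*$ and $|u(X) - p(X)| \le r^{2+\alpha}$ on $\overline{Q_r^+}$. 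I would prove this by compactness and contradiction: failure would yield a sequence $(u_k, L_k, f_k)$ with coefficients of $L_k$ converging uniformly to those of $L_0$ and $f_k \to 0$, but no admissible $s$-polynomial approximation. The a priori bound of Corollary~\ref{bdry_holder} gives a uniform $C_s^\alpha$ estimate on $\{u_k\}$, so Arzel\`a--Ascoli extracts a subsequential uniform limit $u_\infty$ which, by stability under uniform convergence, solves $\partial_t u_\infty = L_0 u_\infty$ with $u_\infty = 0$ on $\{x_n=0\}$. Lemma~\ref{lem:ccf_hi_nzf} applied to $u_\infty$ then supplies the desired $s$-polynomial, contradicting the failure assumption for $k$ large.

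With the approximation lemma in hand, I would iterate via the intrinsic scaling $rX = (rx', r^{2/(2-\gamma)}x_n, r^2 t)$, under which \eqref{eq:main} transforms into an equation of the same form with rescaled coefficients $\tilde a^{ij}(X) = a^{ij}(rX)$, etc., whose $C^\alpha_s$ seminorms contract by a factor $r^\alpha$. Starting from $u^{(0)} = u$ and letting $p^{(k)}$ denote the approximation at step $k$, set $u^{(k+1)}(X) = r^{-(2+\alpha)}\bigl(u^{(k)} - p^{(k)}\bigr)(rX)$; each $u^{(k)}$ again satisfies the hypotheses of the approximation lemma (the rescaled source absorbing $f + (\partial_t - L)p^{(k)}$ remains bounded in $C^\alpha_s$ by standard computations on $s$-polynomials). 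The rescaled sum $\sum_k r^{k(2+\alpha)} p^{(k)}(r^{-k}\,\cdot\,)$ then defines a Cauchy sequence in the coefficient space of $s$-polynomials of degree $m$ at $O$, converging to a single $s$-polynomial $p$ satisfying $|u(X)-p(X)| \le C\,s[X,O]^{2+\alpha}$ on $\overline{Q_1^+}$. The matching estimate on $(\partial_t - L)(u-p)$ follows by expanding the action of $L$ on $p$ (an explicit computation analogous to \Cref{calc_TBT_L0}). Combining with Theorem~\ref{thm:int_bd_gb} and the hypothesis on $\|p\|_{C^{2+\alpha}_s}$ yields the claimed estimate.

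The main obstacle I expect is the compactness step in the approximation lemma: the passage to the limit requires uniform $C^\alpha_s$ control on the $u_k$ up to the degenerate boundary $\{x_n=0\}$, and Corollary~\ref{bdry_holder} only delivers this for $\alpha < \min\{2/(2-\gamma),1\}$, so the range of $\alpha$ treated at this stage must be restricted accordingly before being bootstrapped upward. A secondary difficulty is propagating the iteration cleanly: one must check that after subtracting an $s$-polynomial $p^{(k)}$, the residual source $f + (\partial_t - L)p^{(k)}$ continues to lie in $C^\alpha_s$ with a bound independent of $k$, which relies on the structural observation that $(\partial_t - L)$ maps the space of $s$-polynomials of degree $m$ into $C_s^\alpha$ with a controlled norm---the same identity that underlies the hypothesis $2+\alpha \notin \mathcal{D}$ being imposed to avoid resonant degrees where the approximation would not close.
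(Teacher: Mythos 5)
Your proposal shares the overall architecture of the paper's proof — an approximation lemma against the frozen operator $L_0$, followed by an intrinsic-scaling iteration and a final appeal to \Cref{thm:int_bd_gb} — but diverges at the key technical step. The paper proves the approximation lemma (\Cref{approx_lem}) \emph{constructively}: it solves $h_t = L_0 h$ in $Q^+_{3/4}$ with boundary data $u$, bounds $u-h$ on a boundary layer of width $\delta$ via \Cref{bdry_holder}, bounds $\|h\|_{C^{2,\alpha}_s}$ on the $\delta$-interior via \Cref{thm:ccf_eq} (giving $C\delta^{-2-\alpha}$), estimates $u-h$ by the maximum principle with a forcing term controlled by the coefficient oscillation $\varepsilon$ times $\delta^{-2-\alpha}$, and then optimizes $\delta = \varepsilon^{1/(2(2+\alpha))}$ to get an explicit rate $\varepsilon^\theta$ with $\theta = \alpha/(2(2+\alpha))$. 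You instead propose compactness and contradiction. This is a legitimate alternative, and it avoids the explicit barrier bookkeeping, but as stated it does not close.

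The concrete gap is in the rate you invoke for the compactness limit. You apply \Cref{lem:ccf_hi_nzf} with $k=0$ to $u_\infty$, which gives $|u_\infty - p| \leq C\,s[X,O]^{2+\alpha}$. On $\overline{Q_r^+}$ this is $C r^{2+\alpha}$, and since $C$ is a fixed universal constant that cannot be taken smaller than $1$, this does not contradict the assumed failure bound $r^{2+\alpha}$ for any choice of $r$. Because the limit $u_\infty$ solves the \emph{homogeneous} constant-coefficient equation, you should instead appeal to \Cref{lem:ccf_hi_zf}, which yields an $s$-polynomial $\tilde p$ of arbitrarily high degree with $|u_\infty - \tilde p| \leq C\,s^{N+1+2/(2-\gamma)}$; truncating $\tilde p$ to the degrees $<2+\alpha$ (possible since $2+\alpha\notin\mathcal D$) produces a $p_\infty$ with $|u_\infty - p_\infty| \leq C\,s^{2+\alpha+\epsilon}$ for some $\epsilon>0$, and then choosing $r$ with $Cr^\epsilon \leq 1/2$ closes the contradiction for $k$ large. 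A second, more routine point: ``stability under uniform convergence'' is not automatic for classical $C^2$ solutions; you need interior uniformly-parabolic Schauder estimates on the $u_k$ (available since the operator is uniformly parabolic on compact subsets of $Q_1^+$) to upgrade the convergence to $C^2_{\mathrm{loc}}(Q_1^+)$ before concluding that $u_\infty$ solves the limit equation. With those two repairs, your compactness route should go through; the paper's constructive route has the advantage of producing explicit constants and avoiding the hypothesis-matching needed in the limit.
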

\begin{lemma} [Approximation Lemma] \label{approx_lem}
Let $0 < \alpha < \min \{\frac{2}{2-\gamma}, 1 \}$, and assume $f \in L^{\infty} ( Q^+_1 ) $. Suppose $u \in C^2(Q_1^+) \cap C(\overline{Q^+_1})$ is a solution of \eqref{eq:main} satisfying $u=0$ on $\{X \in \partial_p  Q_1^+ : x_n=0\}$ with $\|u\|_{L^{\infty}(Q_1^+)} \leq 1$. If for any $\varepsilon > 0$,
$$\|a^{ij} - A^{ij}\|_{L^{\infty} (Q^+_1)} \leq  \varepsilon, \quad \|b^i - B^i\|_{L^{\infty} (Q^+_1)} \leq  \varepsilon, \quad \|c - C^-\|_{L^{\infty} (Q^+_1)} \leq  \varepsilon,$$
then there exists a solution $h \in  C^{\infty}(Q^+_{3/4}) \cap C^{\alpha}_s(\overline{Q^+_{3/4}})$ of
\begin{equation*} 
\left\{\begin{aligned}
h_t &= L_0 h  && \mbox{in } Q_{3/4}^+  \\
h &= u &&  \mbox{on } \partial_p Q_{3/4}^+ .
\end{aligned}\right.
\end{equation*}
such that 
$$\|u - h \|_{L^{\infty}(Q_{1/2}^+)} \leq C \left(\varepsilon^{\theta} + \|f\|_{L^{\infty} ( Q^+_1)} \right),$$
where $C>0$ and $\theta \in (0,1)$ are universal constants.
\end{lemma}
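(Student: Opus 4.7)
The plan is to construct $h$ as the solution of the constant-coefficient Dirichlet problem with the trace of $u$, and then control $w = u - h$ by a compactness argument combined with rescaling to extract the H\"older power $\varepsilon^\theta$.

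For the construction, I first upgrade $u$ to a boundary H\"older function. Applying \Cref{bdry_holder} (together with an appropriate rescaling) yields $u \in C_s^{\alpha}(\overline{Q_{7/8}^+})$ with norm bounded by $\|u\|_{L^\infty(Q_1^+)} + \|f\|_{L^\infty(Q_1^+)}$. In particular $g := u|_{\partial_p Q_{3/4}^+}$ lies in $C_s^\alpha$ and vanishes on $\{x_n = 0\}$, so \Cref{gb_holder} produces a unique
\[
	h \in C^\infty(Q_{3/4}^+) \cap C_s^\alpha(\overline{Q_{3/4}^+})
\]
solving $h_t = L_0 h$ in $Q_{3/4}^+$ with $h = g$ on $\partial_p Q_{3/4}^+$; interior smoothness follows from standard regularity for the uniformly parabolic operator $L_0$ on each set $\{x_n \geq \rho\}$. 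The difference $w = u - h$ then satisfies
\[
	w_t - L_0 w = (L - L_0) u + f \ \text{in}\ Q_{3/4}^+, \qquad w = 0\ \text{on}\ \partial_p Q_{3/4}^+ .
\]

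For the estimate on $w$, I would argue by contradiction to obtain qualitative smallness. Suppose there exist $\delta_0 > 0$ and sequences $(u_k, f_k, a^{ij}_k, b^i_k, c_k, A^{ij}_k, B^i_k, C^-_k)$ with $\varepsilon_k := \|a^{ij}_k - A^{ij}_k\|_\infty + \|b^i_k - B^i_k\|_\infty + \|c_k - C^-_k\|_\infty$ and $\|f_k\|_\infty$ tending to zero, yet $\|u_k - h_k\|_{L^\infty(Q_{1/2}^+)} \geq \delta_0$ for the associated constant-coefficient solutions $h_k$. By the uniform $C_s^\alpha$ bounds on $u_k$ and $h_k$ and Arzel\`a--Ascoli, pass to subsequences $u_k \to u_\infty$ and $h_k \to h_\infty$ uniformly on $\overline{Q_{3/4}^+}$; the constants converge to $A^{ij}_\infty, B^i_\infty, C^-_\infty$ defining a limit operator $L_\infty$. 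On each $\{x_n \geq \rho\}$ the operators are uniformly parabolic with smooth coefficients, so standard interior Schauder estimates give uniform $C^{2,\alpha}_{\mathrm{loc}}$ bounds and both limits solve $v_t = L_\infty v$ classically in $\{x_n > 0\}$. They vanish on $\{x_n = 0\}$ and their Dirichlet data on the remainder of $\partial_p Q_{3/4}^+$ coincide in the limit, so \Cref{com_prin} forces $u_\infty \equiv h_\infty$, a contradiction. This yields $\|w\|_{L^\infty(Q_{1/2}^+)} \leq \omega(\varepsilon) + C\|f\|_{L^\infty}$ for some modulus $\omega$, with the linear $f$-dependence inherited from applying \Cref{apriori_bound1} to $w$.

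To upgrade the modulus $\omega$ to the explicit power $\varepsilon^\theta$, iterate the qualitative smallness at a geometric sequence of rescaled cubes: rescaling preserves the structural hypotheses, each scale contributes a fixed decay factor, and combining with the trivial bound $\|w\|_{L^\infty} \leq C$ extracts a universal $\theta \in (0,1)$. The principal obstacle is passing to the limit classically in the compactness step across the degenerate boundary $\{x_n = 0\}$, where uniform $C^2$ estimates up to the boundary are unavailable: one must couple the $C_s^\alpha$ bound from \Cref{bdry_holder} (providing equicontinuity up to $\{x_n = 0\}$) with interior Schauder estimates on $\{x_n \geq \rho\}$ (providing convergence of derivatives needed to pass to the limit in the equation), and verify that all constants and the extracted exponent $\theta$ depend only on the structural data $n, \lambda, \Lambda, \gamma, \alpha$.
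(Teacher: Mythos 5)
Your construction of $h$ via \Cref{bdry_holder} and \Cref{gb_holder} matches the paper exactly, but the heart of the lemma — the quantitative bound with the explicit exponent $\theta$ — is where your argument has a genuine gap, and the paper's proof takes a different, purely quantitative route.

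The gap is the upgrade from the qualitative modulus $\omega(\varepsilon)$ (which your compactness/contradiction step plausibly delivers) to the power $\varepsilon^\theta$. Your claim that ``iterating at a geometric sequence of rescaled cubes'' contributes ``a fixed decay factor per scale'' cannot work here: the coefficient discrepancies $\|a^{ij}-A^{ij}\|_\infty$ are zeroth-order quantities and are \emph{invariant} under the anisotropic rescaling $X\mapsto rX$ used throughout the paper (only the first- and zeroth-order coefficients $b^i,c$ get smaller under rescaling). So the proximity parameter $\varepsilon$ does not improve as you zoom in, and there is no per-scale gain to accumulate. In general a modulus of continuity cannot be promoted to a H\"older exponent without an additional structural mechanism, and none is supplied. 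A second, smaller issue: you write $w_t-L_0w=(L-L_0)u+f$ and then appeal to \Cref{apriori_bound1} to inherit the linear $\|f\|_\infty$ dependence, but the term $(L-L_0)u$ involves second derivatives of $u$ which are not bounded up to $\{x_n=0\}$, so \Cref{apriori_bound1} cannot be applied to this forcing directly.

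The paper avoids compactness entirely and gets $\varepsilon^\theta$ from a direct boundary-layer optimization. It works on the shrunken cube $Q_{3/4-\delta}^+$: on its parabolic boundary one has $|u-h|\le C\delta^\alpha(2+\|f\|_\infty)$ because $u=h$ on $\partial_p Q_{3/4}^+$ and both lie in $C_s^\alpha$; in the interior $Q_{3/4-\delta}^+$ one has the $C_s^{2,\alpha}$ estimate $\|h\|_{C_s^{2,\alpha}}\le C\delta^{-2-\alpha}$ from \Cref{thm:ccf_eq} with scaling. Crucially, the paper writes the equation for $v=u-h$ as $v_t=Lv+\tilde f$ with $\tilde f=(L-L_0)h+f$ (not $(L-L_0)u+f$), so the forcing is controlled by $\varepsilon\|D^2h\|\lesssim\varepsilon\delta^{-2-\alpha}$. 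The maximum-principle bounds (\Cref{apriori_bound1}, \Cref{apriori_bound2}) then give $\|u-h\|\le C(\delta^\alpha+\varepsilon\delta^{-2-\alpha}+\|f\|_\infty)$, and choosing $\delta=\varepsilon^{1/(2(2+\alpha))}$ produces $\theta=\alpha/(2(2+\alpha))$. This is the piece your proposal is missing; replacing the compactness and iteration sketch with this $\delta$-optimization (and switching the forcing to $(L-L_0)h+f$) would complete the argument.
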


\begin{proof}
By \Cref{bdry_holder} and \Cref{gb_holder}, there exists a unique bounded solution $h \in C^2(Q^+_{3/4}) \cap C^{\alpha}_s(\overline{Q^+_{3/4}})$ of 
\begin{equation*} 
\left\{\begin{aligned}
h_t &= L_0 h  && \mbox{in } Q_{3/4}^+  \\
h &= u &&  \mbox{on } \partial_p Q_{3/4}^+ ,
\end{aligned}\right.
\end{equation*}
and 
$$\|h\|_{C_s^{\alpha}(\overline{Q^+_{3/4}})} \leq C \Big(\|h\|_{L^{\infty}(Q_{3/4}^+)} + \|u\|_{C_s^{\alpha}(\overline{Q_{3/4}^+})} \Big) \leq C \left(2 + \|f\|_{L^{\infty}(Q_1^+)} \right).$$
For $X \in \partial_p Q_{3/4 - \delta}^+$, we can choose $Y \in \partial_p Q_{3/4}^+$ such that $s[X,Y] = \delta$ and hence we have
$$|u(X)-h(X)| \leq \|u-h\|_{C^{\alpha}_s(\overline{Q^+_{3/4}})} s[X,Y]^{\alpha} \leq C \delta^{\alpha} \left(2 + \|f\|_{L^{\infty} ( Q^+_1 )} \right).$$
By \Cref{apriori_bound1} and \Cref{thm:ccf_eq}, we have for any $\delta \in (0,1/4)$
$$ \|h\|_{C_s^{2,\alpha}(\overline{Q_{3/4 - \delta}^+})} \leq C \delta^{ -2 - \alpha} \| h \|_{L^{\infty}(Q_{3/4}^+)} \leq  C \delta^{-2-\alpha} \| u \|_{L^{\infty}(Q_{3/4}^+)} \leq C \delta^{-2-\alpha}.$$
Let $v= u - h$, then $v_t = Lv + \tilde{f} $ in $Q_{3/4-\delta}^+$, where 
\begin{align*}
	\tilde{f}&=(a^{i'j'} - A^{i'j'}) h_{i'j'} + 2 x_n^{\gamma/2} (a^{i'n}- A^{i'n}) h_{i'n} + x_n^{\gamma} (a^{nn}- A^{nn}) h_{nn} \\
	& \qquad\qquad\qquad\quad + (b^{i'} - B^{i'}) h_{i'} + x_n^{\gamma/2} (b^n - B^n) h_{n} + (c-C^-) h + f.
\end{align*}
By \Cref{apriori_bound1} and \Cref{apriori_bound2} with scaling argument,
\begin{align*}
	\|u-h\|_{L^{\infty}(Q_{3/4-\delta}^+)}  &\leq 
	\begin{cases}
	C \left(\|u-h\|_{L^{\infty}(\partial_p Q_{3/4-\delta}^+)} + \|\tilde{f}\|_{L^{\infty}(Q_{3/4-\delta}^+)} \right) & (0<\gamma<1) \\
	C\left(\|u-h\|_{L^{\infty}(\partial_p Q_{3/4-\delta}^+)} + \|x_n^{-\gamma}\tilde{f}\|_{L^{\infty}(Q_{3/4-\delta}^+)} \right)& (\gamma <0)
	\end{cases}\\
	&\leq C\left( \delta^{\alpha} (2 + \|f\|_{L^{\infty} (Q^+_1)} ) + \varepsilon \delta^{-2-\alpha} + \|f\|_{L^{\infty} ( Q^+_1 )} \right) .
\end{align*}
Take $\delta = \varepsilon^{\frac{1}{2(2+\alpha)}}$ and $\theta = \frac{\alpha}{2(2+\alpha)}$, we conclude that
$$\|u-h\|_{L^{\infty}(Q_{3/4-\delta}^+)}  \leq C (\varepsilon^\theta + \|f\|_{L^{\infty} ( Q^+_1 )} ).$$
\end{proof}
\begin{lemma} \label{bdry_c2a_vc}
Let $0 < \alpha < \min \{\frac{2}{2-\gamma}, 1 \}$ with $2+\alpha \notin \mathcal{D}$, and assume 
$$a^{ij}, \, b^i, \, c, \, f\in C^{\alpha}_s(\overline{Q_1^+}) \quad (i,j=1,2,\cdots,n).$$
Suppose $u \in  C^2 ( Q_1^+ ) \cap C (\overline{Q_1^+})$ is a solution of \eqref{eq:main} satisfying $u=0$ on $\{X \in \partial_p  Q_1^+ : x_n=0\}$. Then there exists an $s$-polynomial $p$ of degree $m$ corresponding to $\kappa \coloneqq 2 + \alpha$ at $O$ such that
\begin{equation*}
|u(X) - p(X) |\leq C \left( \|u\|_{L^\infty(Q^+_1)}+\|f\|_{C^{\alpha}_s(\overline{Q^+_1})} \right) s[X,O]^{2 + \alpha} \quad \mbox{for all } X \in \overline{Q_1^+},
\end{equation*}
where $C$ is a positive constant depending only on $n$, $\lambda$, $\Lambda$, $\gamma$, $\alpha$, $\|a^{ij}\|_{C^{\alpha}_s(\overline{Q^+_1})}$, $\|b^{i}\|_{C^{\alpha}_s(\overline{Q^+_1})}$, and $\|c\|_{C^{\alpha}_s(\overline{Q^+_1})}$.
\end{lemma}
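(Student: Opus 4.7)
\textbf{Proof proposal for \Cref{bdry_c2a_vc}.} The plan is to prove this by a standard iteration/compactness scheme, combining the \emph{Approximation Lemma} (\Cref{approx_lem}) with the constant-coefficient $C^{2+\alpha}_s$-expansion (\Cref{lem:ccf_hi_nzf}). After dividing $u$ by a suitable normalizing constant, one may assume $\|u\|_{L^\infty(Q_1^+)}\le 1$ and $\|f\|_{C^\alpha_s(\overline{Q_1^+})}\le\varepsilon_0$ for a small universal $\varepsilon_0>0$ to be chosen. Since the coefficients $a^{ij},b^i,c$ are $C^\alpha_s$, on a small intrinsic cube $Q_\rho^+$ we have $\|a^{ij}-a^{ij}(O)\|_{L^\infty(Q_\rho^+)}\le C\rho^\alpha$, and similarly for $b^i,c$; thus after rescaling the problem on $Q_\rho^+$ to $Q_1^+$ we can make the coefficients arbitrarily close to the constants $A^{ij}=a^{ij}(O)$, $B^i=b^i(O)$, $C^-=c(O)$, so that the smallness hypothesis of \Cref{approx_lem} holds.

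The iteration will produce a sequence of $s$-polynomials $\{P_k\}$ of degree $m$ corresponding to $\kappa=2+\alpha$ at $O$ and a geometric factor $r\in(0,1/2]$ such that
\begin{equation*}
|u(X)-P_k(X)|\le r^{k(2+\alpha)}\quad\text{for all }X\in\overline{Q_{r^k}^+},
\end{equation*}
together with a Cauchy-type estimate on the coefficients of $P_k-P_{k-1}$. The inductive step is as follows: given $P_{k-1}$, define the rescaled remainder $v_k(X)=(u-P_{k-1})(r^{k-1}X)/r^{(k-1)(2+\alpha)}$. Then $v_k$ solves an equation of the form \eqref{eq:main} in $Q_1^+$ with rescaled coefficients close to $A^{ij},B^i,C^-$ (by the $C^\alpha_s$-continuity and the choice of $r$) and with forcing term of $L^\infty$-size at most $C\varepsilon_0$, and $\|v_k\|_{L^\infty(Q_1^+)}\le 1$. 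Applying \Cref{approx_lem} produces $h_k$ solving $h_{k,t}=L_0 h_k$ on $Q_{3/4}^+$ with $\|v_k-h_k\|_{L^\infty(Q_{1/2}^+)}\le C(\varepsilon^\theta+\varepsilon_0)$, and \Cref{lem:ccf_hi_nzf} (with $k=0$) yields an $s$-polynomial $q_k$ of degree $m$ corresponding to $\kappa=2+\alpha$ such that $|h_k(X)-q_k(X)|\le C s[X,O]^{2+\alpha}$. Unscaling, we set $P_k(X)=P_{k-1}(X)+r^{(k-1)(2+\alpha)}q_k(r^{-(k-1)}X)$; after choosing $r$ small so that $Cr^{2+\alpha}<\frac{1}{2}r^{2+\alpha}$ and then $\varepsilon_0,\varepsilon$ small enough, the approximation error closes up to yield $|u-P_k|\le r^{k(2+\alpha)}$ on $\overline{Q_{r^k}^+}$.

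To pass to the limit, observe that the rescaled difference $(P_k-P_{k-1})(r^{k-1}X)/r^{(k-1)(2+\alpha)}=q_k$ is an $s$-polynomial of degree $m$ bounded by $Cr^{2+\alpha}$ in $Q_1^+$; comparing coefficients (which are controlled by the $L^\infty$ norm on $Q_1^+$ since $s$-polynomials are finite-dimensional) gives that each coefficient of $P_k-P_{k-1}$ decays geometrically once the intrinsic scaling is taken into account. Hence $P_k$ converges uniformly on compact sets to an $s$-polynomial $p$ of degree $m$ corresponding to $\kappa=2+\alpha$ at $O$, satisfying $|u(X)-p(X)|\le C s[X,O]^{2+\alpha}$ for $X\in\overline{Q_1^+}$ (the estimate for $s[X,O]\ge r^N$ with $N$ chosen so that $r^{N+1}\le s[X,O]\le r^N$ follows directly from the iteration, and for $s[X,O]\ge 1/2$ it follows from $\|u\|_{L^\infty}\le 1$ and the bound on $\|p\|$).

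The main obstacle I expect is the bookkeeping in the inductive step: one must simultaneously (i) verify that the coefficients of the rescaled operator satisfy the smallness hypothesis of \Cref{approx_lem} uniformly in $k$, which relies on the invariance of the intrinsic scaling and the $C^\alpha_s$-regularity of the coefficients; (ii) check that the rescaled forcing term still has $L^\infty$-norm controlled by $C\varepsilon_0$, using the $C^\alpha_s$-regularity of $f$; and (iii) choose $r$ first (from the constant-coefficient factor $C$ in \Cref{lem:ccf_hi_nzf}), then choose $\varepsilon_0$ and the corresponding $\varepsilon$ in \Cref{approx_lem} in the correct order so that the estimate $r^{k(2+\alpha)}$ is reproduced at each step. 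The hypothesis $2+\alpha\notin\mathcal D$ is needed so that the degree $m$ of the $s$-polynomial expansion produced by \Cref{lem:ccf_hi_nzf} is well-defined and the summation of the geometric series avoids the resonant case.
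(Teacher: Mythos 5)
Your overall strategy — iterate via the approximation lemma, subtract constant-coefficient comparison solutions, and pass to the limit — is exactly the approach in the paper. But there are two concrete gaps in the way you set up the inductive step.

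First, you invoke \Cref{lem:ccf_hi_nzf} to produce the $s$-polynomial $q_k$ with $|h_k(X)-q_k(X)|\le Cs[X,O]^{2+\alpha}$, which is the \emph{wrong} lemma and gives an exponent that is too small to close the iteration. You then write ``choose $r$ small so that $Cr^{2+\alpha}<\frac{1}{2}r^{2+\alpha}$,'' which is impossible since the universal constant $C$ coming from a Schauder estimate is $\ge 1$. The needed ingredient is \Cref{lem:ccf_hi_zf}: since each $h_k$ solves the \emph{homogeneous} constant-coefficient equation, that lemma gives an $s$-polynomial $q_k$ of degree $N+\tfrac{2}{2-\gamma}$ with $|h_k-q_k|\le Cs[X,O]^{N+1+\frac{2}{2-\gamma}}$, and because $2+\alpha\notin\mathcal D$ one can pick the unique integer $N$ with $1+\alpha-\tfrac{2}{2-\gamma}<N<2+\alpha-\tfrac{2}{2-\gamma}$, so that $N+\tfrac{2}{2-\gamma}<2+\alpha$ (the degree is admissible) and $N+1+\tfrac{2}{2-\gamma}>2+\alpha$ (the exponent strictly beats $2+\alpha$). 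Only then can $r$ be chosen so that $Cr^{N+1+\frac{2}{2-\gamma}}\le\tfrac{1}{2}r^{2+\alpha}$, which is the correct version of your inequality; this is precisely what makes the geometric sum converge in the passage to the limit (compare the estimate leading to \eqref{est_u-h-pl} in the proof of \Cref{lem:ccf_hi_nzf}, which the paper's proof of this lemma references).

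Second, you assert without proof that the rescaled forcing term has $L^\infty$-norm at most $C\varepsilon_0$, but the forcing term after rescaling is $\tilde f^{k}(X)=r^{-(k-1)\alpha}\big(f+Lh^{k-1}-L_0h^{k-1}\big)(r^{k-1}X)$, and the term $r^{-(k-1)\alpha}f(r^{k-1}X)$ contains $r^{-(k-1)\alpha}f(O)$, which blows up unless $f(O)=0$. The paper removes this obstruction before the iteration begins by replacing $u$ with $\tilde u(X)=u(X)+\frac{f(O)}{(2-\gamma)(1-\gamma)}x_n^{2-\gamma}$ (or the log variant when $\gamma=1$), which amounts to absorbing the value $f(O)$ into an explicit $s$-polynomial of degree $2-\gamma<2+\alpha$ and hence into the final answer; your proposal skips this normalization. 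Apart from these two points, your scheme coincides with the paper's: after fixing them, the remaining bookkeeping (uniform smallness of the rescaled coefficients, geometric decay of the $s$-polynomial coefficients, summation of the series) goes through as you sketch.
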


\begin{proof}
By considering $u/(\|u\|_{L^\infty(Q^+_1)} + \varepsilon^{-1} \|f\|_{C^{\alpha}_s(\overline{Q^+_1})} )$ for sufficiently small $\varepsilon >0$, we may assume that $\|u\|_{L^\infty(Q^+_1)}\leq1$ and $\|f\|_{C^{\alpha}_s(\overline{Q^+_1})} \leq \varepsilon$. By scaling we also assume that $[ a^{ij} ]_{C^{\alpha}_s(\overline{Q^+_1})} \leq \varepsilon$, $[ b^i ]_{C^{\alpha}_s(\overline{Q^+_1})} \leq \varepsilon$, $[ c ]_{C^{\alpha}_s(\overline{Q^+_1})} \leq \varepsilon,$ and by the H\"older continuity of coefficients, we have
\begin{equation} \label{hol_coeff}
	\left\{ \begin{aligned}
		|a^{ij}(X) - A^{ij}| & \leq [ a^{ij} ]_{C^{\alpha}_s(\overline{Q^+_1})} s[X,O]^{\alpha} \leq \varepsilon \\
		|b^{i}(X) - B^{i}| & \leq [ b^i ]_{C^{\alpha}_s(\overline{Q^+_1})} s[X,O]^{\alpha}  \leq \varepsilon \\
		|c(X) - C^-| & \leq [ c ]_{C^{\alpha}_s(\overline{Q^+_1})} s[X,O]^{\alpha}  \leq \varepsilon 
	\end{aligned} \right. 
\end{equation}
for all $X \in \overline{Q_1^+}$. By considering 
\begin{equation*}
	\tilde{u}(X) = u(X) + \begin{dcases} 
		\frac{f(O)}{(2-\gamma)(1-\gamma)}x_n^{2-\gamma} & (\gamma<1)\\
		f(O) x_n \log x_n & (\gamma=1),
	\end{dcases}
\end{equation*}
we may assume $f(O)=0$. By \Cref{approx_lem}, there exists a bounded solutions $h^1 \in C^{\infty}(Q_{3/4}^+) \cap C^{\alpha}_s(\overline{Q^+_{3/4}})$ of 
\begin{equation*} 
	\left\{\begin{aligned}
		h_t^1  &= L_0 h^1  && \mbox{in } Q_{3/4}^+  \\
		h^1  &= u &&  \mbox{on } \partial_p Q_{3/4}^+ 
	\end{aligned}\right.
\end{equation*}
such that
\begin{equation*}
\|u-h^1\|_{L^{\infty}(Q_{1/2}^+)}  \leq C \left(\varepsilon^\theta + \|f\|_{L^{\infty} ( Q^+_1 )} \right) \leq 2C \varepsilon^{\theta}.
\end{equation*}

By \Cref{thm:ccf_eq}, $h^1 \in C_s^{2+\alpha}(\overline{Q_{1/2}^+})$ and 
\begin{equation} \label{h1_c2a}
	\|h^1\|_{C_s^{2+\alpha}(\overline{Q_{1/2}^+})} \leq C, 
\end{equation}
where $C$ is a universal constant. Let $r \in (0,1/2)$ be a fixed number and choose now $\varepsilon$ small enough such that $2C\varepsilon^{\theta} <r^{2+\alpha}$. Then, 
\begin{equation} \label{norm_u2}
	\|u-h^1\|_{L^{\infty}(Q_r^+)} \leq r^{2+\alpha}.
\end{equation}
Let us now define
\begin{equation*}
	u^2(X) \coloneqq \frac{(u-h^1)(rX)}{r^{2+\alpha}} \qquad \text{and} \qquad \tilde{f}^2(X) \coloneqq \frac{(f+Lh^1-L_0 h^1)(rX)}{r^{\alpha}}.
\end{equation*}
Then, $\|u^2\|_{L^{\infty}(Q_1^+)}\leq 1$ and $u_t^2 = L^2 u^2 + \tilde{f}^2$ in $ Q_1^+$, where 
\begin{align*}
	L^2 &= a^{i'j'}(rX)D_{i'j'} + 2x_n^{\gamma/2} a^{i'n} (rX)D_{i'n} +x_n^{\gamma}a^{nn}(rX) D_{nn} \\
	&\qquad\qquad\qquad\qquad\qquad\qquad + rb^{i'}(rX) D_{i'} + r x_n^{\gamma/2}b^{n}(rX) D_n + r^2 c(rX).
\end{align*}
From \eqref{hol_coeff}, \eqref{h1_c2a}, and \eqref{norm_u2}, we can see that the same hypotheses as before are fulfilled. Replacing the decomposition of solutions with \Cref{approx_lem}, as in the proof of \Cref{lem:ccf_hi_nzf}, we can show that there exists an $s$-polynomial $p^{\infty}$ of degree $m$ corresponding to $(2+\alpha)$ at $O$ such that
$$|u(X) -p^{\infty}(X)| \leq Cs[X,O]^{2+\alpha} \quad \mbox{for all } X \in \overline{Q_{1}^+}.$$
\end{proof}
Satisfying the assumptions of \Cref{bdry_c2a_vc}, we know that $u \in C_s^{\tilde{\alpha}} (\overline{Q_{1/2}^+})$ for any $0 < \tilde{\alpha} <1$. Applying the approximation lemma again to $ \tilde{\alpha}$, we have $u \in C_s^{2+\tilde{\alpha}} (\overline{Q_{1/2}^+})$  for any $0 < \tilde{\alpha} <1$ with $2+\tilde{\alpha} \notin \mathcal{D}$.
\subsection{Higher regularity of solutions for equations with $s$-polynomial coefficients} 
In the previous section, $C^{2+\alpha}_s$-regularity can be obtained by freezing coefficients method. However due to the degeneracy/singular order of \eqref{eq:main}, higher regularity cannot be obtained inductively like the classical bootstrap method. We solve this problem by considering freezing coefficients by $s$-polynomial. In this section, we consider equations of the form
\begin{equation} \label{eq:poly_coeff}
u_t  = L_p u + f ,
\end{equation}
where the operator $L_p$ is given as
\begin{align*}
	L_p  &= P^{i'j'}(X) D_{i'j'} + 2 x_n^{\gamma/2} P^{i'n}(X) D_{i'n} + x_n^{\gamma} P^{nn} (X) D_{nn} \\
	& \qquad\qquad\qquad\qquad\qquad\qquad  + Q^{i'}(X) D_{i'} + x_n^{\gamma/2} Q^n(X) D_{n} + R (X)   
\end{align*}
and the coefficients $P^{ij}$ are $s$-polynomials at $O$ satisfying the following condition
\begin{equation*} 
\lambda |\xi|^2 \leq P^{ij}(X) \xi_i \xi_j \leq \Lambda |\xi|^2 \quad \text{for any } X \in Q_1^+, ~ \xi \in \mathbb R^n
\end{equation*}
and the coefficients $Q^i,\cdots, Q^n, R$ are $s$-polynomials at $O$ satisfying
\begin{equation*}
	\sum_{i=1}^n \|Q^i\|_{L^{\infty}(Q_1^+)} +\|R\|_{L^{\infty}(Q_1^+)}  \leq \Lambda  \qquad \text{and} \qquad R(X) < 0 \quad \text{for any } X \in Q_1^+.
\end{equation*}

\begin{lemma} \label{calc_TBT}
Let $k \in \mathbb{N}_{0}$, $0<\alpha<1$ and assume that the coefficients $P^{ij}$, $Q^i$, and $R$ are $s$-polynomials at $O$ of degree $\mu$ corresponding to $(k+\alpha)$ and the function $f \in C_s^{k,\alpha}(\overline{Q_1^+})$ is of the form 
\begin{equation}  \label{expan_f}
f (X) = 
\begin{dcases}
\sum_{ \substack { \frac{2i}{2-\gamma} + j <  k+\alpha \\ i,j \in \mathbb{N}_0 }} \tilde{f}^{ij}(x',t) x_n^{i+\frac{2-\gamma}{2}j} & (  \gamma < 1) \\
\sum_{ \substack{ i \leq j < k+\alpha \\ i,j \in \mathbb{N}_0 } } \tilde{f}^{ij}(x',t) (\log x_n)^i x_n^{j/2} & (  \gamma = 1) ,
\end{dcases}
\end{equation}
where each $\tilde{f}^{ij}$ is a smooth function for $(x',t)$. Then there is a function $h  \in C_s^{k+2,\alpha}(\overline{Q_1^+})$ of the form
\begin{equation} \label{expan_h}
h(X) = \begin{dcases}
\sum_{ \substack {\frac{2i}{2-\gamma} + j < k+\alpha \\ i,j \in \mathbb{N}_0}} \tilde{h}^{ij}(x',t) x_n^{i+\frac{2-\gamma}{2} (j+2)} & (  \gamma < 1) \\
\sum_{ \substack{ i \leq j  < k+\alpha \\  i,j \in \mathbb{N}_0} } \tilde{h}^{ij}(x',t) (\log x_n)^{i+1}  x_n^{(j+2)/2} & (  \gamma = 1) 
\end{dcases}
\end{equation}	
such that
\begin{equation*} 
	|(h_t - L_p h - f) (X) | \leq C \|f\|_{C^{k,\alpha}_s(\overline{Q_1^+})}x_n^{\frac{2-\gamma}{2}(k+\alpha)}\quad \text{for all } X \in \overline{Q_1^+}
\end{equation*}
and
\begin{equation*} 
	\|h\|_{C^{k+2,\alpha}_s(\overline{Q_1^+})} \leq C \|f\|_{C^{k,\alpha}_s(\overline{Q_1^+})},
\end{equation*}
where each $\tilde{h}^{ij}$ is a smooth function for $(x',t)$ and $C$ depends on $n$, $\gamma$, $k$, $\alpha$, $\|P^{ij}\|_{C^{k,\alpha}_s(\overline{Q_1^+})}$, $\|Q^{i}\|_{C^{k,\alpha}_s(\overline{Q_1^+})}$, and $\|R\|_{C^{k,\alpha}_s(\overline{Q_1^+})}$.
\end{lemma}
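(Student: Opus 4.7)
The strategy is to determine the coefficient functions $\tilde{h}^{ij}(x',t)$ inductively by matching the $x_n$-expansion of $h_t - L_p h$ against that of $f$, paralleling the inductive construction of the functions $U^l$ in Lemma~\ref{lem:ccf_hi_zf}. For $\gamma<1$ I would order the pairs $(i,j)$ by the weight $\sigma_{ij}=i+\tfrac{2-\gamma}{2}j$ in increasing order; for $\gamma=1$ I would order first by $j$ and then, within each fixed $j$-level, by $i$ in decreasing order. At each step, $\tilde{h}^{ij}$ is solved for in terms of $\tilde{f}^{ij}$ together with contributions coming from previously determined coefficients.

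For $\gamma<1$, a direct calculation shows that the leading contribution of $(\partial_t - L_p)$ applied to $\tilde{h}^{ij}(x',t)\, x_n^{i+\tfrac{2-\gamma}{2}(j+2)}$ comes from the term $-x_n^\gamma P^{nn}(O) D_{nn}$ and equals
\[
-\bigl[i+\tfrac{2-\gamma}{2}(j+2)\bigr]\bigl[i-1+\tfrac{2-\gamma}{2}(j+2)\bigr] P^{nn}(O)\, \tilde{h}^{ij}(x',t)\, x_n^{i+\tfrac{2-\gamma}{2}j},
\]
which has exactly the $x_n$-order of $\tilde{f}^{ij}(x',t)\, x_n^{i+\tfrac{2-\gamma}{2}j}$. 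The indicial coefficient is nonzero because $i+\tfrac{2-\gamma}{2}(j+2)\geq 2-\gamma>1$, so $\tilde{h}^{ij}$ can be solved uniquely in terms of $\tilde{f}^{ij}$ and contributions of lower weight arising from the perturbation $L_p - L_0$ (whose coefficients are $s$-polynomials vanishing at $O$), the first- and zero-order terms of $L_p$, and the $(x',t)$-derivatives of previously determined $\tilde{h}^{i'j'}$. All such contributions involve only $\tilde{h}^{i'j'}$ with $\sigma_{i'j'}<\sigma_{ij}$, so the induction closes; the residual $h_t - L_p h - f$ then consists only of monomials of weight $\geq k+\alpha$, yielding the stated pointwise bound.

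For $\gamma=1$ the analogous indicial coefficient $(i+1+\tfrac{j}{2})(i+\tfrac{j}{2})$ degenerates at $i=j=0$, which is precisely why the ansatz carries the extra logarithmic factor $(\log x_n)^{i+1}$. A direct computation gives
\[
-x_n D_{nn}\bigl[(\log x_n)^{i+1} x_n^{(j+2)/2}\bigr] = -\bigl[(i+1)i(\log x_n)^{i-1} + (i+1)(j+1)(\log x_n)^{i} + \tfrac{j(j+2)}{4}(\log x_n)^{i+1}\bigr] x_n^{j/2},
\]
so the new leading indicial coefficient $(i+1)(j+1) P^{nn}(O)$ never vanishes. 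The $(\log x_n)^{i-1}$ tail is absorbed by matching against lower-$i$ coefficients within the same $j$-level, which forces the descending-$i$ ordering; the spurious $(\log x_n)^{i+1}$ tail vanishes when $j=0$ and, together with the index constraint $i\leq j<k+\alpha$, contributes only to terms of higher weight that fall into the residual.

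The bound $\|h\|_{C^{k+2,\alpha}_s}\leq C\|f\|_{C^{k,\alpha}_s}$ will follow because each $\tilde{h}^{ij}$ is expressed as a finite linear combination of the $\tilde{f}^{i'j'}$ and their $(x',t)$-derivatives, with coefficients controlled by the prescribed norms of $P^{ij}$, $Q^i$, and $R$; the prescribed expansion form of $h$ then transfers the $C^{k,\alpha}$-estimates of the $\tilde{h}^{ij}$ in $(x',t)$ into a $C^{k+2,\alpha}_s$-estimate for $h$. The main obstacle is the $\gamma=1$ case, where the logarithmic factors generate a cascade of cross-terms between different log-powers; these must be disentangled by the correct ordering and one must verify that the constraint $i\leq j$ really does suffice to push all spurious log-tails into the higher-order residual.
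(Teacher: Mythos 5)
Your $\gamma<1$ argument is essentially the paper's (the paper orders first by $j$, then by $i$, while you order by the weight $\sigma_{ij}$, but both are linear extensions of the underlying partial order $(i,j)\leq(l,m)$, so both close the induction). One small imprecision there: the equation for $\tilde{h}^{lm}$ should be divided by the non-vanishing \emph{polynomial} $P^{nn}(x',0,t)$, not by the constant $P^{nn}(O)$; if you freeze at $O$ and push $P^{nn}(x',0,t)-P^{nn}(O)$ into the perturbation, that perturbation still multiplies $\bar{D}_{nn}$ of the \emph{same} monomial $\tilde{h}^{lm}\,x_n^{l+\frac{2-\gamma}{2}(m+2)}$ (it depends only on $(x',t)$ and does not raise the $x_n$-power), so $\tilde{h}^{lm}$ reappears on the ``known'' side. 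This is easily repaired, but your statement that $L_p-L_0$ contributes only lower weight is not quite right as written.

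The real problem is the $\gamma=1$ case, where your descending-$i$ sequential scheme does not close. Your own formula shows that $-x_nD_{nn}$ applied to $\tilde{h}^{ij}(\log x_n)^{i+1}x_n^{(j+2)/2}$ produces three terms at the \emph{same} $x_n$-level $x_n^{j/2}$, namely $(\log x_n)^{i-1}$, $(\log x_n)^{i}$, and $(\log x_n)^{i+1}$. The last, $(\log x_n)^{i+1}x_n^{j/2}$, has the same $x_n$-power $j/2$ as the term you are matching, so it contributes to $g^{(i+1)\,j}$ at the \emph{same} level $j$ — it does not become ``higher weight'' and cannot be relegated to the residual. Concretely, the equation $g^{lm}=\tilde{f}^{lm}$ couples $\tilde{h}^{(l-1)m}$, $\tilde{h}^{lm}$, and $\tilde{h}^{(l+1)m}$: the coefficient of $\tilde{h}^{(l-1)m}$ is $-\tfrac{m(m+2)}{4}P^{nn}(x',0,t)$, which is nonzero for every $m\ge1$. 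So at the top of your descending order, the single equation $g^{mm}=\tilde{f}^{mm}$ already contains the two unknowns $\tilde{h}^{mm}$ and $\tilde{h}^{(m-1)m}$ and cannot be solved by itself; ascending order fails symmetrically at $l=0$. The system for $(\tilde{h}^{0m},\ldots,\tilde{h}^{mm})$ is genuinely tridiagonal, not triangular in either direction. The paper handles this by assembling, for each fixed $m$, a tridiagonal matrix $T^m$ (with diagonal $(l+1)(m+1)$, super-diagonal $(l+1)(l+2)$, sub-diagonal $\tfrac{1}{4}m(m+2)$) and inverting it simultaneously for all $l=0,\ldots,m$, rather than solving coefficient-by-coefficient. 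So the ``cascade of cross-terms'' you correctly worried about at the end of your proposal is exactly the obstruction, and the resolution is a linear-algebra step (invertibility of $T^m$), not a clever ordering. You would also benefit from the paper's auxiliary-variable trick $z=\log x_n$ and the modified operator $\bar{D}_{nn}^*$, which makes the tridiagonal structure transparent and keeps the bookkeeping of $\log$-powers and $x_n$-powers from tangling.
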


\begin{proof}
If $f \equiv 0$, obviously the function we are looking for is $h \equiv 0$. Thus, it is sufficient to consider only the case $f \not \equiv 0$, and $\|f\|_{C^{k,\alpha}_s(\overline{Q_1^+})}=1$ can be assumed. The desired function $h$ can be obtained by comparing $ (h_t - L_p h)$ and $f$ by terms for $x_n$ of both sides. If the operator $(\partial_t - L_p)$ is applied to \eqref{expan_h}, then $ (h_t-L_p h)$ is expressed as follows:   
\begin{equation*}
( h_t - L_p h) (X) =
\begin{dcases}
\sum_{\substack { \frac{2i}{2-\gamma} + j <  k+\alpha+\mu+2 \\ i,j \in \mathbb{N}_0 } } g^{ij} (x',t) x_n^{i+\frac{2-\gamma}{2}j} & (  \gamma < 1) \\
\sum_{\substack {i \leq j <  k+\alpha+\mu+2 \\ i,j \in \mathbb{N}_0 } } g^{ij} (x',t) (\log x_n)^i x_n^{j/2} & (  \gamma = 1), 
\end{dcases}
\end{equation*}
where each $g^{ij}$ is an unknown smooth function 

We will find $\tilde{h}^{ij}$ inductively from the process of tracing $g^{ij}$. Let us introduce some symbols for computational convenience. We add an auxiliary spacial dimension like
\begin{equation*}
	x^\ast =(x_1, \cdots, x_n, z) \in \mathbb{R}^{n+1} \qquad \text{and} \qquad X^* = (x^*, t).
\end{equation*}
Also, we define a modified version of the function $u$ and the operator $L_p$ as follows: The modified function $u^*$ of $u$ is given by the expression
\begin{equation*}  
u^*(X^*)=
\begin{dcases}
	\sum_{i,j \in \mathbb{N}_0} \tilde{u}^{ij}(x',t)  z^i x_n^{\frac{2-\gamma}{2}j} & (\gamma < 1) \\
	\sum_{i,j \in \mathbb{N}_0} \tilde{u}^{ij}(x',t)  z^i x_n^{j/2}  & (\gamma = 1)
\end{dcases} \text{ for }
u(X)=
\begin{dcases}
	\sum_{i,j \in \mathbb{N}_0} \tilde{u}^{ij}(x',t)  x_n^{i + \frac{2-\gamma}{2}j} & (\gamma < 1) \\
	\sum_{i,j \in \mathbb{N}_0} \tilde{u}^{ij}(x',t)  (\log x_n)^i x_n^{j/2}  & (\gamma = 1)
\end{dcases}
\end{equation*}
and the modified operator $L_p^*$ of $L_p$ is given by the expression 
\begin{equation*}
\begin{aligned}
	L_p^*&= {P^{i'j'}}^* D_{i'j'} + 2 {P^{i'n}}^* D_{i'} {\bar{D}}^*_n +  {P^{nn}}^* \bar{D}_{nn}^*  + {Q^{i'}}^* D_{i'} +  {Q^n}^* {\bar{D}}_n^* + R^* ,
\end{aligned}
\end{equation*}
where 
\begin{equation*}  
	{\bar{D}}_n^* = 
	\begin{dcases}
		x_n^{\gamma/2} D_n + z x_n^{-\frac{2-\gamma}{2}} \partial_z & (\gamma<1) \\ 
		x_n^{1/2} D_n + x_n^{-1/2} \partial_z & (\gamma = 1),
	\end{dcases} \quad  
	\bar{D}_{nn}^*  =  
	\begin{dcases}
			x_n^{\gamma} D_n^2 + 2 z x_n^{\gamma-1} D_n \partial_z + z^2 x_n^{\gamma-2} \partial_z^2 & (\gamma <1) \\
			x_n D_n^2 + 2 D_n \partial_z + x_n^{-1} \partial_z^2 - x_n^{-1} \partial_z & (\gamma = 1).
	\end{dcases}
\end{equation*}
Then, we can revert the modified version to the original version as follows:
\begin{equation*} 
	(u_t - L_p u) (X) = 
	\begin{dcases}
		(u_t^* - L_p^* u^*) (X^*) \bigg|_{z=x_n} & (\gamma < 1) \\
		(u_t^* - L_p^* u^*) (X^*) \bigg|_{z=\log x_n} & (\gamma = 1),
	\end{dcases} \quad
	u(X) = 
	\begin{dcases}
		u^*(X^*)\bigg|_{z=x_n} & (\gamma < 1)  \\
		u^*(X^*)\bigg|_{z=\log x_n} & (\gamma = 1) .
	\end{dcases}
\end{equation*}
It is enough to find $\tilde{h}^{ij}$ in the modified version. For arbitrary fixed integers $l,m \in \mathbb{N}_0$, the function $g^{lm}$ in $( h_t^* - L_p^* h^*) $ is the sum of the coefficient functions of 
\begin{equation} \label{fix_id_mono}
	 z^l x_n^{\frac{2-\gamma}{2} m} 
\end{equation}
in the functions obtained by expanding the following operators
\begin{equation} \label{op_mono}
( h_t^* - L_p^* h^*) (X^*) =
\begin{dcases}
	\sum_{ \substack {\frac{2i}{2-\gamma} + j < k+\alpha \\ i,j \in \mathbb{N}_0}} (\partial_t - L_p^*) \left( \tilde{h}^{ij} (x',t) z^i x_n^{\frac{2-\gamma}{2} (j+2)} \right) & (\gamma <1) \\
	\sum_{ \substack {i \leq j < k+\alpha \\ i,j \in \mathbb{N}_0}} (\partial_t - L_p^*) \left( \tilde{h}^{ij} (x',t)z^{i+1} x_n^{(j+2)/2} \right) & (\gamma =1).
\end{dcases}
\end{equation}
We consider several cases according to the range of $i,j$. The first case is when $j  > m$. Then, in the expansion of \eqref{op_mono}, each monomial for $(z,x_n)$ has the order of $x_n$ greater than $m$. Thus,  the expansion of \eqref{op_mono}  cannot have the monomial for $(z,x_n)$ of the form \eqref{fix_id_mono} and hence the coefficient function of \eqref{fix_id_mono} in \eqref{op_mono} is identically zero. 

The second case is when $j=m$. For $\gamma<1$,  the expansion of \eqref{op_mono}  cannot have the monomial for $(z,x_n)$ of the form \eqref{fix_id_mono} if $i > l$. Also, $\tilde{h}^{lm}$ will be obtained by inductively finding functions $\tilde{h}^{0m}$, $\tilde{h}^{1m}$, $\tilde{h}^{2m}$, $\cdots$, $\tilde{h}^{(l-1)m}$, it is sufficient to consider only the case $i=l$ for now. In this case, in the expansion of \eqref{op_mono}, the coefficient function for the monomial of the form \eqref{fix_id_mono} can be obtained as follows:  
\begin{align*}
- {P^{nn}}^*(x',0,0,t) \bar{D}_{nn}^* \left( \tilde{h}^{lm}(x',t)  z^l x_n^{\frac{2-\gamma}{2} (m+2)}  \right)  = S^{lm}(x',t)  \tilde{h}^{lm}(x',t) z^l x_n^{\frac{2-\gamma}{2}m},
\end{align*}
where
\begin{equation*}	
	S^{lm}(x',t) =-\Big( l+ \frac{2-\gamma}{2}(m+2) \Big) \Big( l+ \frac{2-\gamma}{2}(m+2)-1 \Big) {P^{nn}}(x',0,t).
\end{equation*}
For $\gamma =1$, we will show that the functions $\tilde{h}^{0m}$, $\tilde{h}^{1m}$, $\tilde{h}^{2m}$, $\cdots$, $\tilde{h}^{mm} $ are solutions of some system of equations. For $l=0,1,\cdots,m$ and $m \ne 0$, in the expansion of \eqref{op_mono}, the coefficient function for the monomial of the form \eqref{fix_id_mono} can be obtained as follows:  
\begin{align}  
&\left\{
\begin{aligned}
	&-{P^{nn}}^*(x',0,0,t) \left[ (m+1) \tilde{h}^{0m} + 2 \tilde{h}^{1m} \right] x_n^{m/2} && ( l = 0 ) \\
	&-{P^{nn}}^*(x',0,0,t) \left[ \frac{1}{4} m(m+2)  \tilde{h}^{(l-1)m} \right.\\
	&\qquad\qquad\quad  \left.+ (l+1)(m+1) \tilde{h}^{lm} + (l+1)(l+2) \tilde{h}^{(l+1)m} \right] z^l x_n^{m/2}  && ( 1 \leq l  < m ) \\
	& - {P^{nn}}^*(x',0,0,t) \left[ \frac{1}{4} m(m+2)  \tilde{h}^{(m-1)m} + (m+1)^2 \tilde{h}^{mm}  \right]  z^m x_n^{m/2}  && (l=m ).
\end{aligned}\right. \label{rrel_3var}
\end{align}

The last case is when $j<m$. Since we will find $\tilde{h}^{lm}$ inductively, it may be assumed that all of the functions
\begin{equation}\label{pre_hlm}
\begin{cases}
	\tilde{h}^{im}  & ( \gamma< 1 , \, i < l ) \\
	\tilde{h}^{ij}  & ( \gamma< 1 , \, j < m ) \\
	\tilde{h}^{ij}  & ( \gamma= 1 , \, j < m ) 
\end{cases}
\end{equation}
have been found.

Now, we consider a truncated function $h^*_{(lm)}$ of $h^*$ as follows:
\begin{equation} \label{tr_fcn}
	h_{(lm)}^*(X^*) =
	\begin{dcases}
		\sum_{\substack{\frac{2i}{2-\gamma}+m<k+\alpha  \\ i < l}} \tilde{h}^{im}(x',t) z^i x_n^{\frac{2-\gamma}{2}(m+2)} +\sum_{\substack{\frac{2i}{2-\gamma}+ j< k+\alpha \\ j < m}} \tilde{h}^{ij}(x',t) z^i x_n^{\frac{2-\gamma}{2}(j+2)} & (\gamma <1) \\
		\sum_{ \substack{ i \leq j  < k+\alpha \\  j< m} } \tilde{h}^{ij}(x',t) z^{i+1}  x_n^{(j+2)/2} &  (\gamma =1),
	\end{dcases}
\end{equation}
As in finding Taylor polynomials,  we can find the coefficient function of \eqref{fix_id_mono} in $(\partial_t-L_p^*) h_{(lm)}^*$ as
\begin{equation*} 
	\psi^{lm}(x',t)\coloneqq \frac{1}{\left(\frac{2-\gamma}{2}\right)^m l! m! } \partial_z^l \bar{D}_n^m (\partial_t-L_p^*) h_{(lm)}^*(X^*)\Big|_{(z,x_n)=(0,0)},
\end{equation*}
where $\bar{D}_n:=x_n^{\gamma/2} D_n$. Since $\psi^{lm}$ is expressed by the functions of \eqref{pre_hlm}, we already know it by the induction assumption.

For $\gamma < 1$, we can represent the coefficient function $g^{lm}(x',t)$ of \eqref{fix_id_mono} in $( h_t^* - L_p^* h^*) $ as
\begin{align*}
g^{lm}(x',t)= S^{lm}(x',t) \tilde{h}^{lm}(x',t) + \psi^{lm}(x',t) . 
\end{align*}
Now, taking $\tilde{h}^{lm}$ such that $g^{lm} \equiv \tilde{f}^{lm}$, we can determine $\tilde{h}^{lm}$ inductively using the following formula:
\begin{equation} \label{rrel_hlm}
	\tilde{h}^{lm} (x',t)=\frac{\tilde{f}^{lm}(x',t) -\psi^{lm}(x',t)}{S^{lm}(x',t)} .
\end{equation}
Since $S^{lm}$ is nonvanishing smooth function, $\tilde{h}^{lm}$ is also smooth function. Furthermore, we can compute 
$$\tilde{h}^{00}(x',t) = \frac{\tilde{f}^{00}(x',t)}{(2-\gamma)(1-\gamma) P^{nn}(x',0,0,t)}$$ 
first and apply the recurrence relation \eqref{rrel_hlm} to index $i$, we have $\tilde{h}^{10}$, $\tilde{h}^{20}$,  $\cdots$, $\tilde{h}^{l0}$,  $\cdots$. Next, for each $i=1,2,\cdots, l$, applying the recurrence relation \eqref{rrel_hlm} to index $j$ again, we have all of the following functions sequentially: 
$$\begin{array}{cccccc}
\tilde{h}^{00}, &\tilde{h}^{10}, & \tilde{h}^{20}, &\cdots &\tilde{h}^{l0}, & \cdots\\
\tilde{h}^{01}, &\tilde{h}^{11}, &\tilde{h}^{21}, &\cdots &\tilde{h}^{l1}, & \cdots \\
\vdots & \vdots & \vdots & \ddots & \vdots &\\
\tilde{h}^{0m}, & \tilde{h}^{1m}, &\tilde{h}^{2m}, &\cdots &\tilde{h}^{lm}. &
\end{array}$$
Since $l,m$ were arbitrary, we found all $\tilde{h}^{ij}$. 

On the other hand, by using \eqref{rrel_3var} for $\gamma =1$,  we can represent the coefficient function $g^{0m}(x',t)$, $g^{1m}(x',t)$, $\cdots$, $g^{mm}(x',t)$ of \eqref{fix_id_mono} in $( h_t^* - L_p^* h^*) $ as
\begin{align*}
\begin{pmatrix}
g^{0m}\\
g^{1m}\\
g^{2m}\\
\vdots \\
g^{mm}
\end{pmatrix} = -{P^{nn}}^*(x',0,0,t) T^m
\begin{pmatrix}
\tilde{h}^{0m}\\
\tilde{h}^{1m}\\
\tilde{h}^{2m}\\
\vdots \\
\tilde{h}^{mm}
\end{pmatrix}  + 
\begin{pmatrix}
\psi^{0m}\\
\psi^{1m}\\
\psi^{2m}\\
\vdots \\
\psi^{mm}
\end{pmatrix}  ,
\end{align*}
where the tridiagonal matrix $T^m$ is given by
$$\begin{pmatrix}
m+1 & 1\cdot2 & & & &    \\
\frac{1}{4}m(m+2) & 2(m+1) & 2\cdot3 & & &   \\
 & \frac{1}{4}m(m+2)  &  3(m+1) & 3\cdot4 & &   \\
   & & \ddots & \ddots & \ddots &  \\
   & & & \frac{1}{4}m(m+2) & m(m+1)  & m(m+1) \\
   & & & & \frac{1}{4}m(m+2)  & (m+1)^2 
\end{pmatrix}.$$
Since the tridiagonal matrix $T^m$ is invertible, we can determine $\tilde{h}^{0m}$, $\tilde{h}^{1m}$, $\cdots$,  $\tilde{h}^{mm}$ inductively using the following formula
\begin{equation}  \label{rrel_hlm2}
\begin{pmatrix}
\tilde{h}^{0m}\\
\tilde{h}^{1m}\\
\tilde{h}^{2m}\\
\vdots \\
\tilde{h}^{mm}
\end{pmatrix} = - \frac{1}{  {P^{nn}}^*(x',0,0,t) } (T^m)^{-1}
\begin{pmatrix}
\tilde{f}^{0m} - \psi^{0m}\\
\tilde{f}^{1m} - \psi^{1m}\\
\tilde{f}^{2m} - \psi^{2m}\\
\vdots \\
\tilde{f}^{mm} - \psi^{mm}\\
\end{pmatrix} ,
\end{equation}
when we take $g^{im} \equiv \tilde{f}^{im}$ $(i=0,1,2,\cdots,m) .$ 
Since $ {P^{nn}}^*(x',0,0,t) $ is nonvanishing standard polynomial, $\tilde{h}^{0m}$, $\tilde{h}^{1m}$, $ \cdots$, $\tilde{h}^{mm}$ are also smooth functions. Furthermore, we can compute 
$$\tilde{h}^{00}(x',t) = \frac{\tilde{f}^{00}(x',t)}{P^{nn}(x',0,0,t)}$$ 
first and apply the recurrence relation \eqref{rrel_hlm2}, we have $\tilde{h}^{01}$ and $\tilde{h}^{11} $.
Next, if we repeat this process, we can find the functions of each row of the following array from all the functions obtained in the previous step. 
\begin{align*}
&\tilde{h}^{02},  \quad \tilde{h}^{12}, \quad \tilde{h}^{22}, \\
&\tilde{h}^{03},  \quad \tilde{h}^{13}, \quad \tilde{h}^{23}, \quad \tilde{h}^{33}, \\ 
&\tilde{h}^{04},  \quad \tilde{h}^{14}, \quad \tilde{h}^{24}, \quad \tilde{h}^{34}  , \quad \tilde{h}^{44}, \\ 
& \qquad\qquad\qquad \vdots \\
&\tilde{h}^{0m},  \quad \tilde{h}^{1m}, \quad \tilde{h}^{2m}, \quad \tilde{h}^{3m}  , \quad \tilde{h}^{4m}, \quad \cdots  , \quad \tilde{h}^{mm}.
\end{align*}
Since $l,m$ were arbitrary, we found all $\tilde{h}^{ij}$.

Finally,  In \eqref{rrel_hlm} and \eqref{rrel_hlm2}, the function $h^*$ is constructed so that all terms of $f^*$ are deleted in $( h_t^* - L_p^* h^* -f^*)$. Thus, we have
\begin{equation*}
(h_t - L_p h - f)(X) = 
\begin{dcases}
\sum_{\substack { \frac{2i}{2-\gamma} + j \ge  k+\alpha \\ i,j \in \mathbb{N}_0 } } g^{ij} (x',t) x_n^{i+\frac{2-\gamma}{2}j} & (  \gamma < 1) \\
\sum_{\substack {i \leq j,\,  j > k+\alpha \\ i,j \in \mathbb{N}_0 } } g^{ij} (x',t) (\log x_n)^i x_n^{j/2} & (  \gamma = 1).
\end{dcases}
\end{equation*}
This implies that
\begin{equation*}
\begin{aligned}
	|(h_t - L_p h - f) (X)| \leq Cx_n^{\frac{2-\gamma}{2}(k+\alpha)}\quad \text{for all }X\in \overline{Q_1^+}
\end{aligned}
\end{equation*}
and $ \|h\|_{C^{k+2,\alpha}_s(\overline{Q_1^+})}\leq C$, where $C$ depends on $n$, $\gamma$, $k$, $\alpha$, $\|P^{ij}\|_{C^{k,\alpha}_s(\overline{Q_1^+})}$, $\|Q^{i}\|_{C^{k,\alpha}_s(\overline{Q_1^+})}$, and $\|R\|_{C^{k,\alpha}_s(\overline{Q_1^+})}$. 
\end{proof}
\begin{lemma} \label{bdry_c1a_pc}
Let $u \in C^{\infty}(Q^+_1) \cap C(\overline{Q^+_1})$ be a solution of \eqref{eq:poly_coeff} satisfying $\|u\|_{L^{\infty}(Q^+_1)}\leq 1$ and $u =0$ on $\{X \in \partial_p Q_1^+ : x_n=0\}$. If for each $i,j,k=1,2,\cdots,n-1$,
$$\begin{aligned}
\| f \|_{L^{\infty}(Q^+_1)} &\leq 1, \\
\| f_{ijk} \|_{L^{\infty}(Q^+_1)} &\leq 1,
\end{aligned}\qquad 
\begin{aligned}
\| f_i \|_{L^{\infty}(Q^+_1)} &\leq 1, \\
\| f_t \|_{L^{\infty}(Q^+_1)} &\leq 1,
\end{aligned} \qquad 
\begin{aligned}
\| f_{ij} \|_{L^{\infty}(Q^+_1)} &\leq 1, \\
\| f_{kt} \|_{L^{\infty}(Q^+_1)} &\leq 1,
\end{aligned}
$$
and 
$$\left\{\begin{aligned}
|f(x,t)| &\leq M x_n^{\theta} \quad \mbox{for all } (x,t) \in \overline{Q^+_1} \\
|f_k(x,t)| &\leq M x_n^{\theta} \quad \mbox{for all } (x,t) \in \overline{Q^+_1} \\
\end{aligned}\right.$$
for some $\theta \ge \gamma/2$ and $M > 0$, then $u_n$ is well-defined on $\overline{Q^+_{1/2}}$ and
$$|u_n(x',x_n,t) - u_n(x',y_n,t)  |  \leq \left\{\begin{aligned} 
& C|x_n - y_n|^{1-\gamma/2}  && (0< \gamma \leq 1) \\
& C|x_n - y_n|  && (\gamma \leq 0)
\end{aligned}\right.$$
for all $(x',x_n,t),(x',y_n,t) \in \overline{Q^+_{1/2}}$, where $C$ is a positive constant depending only on $n$, $\lambda$, $\Lambda$, $\gamma$, $\alpha$, $ \|D_{x'}^{\beta} \partial_t^k P^{ij}\|_{L^{\infty} (Q_1^+)}$, $ \|D_{x'}^{\beta} \partial_t^k Q^i\|_{L^{\infty}(Q_1^+)}$, and $ \|D_{x'}^{\beta} \partial_t^k R\|_{L^{\infty}(Q_1^+)}$ $(\beta \in \mathbb{N}_0^{n-1}, k \in \mathbb{N}_0, |\beta| + 2k \leq 3)$.
\end{lemma}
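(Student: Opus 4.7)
The plan is to mirror the proof of Lemma~\ref{bdry_c1a} closely, paying extra attention to the perturbation terms produced by differentiating the $s$-polynomial coefficients. The coefficients $P^{ij}$, $Q^i$, $R$ of $L_p$ are $s$-polynomials at $O$ and hence bounded on $Q_1^+$, and by the assumptions at the start of the subsection they satisfy the same ellipticity and $L^\infty$ hypotheses required by the operator $L$ in the Preliminaries. In particular, $L_p$ falls within the scope of Lemma~\ref{lip_est}, and applying that lemma directly to $u$ (with $\delta=0$) yields $|u(X)|\leq Cx_n$ for $\gamma<1$ and $|u(X)|\leq -Cx_n\log x_n$ for $\gamma=1$ on $\overline{Q_{1/2}^+}$.

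I would then differentiate the equation in the tangential direction $x_k$ for $k\neq n$ (and similarly in $t$), producing
\[
(u_k)_t = L_p u_k + f_k + E_k, \qquad E_k := (\partial_k L_p) u,
\]
where $E_k$ collects terms $(P^{ij})_k D_{ij} u + \cdots + R_k u$ whose coefficients are derivatives of $s$-polynomials and hence themselves bounded $s$-polynomials at $O$. To apply Lemma~\ref{lip_est} to $u_k$, I must verify that $f_k+E_k$ satisfies the pointwise bound of that lemma; the hypotheses on $f$ handle $f_k$, while the Lipschitz-type bound from the previous paragraph, combined with interior parabolic regularity of $u$ away from $\{x_n=0\}$, handles $E_k$. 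Iterating this differentiation procedure on $x'$- and $t$-derivatives produces the analogue of Remark~\ref{rmk:bdry_hi}: $|D^\beta_{(x',t)} u(X)|\leq Cx_n$ for all relevant multiindices $\beta$, and eventually $|D^\beta_{(x',t)} u_n(X)|\leq C$.

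With these Lipschitz-type bounds in place, the rest of the argument is essentially verbatim from Lemma~\ref{bdry_c1a}. Integration by parts as in \eqref{ieq1:u_in}--\eqref{ieq2:u_in} controls $\int_0^z x_n^{-\gamma/2} u_{i'n}\, dx_n$ and $\int_0^z x_n^{-\gamma/2} u_n\, dx_n$; solving the equation for $u_{nn}$ gives
\[
u_{nn} = \frac{1}{x_n^\gamma P^{nn}(X)}\Bigl(u_t - P^{i'j'}u_{i'j'} - 2 x_n^{\gamma/2} P^{i'n} u_{i'n} - Q^{i'} u_{i'} - x_n^{\gamma/2} Q^n u_n - Ru - f\Bigr),
\]
and the ellipticity $P^{nn}(X)\geq\lambda$ ensures that $1/(x_n^\gamma P^{nn})$ has exactly the same singular behavior as $1/(x_n^\gamma A^{nn})$ in the constant-coefficient case. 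The estimates on $\int_0^z u_{nn}\, dx_n$ and then $\int_{y_n}^{x_n} u_{nn}\, dz$ proceed identically, yielding the H\"older bound $|u_n(x',x_n,t)-u_n(x',y_n,t)|\leq C|x_n-y_n|^{1-\gamma/2}$ for $0<\gamma\leq 1$ (or linear for $\gamma\leq 0$) on the interior, which a McShane--Whitney extension promotes to $\{x_n=0\}$.

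The main obstacle I anticipate is the iterative control of $E_k$: because $E_k$ depends on second derivatives of $u$ that are a priori unbounded as $x_n\to 0$, a direct invocation of Lemma~\ref{lip_est} on the equation for $u_k$ fails. The way around this is a layered induction that alternates between interior parabolic regularity (valid away from $\{x_n=0\}$, giving pointwise derivative bounds there) and Lemma~\ref{lip_est} (converting interior bounds into Lipschitz-type decay toward the boundary). Since $s$-polynomials at $O$ are closed under differentiation in $x'$ and $t$, the perturbation $E_k$ retains the same structural hypotheses at each step, so the induction continues to produce admissible equations for $u_k$ to which Lemma~\ref{lip_est} applies.
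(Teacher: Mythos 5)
Your outline of the final steps (solving for $u_{nn}$, the integration by parts, the McShane extension) matches the paper, but the crucial early step is not resolved. You correctly identify the obstacle: after differentiating the equation in a tangential direction, the forcing term $F = f_k + E_k$ contains $x_n^\gamma (P^{nn})_k\, u_{nn}$, $x_n^{\gamma/2}(P^{i'n})_k\, u_{i'n}$, etc., and Lemma~\ref{lip_est} requires $F$ to satisfy the pointwise growth bound \eqref{blowup_f}, which is not a priori clear. However, the resolution you propose --- a ``layered induction'' alternating between interior parabolic regularity and Lemma~\ref{lip_est} --- does not close this gap. With only the boundary Lipschitz bound $|u|\leq Cx_n$ from Lemma~\ref{lip_est} and interior rescaled Schauder estimates on $Q_{r/2}^+(Y)$ with $r = y_n^{(2-\gamma)/2}$, one gets $|x_n^\gamma u_{nn}|\lesssim y_n^{\gamma-1}$, which blows up as $y_n\to0$ whenever $\gamma<1$. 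So this route does not establish that $E_k$ is bounded (or even $O(x_n^{-\delta})$ with small enough $\delta$), and Lemma~\ref{lip_est} cannot be invoked on the equation for $u_k$.

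The missing ingredient is Theorem~\ref{thm:c2a_hc}, the $C^{2+\alpha}_s$-regularity theorem for the variable-coefficient equation, established in the preceding subsection via the approximation lemma and a freezing argument. This is exactly what the paper cites: from $u\in C^{2+\alpha}_s(\overline{Q_{1/2}^+})$ one reads off uniform bounds on the weighted second derivatives $x_n^\gamma u_{nn}$, $x_n^{\gamma/2} u_{i'n}$, $u_{i'j'}$ up to the boundary, and hence $F$ is bounded. Without this input (or an equivalent boundary $C^{2+\alpha}_s$ estimate), the argument cannot proceed past the first differentiation, and this lemma cannot be proved merely by iterating Lemma~\ref{lip_est} and interior regularity. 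This is precisely the reason the paper organizes $C^{2+\alpha}_s$-regularity (Section~5.1) \emph{before} the $s$-polynomial-coefficient theory: the bootstrap is not self-starting.
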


\begin{proof}
Differentiate both sides of \eqref{eq:poly_coeff} with respect to $x_k\,(k \ne n)$, then 
\begin{equation}
\left\{\begin{aligned}\label{eq:uk_pc}
v_t & = L_p v + F  && \mbox{in } Q^+_{3/4} \\
v & = 0 && \mbox{on } \{X \in \partial_p  Q_1^+ : x_n=0\},
\end{aligned}\right.
\end{equation}
where $v = u_k$ and 
$$F = P^{i'j'}_k u_{i'j'} + 2 x_n^{\gamma/2} P^{i'n}_k u_{i'n} + x_n^{\gamma} P^{nn}_k u_{nn} + Q^{i'}_k u_{i'} + x_n^{\gamma/2} Q^n_k u_{n} + R_k u + f_k .$$ 
By \Cref{thm:c2a_hc}, $v$ is bounded solution of \eqref{eq:uk_pc} with bounded forcing term $F$, we can apply \Cref{lip_est} to $v$, we have
\begin{equation*}
|u_k(x,t)| = |v(x,t)| \leq  \begin{cases} 
C x_n & (\gamma <1 ) \\ 
- C x_n \log x_n  & (\gamma = 1) 
\end{cases}
\end{equation*}
for all  $(x,t) \in \overline{Q^+_{1/2}}$. We can obtain the same inequality not only for $u_k$ but also for $u_t$ and $u_{ij}\,(i,j=1,2,\cdots,n-1)$. An integration by parts yields
\begin{align} 
&\left| \int_0^z x_n^{-\gamma/2} P^{i'n} (x,t) u_{i'n} (x,t) \,dx_n \right| \nonumber \\
&\quad \leq \left| z^{-\gamma/2}  P^{i'n} (x',z,t) u_{i'}(x',z,t) \right| + C \int_0^z x_n^{-\gamma/2-1}  | u_{i'} (x,t) | \, dx_n \label{ieq4:u_in} \\
&\quad \leq  \left\{\begin{aligned} 
& C z^{1-\gamma/2}  && (\gamma <1 ) \\ 
& -C \sqrt{z} \log z  && (\gamma = 1) 
\end{aligned}\right. \label{ieq3:u_in}
\end{align}
for all $(x',z,t) \in \overline{Q^+_{1/2}}$. Similarly, we also have
\begin{align} 
\left| \int_0^z x_n^{-\gamma/2} Q^n (x,t) u_n(x,t) \,dx_n \right| \leq  \left\{\begin{aligned} 
& C z^{1-\gamma/2}  && (\gamma <1 ) \\ 
& -C \sqrt{z} \log z  && (\gamma = 1) 
\end{aligned}\right.  \label{ieq3:u_n}
\end{align}
for all $(x',z,t) \in \overline{Q^+_{1/2}}$. Combining \eqref{eq:poly_coeff}, \eqref{ieq3:u_in}, and \eqref{ieq3:u_n} gives
\begin{align}
 \left| \int_0^z u_{nn}\, dx_n \right| 
& \leq \frac{1}{\lambda} \left( \left| \int_0^z  x_n^{-\gamma} \big( u_t - P^{i'j'} u_{i'j'} - Q^{i'} u_{i'} - R u  \big) \,dx_n \right|  \right. \label{ieq:int_u_nn_pc}  \\
& \qquad \left. +  \left| \int_0^z  2 x_n^{-\gamma/2} P^{i'n} u_{i'n} \,dx_n \right| +   \left| \int_0^z   x_n^{-\gamma/2} Q^n u_{n}  \,dx_n \right| + \left| \int_0^z x_n^{-\gamma} f  \,dx_n \right| \right) \nonumber  \\
& \leq  \left\{\begin{aligned} 
& C z^{1-\gamma/2}  && (\gamma <1 ) \\ 
& -C \sqrt{z} \log z && (\gamma = 1) 
\end{aligned}\right. \nonumber 
\end{align}
for all $(x',z,t) \in \overline{Q^+_{1/2}}$ and hence for $\varepsilon < 2^{-\frac{2}{2-\gamma}}$,
\begin{align*}
|u_n(x',\varepsilon, t)| 
&= \left| u_n(x',2^{-\frac{2}{2-\gamma}},t) - \int_{0}^{2^{-\frac{2}{2-\gamma}}} u_{nn}(x,t)\, dx_n +  \int_{0}^{\varepsilon} u_{nn}(x,t)\, dx_n \right| \\
&\leq \left\{\begin{aligned} 
& |u_n(x',2^{-\frac{2}{2-\gamma}},t) | + C (2^{-1} + \varepsilon^{1-\gamma/2})  && (\gamma <1 ) \\ 
&  |u_n(x',1/4,t) |  + C ( \log 2  - \sqrt{\varepsilon} \log \varepsilon )&& (\gamma = 1) .
\end{aligned}\right. 
\end{align*}
Therefore, we have $|u_n| \leq C$ in $Q^+_{1/2}$. This implies that even though $\gamma = 1$, we have the following Lipschitz estimate
\begin{equation} \label{lip_est_all_pc}
|u(x,t)| \leq C x_n \quad \mbox{on }  \overline{Q^+_{1/2}}.
\end{equation}
Furthermore $v=u_k~(k\neq n)$ satisfies assumptions of \Cref{bdry_c1a_pc} again, we have $|u_{kn}| \leq C $ in $Q^+_{1/2}$. Thus, \eqref{ieq:int_u_nn_pc} gives
\begin{align} 
| u_{n}(x',x_n,t) -  u_{n}(x',y_n,t) | 
& \leq  C( |x_n^{2-\gamma}- y_n^{2-\gamma}| + |x_n^{1-\gamma/2}- y_n^{1-\gamma/2}| +  |x_n^{\theta -\gamma + 1}- y_n^{\theta-\gamma+1}|) \label{un_holder_pc} \\
& \leq \begin{cases} 
C |x_n - y_n|^{1-\gamma/2} & (0 < \gamma \leq 1 ) \\ 
C |x_n - y_n|   & (\gamma \leq 0) 
\end{cases} \label{eq:un_hol_pc}
\end{align}
for all $x_n, y_n \in (0,2^{-\frac{2}{2-\gamma}})$. 

As in the proof of \Cref{bdry_c1a}, we can extend \eqref{eq:un_hol_pc} on $\overline{Q_{1/2}^+}$.
\end{proof}
\begin{lemma} \label{lem_expan_sol}
	Let  $k \in \mathbb{N}_{0}$, $0 < \alpha < 1$, and $u \in C^{\infty}(Q^+_1) \cap C(\overline{Q^+_1})$ be a solution of \eqref{eq:poly_coeff} with $f\equiv0$. If the coefficients $P^{ij}$, $Q^i$, and $R$ are $s$-polynomials at $O$ of degree $\mu$ corresponding to $\kappa = k+\alpha$, $\|u\|_{L^\infty(Q^+_1)}\leq 1$, and $u =0$ on $\{X \in \partial_p  Q_1^+ :x_n=0\}$, then for any $M, N \in \mathbb{N}$, there exists a function $v$ of the form
\begin{equation} \label{expan_sol}
	v (X) =  
	\begin{dcases}
		u_n(x',0,t) x_n + \sum_{\substack{ \frac{2i}{2-\gamma} + j < \frac{2M}{2-\gamma}+N \\ i,j \in \mathbb{N}}} \tilde{v}^{ij} (x',t)x_n^{i+\frac{2-\gamma}{2}j}  &  \textnormal{if } (M,N) \ne (1,1) \\
		u_n(x',0,t) x_n & \textnormal{if } (M,N) =(1,1)
	\end{dcases}
\end{equation}
such that 
\begin{equation*} 
	| (v_t - L_p v) (X)| \leq Cx_n^{M + \frac{2-\gamma}{2}(N-2)}  \quad \text{for all } X \in \overline{Q^+_{1/2}}
\end{equation*}
and
\begin{equation*}
| u(X)- v(X)  | \leq  C x_n^{M+\frac{2-\gamma}{2}N} \quad \text{for all } X \in \overline{Q^+_{1/2}}, 
\end{equation*}
where each $\tilde{v}^{ij}$ is a smooth function for $(x',t)$ and $C$ depends on $n$, $\gamma$, $\alpha$, $M$, $N$, $\|P^{ij}\|_{C^{k,\alpha}_s(\overline{Q_1^+})}$, $\|Q^{i}\|_{C^{k,\alpha}_s(\overline{Q_1^+})}$, and $\|R\|_{C^{k,\alpha}_s(\overline{Q_1^+})}$.
\end{lemma}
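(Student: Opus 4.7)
The plan is to mirror the constant-coefficient proof of \Cref{lem:ccf_hi_zf}, replacing its algebraic construction by the general $s$-polynomial algebra of \Cref{calc_TBT} and replacing its pointwise estimates by their $s$-polynomial-coefficient counterparts from \Cref{bdry_c1a_pc}. Because $f \equiv 0$, \Cref{bdry_c1a_pc} (applied iteratively in the $(x',t)$-directions, as in \Cref{rmk:bdry_hi}) ensures that $U^{0}(x',t) \coloneqq u_n(x',0,t)$ and all of its $(x',t)$-derivatives are well-defined and controlled on $\overline{Q_{1/2}^+}$, which in turn justifies the leading term $u_n(x',0,t)\,x_n$ of the ansatz.

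I would then build the coefficients $\tilde v^{ij}(x',t)$ by induction on pairs $(i,j) \in \mathbb{N} \times \mathbb{N}$, ordered by the value $i + \frac{2-\gamma}{2}j$ and resolving ties via the auxiliary-variable trick used in the proof of \Cref{calc_TBT}. The engine is the identity
\[
-x_n^{\gamma} P^{nn}(X)\, D_{nn}\bigl(\phi(x',t)\, x_n^{i + \frac{2-\gamma}{2}j}\bigr) = S^{ij}(x',t)\, \phi(x',t)\, x_n^{i + \frac{2-\gamma}{2}(j-2)} + \bigl(\text{higher order in }x_n\bigr),
\]
where $S^{ij}(x',t) = -\bigl(i + \tfrac{2-\gamma}{2}j\bigr)\bigl(i - 1 + \tfrac{2-\gamma}{2}j\bigr) P^{nn}(x',0,t)$ is nonvanishing precisely because $i,j \ge 1$ (so both factors are positive) and $P^{nn}(x',0,t) \ge \lambda$. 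Requiring the coefficient of each power $x_n^{I + \frac{2-\gamma}{2}J}$ with $I + \frac{2-\gamma}{2}J < M + \frac{2-\gamma}{2}(N-2)$ in $(v_t - L_p v)$ to vanish yields a closed-form formula for $\tilde v^{I,J+2}$ in terms of data already determined, exactly as in the recurrences \eqref{rrel_hlm}--\eqref{rrel_hlm2} of \Cref{calc_TBT}. The fact that the $x_n^{\gamma/2}$-term produced by $L_p$ acting on $u_n(x',0,t)\,x_n$ can be cancelled at all is precisely the $(i,j) = (1,1)$ case of this mechanism, which both explains the special form of the $(M,N) = (1,1)$ statement and shows that the constraint $i,j \ge 1$ is the smallest range of exponents that closes the recursion. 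Together with the smoothness of each $\tilde v^{ij}$ in $(x',t)$, this yields the first claim $|(v_t - L_p v)(X)| \le C\, x_n^{M + \frac{2-\gamma}{2}(N-2)}$.

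Finally, I set $w = u - v$ and $F = -(v_t - L_p v)$, so $w$ solves $w_t = L_p w + F$ with $F$ and its $(x',t)$-derivatives decaying at the required rate. Following the induction in \Cref{lem:ccf_hi_zf}, for any multi-index $\beta$ in $(x',t)$ the function $D^{\beta}_{(x',t)} w$ satisfies an equation of the same form and the hypotheses of \Cref{bdry_c1a_pc}; the integration-by-parts chain analogous to \eqref{ieq4:u_in}--\eqref{ieq:int_u_nn_pc}, applied to $D^{\beta}_{(x',t)} w$, gives bounds on $\int_0^z x_n^{-\gamma/2} D_{i'n} w\,dx_n$ and $\int_0^z x_n^{-\gamma/2} D_n w\,dx_n$. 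Solving $w_t = L_p w + F$ for $w_{nn}$ and integrating twice in $x_n$ then improves the bound to $|w(X)| \le C\, x_n^{M + \frac{2-\gamma}{2}N}$, which is the second conclusion. The main technical obstacle is the bookkeeping: with $s$-polynomial coefficients every monomial $x_n^{i + \frac{2-\gamma}{2}j}$ in $v$ spawns a countable family of monomials in $L_p v$, so the ordering of the pairs $(i,j)$ and the inductive triangularization of the linear system for $\tilde v^{ij}$ must be arranged exactly as in \Cref{calc_TBT} in order for the recursion to close at each step.
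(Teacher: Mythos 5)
Your proposal follows the same strategy as the paper: use the auxiliary $z$-variable algebra from \Cref{calc_TBT} to build $v$ term-by-term, identify the leading symbol
\[
S^{lm}(x',t) = -\Bigl(l + \tfrac{2-\gamma}{2}m\Bigr)\Bigl(l - 1 + \tfrac{2-\gamma}{2}m\Bigr) P^{nn}(x',0,t),
\]
observe it is nonvanishing because $l,m \ge 1$ and $P^{nn}(x',0,t) \ge \lambda$, and then close the argument by solving the equation for $w_{nn}$ and integrating. Your explanation of why the $(M,N)=(1,1)$ case is special is also correct. So the overall route coincides with the paper's.

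Two points in your third paragraph need sharpening, because as written they hide the main bookkeeping issue. First, $D^{\beta}_{(x',t)}w$ does not satisfy an equation ``of the same form'': since the coefficients of $L_p$ depend on $(x',t)$, differentiating $w_t = L_p w + F$ produces Leibniz cross-terms $(D^{\beta'}P^{ij})(D^{\beta-\beta'}w_{ij})$, etc. The paper avoids rewriting the equation for $D^\beta w$ entirely; instead it differentiates the identity $x_n^\gamma D_{nn}w = (P^{nn})^{-1}\bigl(\partial_t - \cdots - (\partial_t - L_p)\bigr)w$ (equation \eqref{op_id}) and then applies Leibniz's formula on the right, so that $x_n^\gamma D_{nn}D^\beta w$ is expressed directly in terms of $D^{\beta'}(u-v)$, $D^{\beta'}(u_n - v_n)$ for $\beta' \leq \beta$, and $D^{\beta}(v_t - L_p v)$. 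Second, the jump to $|w| \le C x_n^{M+\frac{2-\gamma}{2}N}$ is not one application of the integration-by-parts chain of \Cref{bdry_c1a_pc} followed by two integrations; it is a genuine two-dimensional induction. The paper tracks the pair of bounds
\[
|D^{\beta}_{(x',t)}(u-v)| \le C x_n^{i_1 + \frac{2-\gamma}{2}i_2},
\qquad
|D^{\beta}_{(x',t)}(u_n-v_n)| \le C x_n^{i_1 - 1 + \frac{2-\gamma}{2}i_2},
\]
starting from the base case $(i_1,i_2)=(1,1)$ supplied by \Cref{rmk:bdry_hi}, advancing $i_2$ via \eqref{op_id} and two integrations in $x_n$ (each pass gains $\tfrac{2-\gamma}{2}$), and then crossing from $(i_1, j_2)$ with $j_2 \ge N + \frac{2}{2-\gamma}(M - i_1)$ to $(i_1+1,1)$. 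The integration-by-parts trick from \Cref{bdry_c1a_pc} is what you need only at the base step, before you have any control of $w_n$; once the induction is running, the operator identity plus the second half of the inductive hypothesis handles the mixed terms $x_n^{\gamma/2}P^{i'n}w_{i'n}$ and $x_n^{\gamma/2}Q^n w_n$ directly. With these two corrections your argument matches the paper's.
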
 

\begin{proof}
For the modified function ${v}^*$ of $v$ and the modified operator $L_p^*$ of $L_p$, the function $ ({v}^*_t - L_p^* {v}^*)$ is expressed as follows:   
\begin{equation} \label{mv_Lv}
	({v}^*_t - L_p^* {v}^*) (X^*) =  \sum_{\substack { \frac{2i}{2-\gamma} + j < \frac{2M}{2-\gamma}+N+\mu+2 \\ i,j \in \mathbb{N} } } g^{ij} (x',t) z^ i x_n^{ \frac{2-\gamma}{2}(j-2)} ,
\end{equation}
where each $g^{ij}$ is an unknown smooth function.

For arbitrary fixed integers $l,m \in \mathbb{N}$, the function $g^{lm}$ in $( \partial_t - L_p^*) {v}^* $ is the sum of the coefficient functions of 
\begin{equation} \label{fix_id_mono2}		 
	z^l x_n^{\frac{2-\gamma}{2} (m-2)}
\end{equation}
in the functions obtained by expanding the following operators
\begin{equation} \label{op_mono2}
	({v}^*_t - L_p^* {v}^*) (X^*) =  (\partial_t -L_p^* ) \Big(u_n(x',0,t) z \Big)  +  \sum_{\substack{ \frac{2i}{2-\gamma} + j < \frac{2M}{2-\gamma}+N \\ i,j \in \mathbb{N}}}  (\partial_t - L_p^*) \left( \tilde{v}^{ij} (x',t) z^i x_n^{\frac{2-\gamma}{2} j} \right).
\end{equation}
Since we will find $\tilde{v}^{ij}$ in the same way as \Cref{calc_TBT},  it is sufficient to consider only the case $(i,j)=(l,m)$ for now. In this case, in the expansion of \eqref{op_mono2}, the coefficient function for the monomial of the form \eqref{fix_id_mono2} can be obtained as follows: 
\begin{align*}
- {P^{nn}}^*(x',0,0,t) \bar{D}_{nn}^* \left( \tilde{v}^{lm}(x',t)  z^l x_n^{\frac{2-\gamma}{2}m}  \right)  = S^{lm}(x',t)  \tilde{v}^{lm}(x',t) z^l x_n^{\frac{2-\gamma}{2}(m-2)},
\end{align*}
where
\begin{equation*}	
	S^{lm}(x',t) =-\Big( l+ \frac{2-\gamma}{2}m \Big) \Big( l+ \frac{2-\gamma}{2}m-1 \Big) {P^{nn}}^*(x',0,0,t).
\end{equation*}
Now, we consider a truncated function ${v^*_{(lm)}}$ of $v^*$ as follows:
\begin{equation*}
	v_{(lm)}^* (X^*) =u_n(x',0,t) z + \sum_{\substack{ \frac{2i}{2-\gamma} + m < \frac{2M}{2-\gamma}+N \\ 1 \leq i < l}}  \tilde{v}^{im}(x',t) z^i x_n^{\frac{2-\gamma}{2}m} +  \sum_{\substack{ \frac{2i}{2-\gamma} + j < \frac{2M}{2-\gamma}+N \\ i \in \mathbb{N}, \, 1 \leq j < m}}  \tilde{v}^{ij}(x',t) z^i x_n^{\frac{2-\gamma}{2}j} ,
\end{equation*}
If $j=1$, the order of $x_n$ is negative in \eqref{mv_Lv}, so considering $x_n^{\frac{2-\gamma}{2}} (\partial_t-L_p^*) v_{(lm)}^*$, we can find the coefficient function of \eqref{fix_id_mono2} in $x_n^{\frac{2-\gamma}{2}} (\partial_t-L_p^*) v_{(lm)}^*$ as
\begin{equation*} 
	\psi^{lm}(x',t)\coloneqq \frac{1}{\left(\frac{2-\gamma}{2}\right)^{m-1} l! (m-1)! } \partial_z^l \bar{D}_n^{m-1} \left( x_n^{\frac{2-\gamma}{2}} (\partial_t-L_p^*) v_{(lm)}^*(X^*) \right) \bigg|_{(z,x_n)=(0,0)},
\end{equation*}
where $\bar{D}_n:=x_n^{\gamma/2} D_n$. Thus, we can represent the function $g^{lm}(x',t)$ in $( \partial_t - L_p^*) {v}^* $ as
\begin{align*}
g^{lm}(x',t)= S^{lm}(x',t) \tilde{v}^{lm}(x',t) + \psi^{lm}(x',t) . 
\end{align*}
Now, taking $\tilde{v}^{lm}$ such that $g^{lm} \equiv 0$, we can determine $\tilde{v}^{lm}$ inductively using the following formula:
\begin{equation} \label{rrel_hlm3} 
	\tilde{v}^{lm} (x',t)= - \frac{\psi^{lm}(x',t)}{S^{lm}(x',t)} 
\end{equation}
Since $l,m$ were arbitrary, we found all $\tilde{v}^{ij}$. 

Using the recurrence relation \eqref{rrel_hlm3}, for any $M,N \in \mathbb{N}$, we can construct the function $v^*$ such that all monomials of the form
$$ g^{ij}(x',t) z^i x_n^{\frac{2-\gamma}{2}(j-2)} \quad \left( \frac{2i}{2-\gamma} + j < \frac{2M}{2-\gamma}+N  \right) $$ 
are deleted in $( {v}^*_t - L_p^*{v}^*)  $. Thus, we have
\begin{equation*}
	(v_t - L_p v) (X) = \sum_{\substack { \frac{2i}{2-\gamma} + j \ge \frac{2M}{2-\gamma}+N \\ i,j \in \mathbb{N}  } } g^{ij} (x',t) x_n^{i+\frac{2-\gamma}{2}(j-2)} .
\end{equation*}
This implies that 
\begin{equation} \label{diff_op_v}
	|D_{(x',t)}^\beta (v_t - L_p v)(X)| \leq Cx_n^{M + \frac{2-\gamma}{2}(N-2)}  
\end{equation}
for all $X \in \overline{Q^+_{3/4}}$ and $\beta \in \mathbb{N}_{0}^{n}$, where $C$ depends on $n$, $\gamma$, $\alpha$, $\beta$, $M$,  $N$, $\|P^{ij}\|_{C^{k,\alpha}_s(\overline{Q_1^+})}$, $\|Q^{i}\|_{C^{k,\alpha}_s(\overline{Q_1^+})}$, and $\|R\|_{C^{k,\alpha}_s(\overline{Q_1^+})}$. 
 
We now claim that  for any $i_1,i_2 \in \mathbb{N}$ with $i_1 + \frac{2-\gamma}{2} i_2 \leq M +\frac{2-\gamma}{2}N$, the following inequalities hold:
\begin{equation} \label{approx_N-th_pc}
	| D_{(x',t)}^{\beta} ( u- v )(X)  | \leq  C x_n^{i_1+\frac{2-\gamma}{2} i_2}  
	\qquad \text{and} \qquad 
	| D_{(x',t)}^{\beta} ( u_n- v_n )(X)  |  \leq  C x_n^{i_1-1+\frac{2-\gamma}{2} i_2}  
\end{equation}
for all $X \in \overline{Q^+_{1/2}}$ and $\beta \in \mathbb{N}_0^n$. The proof is by induction on $M$ and $N$. Suppose first $(i_1,i_2)=(1,1)$. By \Cref{rmk:bdry_hi} and \eqref{expan_sol}, we know that 
\begin{equation*} 
	| D_{(x',t)}^{\beta} ( u- v )(X)  |  \leq  C x_n 
	\qquad \text{and} \qquad 
	| D_{(x',t)}^{\beta} ( u_n- v_n )(X)  | \leq  C 
\end{equation*}
for all $X \in \overline{Q^+_{1/2}}$ and $\beta \in \mathbb{N}_0^n$. The operator $x_n^{\gamma} P^{nn} D_{nn}$ can be represented as
$$x_n^{\gamma} P^{nn} D_{nn} = \partial_t - P^{i'j'} D_{i'j'} - 2 x_n^{\gamma/2} P^{i'n} D_{i'n} - Q^{i'} u_{i'} - x_n^{\gamma/2} Q^n D_{n} - R - (\partial_t -L_p) $$
and hence we know that
\begin{align} \label{op_id} 
	x_n^{\gamma} D_{nn}  D_{(x',t)}^{\beta} (u-v) & = D_{(x',t)}^{\beta} \big(  x_n^{\gamma} D_{nn} (u-v) \big) \nonumber \\
	& = D_{(x',t)}^{\beta} \left(  \frac{1}{P^{nn} } \partial_t - \frac{P^{i'j'} }{P^{nn} } D_{i'j'} - \frac{Q^{i'}}{P^{nn}} D_{i'} - \frac{R}{P^{nn}} \right)(u-v) \nonumber \\
	&\quad    - x_n^{\gamma/2} D_{(x',t)}^{\beta} \left( 2 \frac{P^{i'n}}{P^{nn}} D_{i'} + \frac{Q^n}{P^{nn}} \right)  (u_n-v_n)   + D_{(x',t)}^{\beta} \left( \frac{1}{P^{nn}} ( v_t - L_p v) \right) 
\end{align}
Since $P^{nn}$ is nonvanishing smooth function for $x'$ and $t$, by \Cref{leibniz_form}, \eqref{diff_op_v}, and \eqref{op_id}, we have
\begin{equation}  \label{first_est}
| x_n^{\gamma} D_{nn} D_{(x',t)}^{\beta} (u-v)(X)| \leq C ( x_n + x_n^{\gamma/2} + x_n^{\gamma/2} ) \leq C  x_n^{\gamma/2}
\end{equation} 
for all $X \in \overline{Q^+_{1/2}}$ and $\beta \in \mathbb{N}_0^n$. Since $u = v$ on $\{x_n=0\}$ and $u_n = v_n$ on $\{x_n=0\}$, if we divide \eqref{first_est} by both sides $x_n^{\gamma}$ and integrate on the $x_n$-variable from 0 to $x_n$, we have 
$$| D_{(x',t)}^{\beta} (u-v)(X)| \leq C x_n^{1 + \frac{2-\gamma}{2}} \qquad \text{and} \qquad | D_{(x',t)}^{\beta} (u_n-v_n)(X)| \leq C x_n^{\frac{2-\gamma}{2}}$$
for all $X \in \overline{Q^+_{1/2}}$ and $\beta \in \mathbb{N}_0^n$. 

Now, let us prove \eqref{approx_N-th_pc} for fixed $i_1 \leq M$ by using induction on $i_2$. We assume \eqref{approx_N-th_pc} is valid for some $i_2 \in \mathbb{N}$ with $i_2 < N+ \frac{2}{2-\gamma}(M-i_1)$. From \Cref{leibniz_form}, \eqref{diff_op_v}, \eqref{op_id} and induction assumption, we have
\begin{equation*} 
| x_n^{\gamma} D_{nn} D_{(x',t)}^{\beta} (u-v)(X)| \leq C \left( x_n^{i_1+\frac{2-\gamma}{2} i_2} + x_n^{i_1+ \frac{2-\gamma}{2} (i_2-1)} + x_n^{M + \frac{2-\gamma}{2} (N-2)} \right) \leq C x_n^{i_1+ \frac{2-\gamma}{2} (i_2-1)}\end{equation*} 
for all $X \in \overline{Q^+_{1/2}}$ and $\beta \in \mathbb{N}_0^n$. Then, we know that 
$$| D_{(x',t)}^{\beta} (u-v)(X)| \leq C x_n^{i_1 + \frac{2-\gamma}{2}(i_2+1)} \qquad \text{and} \qquad | D_{(x',t)}^{\beta} (u_n-v_n)(X)| \leq C x_n^{i_1-1 + \frac{2-\gamma}{2}(i_2+1)}$$
for all $X \in \overline{Q^+_{1/2}}$ and $\beta \in \mathbb{N}_0^n$ in the same way as for $(i_1,i_2)=(1,1)$. 

Next, let us prove \eqref{approx_N-th_pc} completely again using induction on $i_1$. We assume that  \eqref{approx_N-th_pc} is valid for some $i_1,i_2 \in \mathbb{N}$ with $i_1 < M$. Then, by the induction process on $i_2$, we can see that \eqref{approx_N-th_pc} holds for $(i_1,j_2)$, where 
$$j_2 \ge N + \frac{2}{2-\gamma}(M-i_1) \ge 1 + \frac{2}{2-\gamma}.$$
It implies that 
\begin{equation*}
	| D_{(x',t)}^{\beta} (u-v)(X)| \leq C x_n^{i_1 + \frac{2-\gamma}{2}j_2} \leq C x_n^{i_1+1 +\frac{2-\gamma}{2}}
\end{equation*}
and
\begin{equation*}
	| D_{(x',t)}^{\beta} (u_n-v_n)(X)| \leq C x_n^{i_1-1 + \frac{2-\gamma}{2}j_2} \leq C x_n^{i_1 + \frac{2-\gamma}{2}}.
\end{equation*}
Thus, we obtain  \eqref{approx_N-th_pc} for $(i_1+1,1)$. Therefore, we conclude that \eqref{approx_N-th_pc} holds for all $i_1,i_2 \in \mathbb{N}$ with $i_1 + \frac{2-\gamma}{2} i_2 \leq M +\frac{2-\gamma}{2}N$. 
\end{proof}

\begin{lemma} \label{lem:pcf_hi_zf}
Let  $k \in \mathbb{N}_{0}$, $0 < \alpha < 1$, and  $u \in C^{\infty}(Q^+_1) \cap C(\overline{Q^+_1})$ be a solution of \eqref{eq:poly_coeff} with $f=0$. If the coefficients $P^{ij}$, $Q^i$, and $R$ are $s$-polynomials at $O$ of degree $\mu$ corresponding to $(k+\alpha)$, $\|u\|_{L^\infty(Q^+_1)}\leq 1$, $\|u\|_{L^\infty(Q^+_1)}\leq 1$, and $u =0$ on $\{X \in \partial_p  Q_1^+ :x_n=0\}$, then for each $M,N \in \mathbb{N}$, there exists an $s$-polynomial $p$ of degree $m$ corresponding to $\kappa \coloneqq N+\frac{2M}{2-\gamma}$ at $O$ such that
\begin{equation*}
	|(p_t -L_p p)(X)| \leq Cx_n^{M + \frac{2-\gamma}{2}(N-2)} \quad \mbox{for all } X \in \overline{Q^+_{1/2}}
\end{equation*}
and
\begin{equation*}
	|u(X)-p(X)|\leq Cs[X,O]^{N+\frac{2M}{2-\gamma}} \quad \mbox{for all } X \in \overline{Q^+_1},
\end{equation*}
where $C$ is a positive constant depending only on $n$, $\lambda$, $\Lambda$, $\gamma$, $M$, $N$, $\|P^{ij}\|_{C^{k,\alpha}_s(\overline{Q^+_1})}$, $\|Q^{i}\|_{C^{k,\alpha}_s(\overline{Q^+_1})}$, and $\|R\|_{C^{k,\alpha}_s(\overline{Q^+_1})}$.
\end{lemma}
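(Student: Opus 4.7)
The plan is to leverage \Cref{lem_expan_sol} to produce the ``shape'' of the approximating function, and then replace its smooth $(x',t)$-coefficients by their Taylor polynomials to obtain an $s$-polynomial. Fix integers $M', N'$ much larger than $M, N$ (any choice with $M' + \tfrac{2-\gamma}{2} N' \ge N + \tfrac{2M}{2-\gamma} + 2$ works). By \Cref{lem_expan_sol} there exists a function $v$ of the form \eqref{expan_sol} (with $M', N'$ in place of $M, N$) such that
\begin{equation*}
|(v_t - L_p v)(X)| \le C x_n^{M' + \frac{2-\gamma}{2}(N'-2)} \quad\text{and}\quad |u(X) - v(X)| \le C x_n^{M' + \frac{2-\gamma}{2} N'}
\end{equation*}
on $\overline{Q_{1/2}^+}$, with all coefficient functions $u_n(x',0,t)$ and $\tilde v^{ij}(x',t)$ smooth in $(x',t)$.

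Next, I would define $p$ by replacing each of these smooth coefficients by a Taylor polynomial in $(x', t)$ (with $t$ carrying weight $2$) of sufficiently high degree. More precisely, for each monomial factor $x_n^{i + \frac{2-\gamma}{2} j}$ appearing in $v$, approximate its coefficient by its Taylor polynomial of weighted degree $< N + \tfrac{2M}{2-\gamma} - (i + \tfrac{2-\gamma}{2} j)$; the result is an $s$-polynomial $p$ of degree $m$ corresponding to $\kappa = N + \tfrac{2M}{2-\gamma}$ at $O$. The pointwise approximation $|u - p| \le C s[X,O]^{N + \frac{2M}{2-\gamma}}$ then follows from the triangle inequality $|u - p| \le |u - v| + |v - p|$: the first term is absorbed by the choice of $M', N'$, and the second term is a sum of products of the form (weighted Taylor remainder in $(x',t)$)$\,\times\, x_n^{i + \frac{2-\gamma}{2} j}$, each of which is controlled by $Cs[X,O]^{N + \frac{2M}{2-\gamma}}$ by the standard Taylor remainder estimate applied to the smooth coefficients.

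For the operator estimate I would use the decomposition
\begin{equation*}
p_t - L_p p = (v_t - L_p v) - \bigl((v-p)_t - L_p(v-p)\bigr).
\end{equation*}
The first term is already $O(x_n^{M + \frac{2-\gamma}{2}(N-2)})$ by our choice of $M', N'$. For the second, $v - p$ is a finite sum of terms $R^{ij}(x',t)\, x_n^{i + \frac{2-\gamma}{2} j}$, where $R^{ij}$ is a Taylor remainder of weighted degree $\ge N + \tfrac{2M}{2-\gamma} - (i + \tfrac{2-\gamma}{2} j)$. Applying $(\partial_t - L_p)$, the $P^{nn} x_n^\gamma D_{nn}$ term produces a factor $x_n^{i - 2 + \frac{2-\gamma}{2} j + \gamma} = x_n^{i - 2 + \frac{2-\gamma}{2}(j-2) + 2}$ times $R^{ij}$, while the $D_{i'j'}$, $D_{i'}$, and $\partial_t - R$ contributions yield at least as many powers of $x_n$; every resulting term is bounded by $C x_n^{M + \frac{2-\gamma}{2}(N-2)}$ after using the Taylor remainder bound on $R^{ij}$ and its $(x',t)$-derivatives together with $|x'|, \sqrt{|t|} \le 1$ on $\overline{Q_{1/2}^+}$.

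The main obstacle is the bookkeeping in the last step: one must verify uniformly that every monomial generated by $L_p(v-p)$ (including the mixed $x_n^{\gamma/2} D_{i'n}$ contributions and the contributions from the $s$-polynomial coefficients $P^{ij}, Q^i, R$ themselves) carries enough powers of $x_n$. Once the index accounting is arranged, the remainder estimates collapse everything into the clean bound $Cx_n^{M + \frac{2-\gamma}{2}(N-2)}$, and the constants depend only on $n,\lambda,\Lambda,\gamma,M,N$ and the $C^{k,\alpha}_s$-norms of the coefficients, as required.
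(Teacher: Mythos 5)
Your plan mirrors the paper's: invoke \Cref{lem_expan_sol} to get $v$ of the special form \eqref{expan_sol}, Taylor-expand the smooth $(x',t)$-coefficients, and truncate to form the $s$-polynomial $p$. Two small calibrations first. The upgrade to larger $M',N'$ is unnecessary: the pointwise estimate already works with the given $M,N$, since $|u-v|\le Cx_n^{M+\frac{2-\gamma}{2}N}\le C\,s[X,O]^{N+\frac{2M}{2-\gamma}}$. And the Taylor cut-off should subtract the $s$-degree $\frac{2i}{2-\gamma}+j$ of the $x_n$-factor, not the plain exponent $i+\frac{2-\gamma}{2}j$; otherwise $p$ is not an $s$-polynomial of degree corresponding to $\kappa=N+\frac{2M}{2-\gamma}$. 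Compare \eqref{appox_spol_pc}.

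The genuine gap is in the operator estimate. In $(v-p)_t-L_p(v-p)$, the low-$s$-degree terms of $v-p$ do not carry enough $x_n$-powers to produce a pure $x_n$-bound. Take the term $R^{10}(x',t)\,x_n$, where $R^{10}=\tilde{v}^{10}-T^{10}$ is a Taylor remainder in $(x',t)$: the mixed second-order part of $L_p$ gives
$2x_n^{\gamma/2}P^{i'n}D_{i'n}\bigl(R^{10}x_n\bigr)=2P^{i'n}\bigl(D_{i'}R^{10}\bigr)\,x_n^{\gamma/2}$,
whose $x_n$-power is only $\gamma/2$, and the factor $D_{i'}R^{10}$ depends on $(x',t)$ alone, so it does not decay as $x_n\to 0$ at a fixed $(x',t)\ne O$. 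Hence this contribution is not $O\bigl(x_n^{M+\frac{2-\gamma}{2}(N-2)}\bigr)$ once $M,N\ge 2$; what the Taylor-remainder estimates actually give is a mixed bound in $s[X,O]$, not a pure $x_n$-power. Your concluding paragraph asserts that ``the remainder estimates collapse everything into the clean bound,'' but they cannot, precisely because the remainders carry no $x_n$. The paper is equally terse at this step (it simply cites \eqref{diff_op_v}), and only the $s$-approximation estimate of this lemma is used downstream in \Cref{lem:hi_nzf}---that part of your argument does go through once the degree cut-off is corrected---but the operator-bound paragraph as written claims more than the sketch can deliver.
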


\begin{proof}
By \Cref{lem_expan_sol}, for any $M,N \in \mathbb{N}$, there exists a function $v$ of the form \eqref{expan_sol}
such that 
\begin{equation*}
|u(X) - v(X)  | \leq  C x_n^{M+\frac{2-\gamma}{2}N}
\end{equation*}
for all $X \in \overline{Q^+_{1/2}}$ and $\beta \in \mathbb{N}_{0}^{n}$. Since each $\tilde{v}^{ij}(x',t)$ is a smooth function, by Taylor theorem, there exists Taylor polynomial 
$$T^{ij}(x',t) = \sum_{|\beta| + 2k \leq 2M+N-1 } \frac{D_{x'}^{\beta} \partial_t^k \tilde{v}^{ij}(O)}{\beta!  k!}  {x'} ^{\beta} t^{k}  $$ 
of $\tilde{v}^{ij}(x',t)$ at $O$ such that 
$$|\tilde{v}^{ij}(x',t) -T^{ij}(x',t) | \leq C \sum_{|\beta| + 2k = 2M+N } |x_1|^{\beta_1} \cdots |x_{n-1}|^{\beta_{n-1}} |t|^{k}  $$
for all $|x_i| < 1/2 \, (i=1,2,\cdots,n-1)$ and $- 1/4 < t \leq 0$. This implies that 
\begin{align}
	&\left| v(X)  - \sum_{\substack{ \frac{2i}{2-\gamma} + j < \frac{2M}{2-\gamma}+N \\ i,j \in \mathbb{N}  \text{ or } (i,j)=(1,0) }} T^{ij}(x',t) x_n^{i+\frac{2 - \gamma}{2}j} \right| \nonumber\\
	&\quad \leq C \sum_{\substack{ \frac{2i}{2-\gamma} + j < \frac{2M}{2-\gamma}+N \\ i,j \in \mathbb{N}  \text{ or } (i,j)=(1,0) }}   \sum_{|\beta| + 2k = 2M+N } |x_1|^{\beta_1} \cdots |x_{n-1}|^{\beta_{n-1}} |t|^{k} x_n^{i+\frac{2 - \gamma}{2}j} \nonumber \\
	&\quad \leq C s[X,O]^{2M+N+ \frac{2}{2-\gamma} } \label{appox_g-poly}
\end{align}
for all $X \in \overline{Q^+_{1/2}}$. Now put 
\begin{equation} \label{appox_spol_pc}
	p (X) =  \sum_{|\beta| + 2k + \frac{2i}{2-\gamma} + j  < N + \frac{2M}{2-\gamma} } \frac{D_{x'}^{\beta} \partial_t^k \tilde{v}^{ij}(O)}{\beta! k!}  {x'}^{\beta} x_n^{i+\frac{2 - \gamma}{2}j} t^{k} 
\end{equation}
which is an $s$-polynomial of degree $m$ corresponding to $\kappa = N + \frac{2M}{2-\gamma}$. Then, from \eqref{diff_op_v}, we know that
\begin{equation*}
	|(p_t -L_p p) (X)| \leq Cx_n^{M + \frac{2-\gamma}{2}(N-2)}\quad \mbox{for all } X \in \overline{Q^+_{1/2}}.
\end{equation*}
Combining  \eqref{approx_N-th_pc} and \eqref{appox_g-poly} gives 
\begin{align*}
	|u(X) - p(X)| & \leq |u(X)- v(X) | + \left| v(X) -  \sum_{\substack{ \frac{2i}{2-\gamma} + j < \frac{2M}{2-\gamma}+N \\ i,j \in \mathbb{N}  \text{ or } (i,j)=(1,0) }}  T^{ij}(x',t) x_n^{i+\frac{2 - \gamma}{2}j} \right|  \\
	&\qquad\qquad\qquad\qquad\qquad\quad + \left| \sum_{\substack{ \frac{2i}{2-\gamma} + j < \frac{2M}{2-\gamma}+N \\ i,j \in \mathbb{N}  \text{ or } (i,j)=(1,0) }} T^{ij}(x',t) x_n^{i+\frac{2 - \gamma}{2}j}  - p (x,t) \right| \\
	& \leq C x_n^{M + \frac{2-\gamma}{2}N} + C s[X,O]^{2M +N+\frac{2}{2-\gamma}} \\
	&\qquad   + \sum_{|\beta| + 2k + \frac{2i}{2-\gamma} + j  \ge N + \frac{2M}{2-\gamma} } \frac{ |D_{x'}^{\beta} \partial_t^k \tilde{v}^{ij}(O)|}{\beta! k!}  |x_1|^{\beta_1} \cdots |x_{n-1}|^{\beta_{n-1}} x_n^{i+\frac{2 - \gamma}{2}j} |t|^{k}  \\
	& \leq C s[X,O]^{N+\frac{2M}{2-\gamma}}
\end{align*}
for all $X \in \overline{Q^+_{1/2}}$ which is extensible throughout $Q_1^+$.
\end{proof}
\subsection{Generalized coefficient freezing method} 
In this section, we prove the main theorem. By combining $C_s^{2+\alpha}$-regularity of solutions for \eqref{eq:main} and boundary $C_s^{k,2+\alpha}$-regularity of solutions for \eqref{eq:poly_coeff}, boundary $C_s^{k,2+\alpha}$-regularity of solutions for \eqref{eq:main} can be obtained, and finally we have \Cref{thm:main} by combining \Cref{cond1_grt}, \Cref{thm:int_bd_gb} and \Cref{lem:hi_nzf}.
\begin{lemma} \label{lem:hi_nzf}
Let  $k \in \mathbb{N}_{0}$, $0<\alpha< 1$ with $ k+2+\alpha \notin \mathcal{D}$, and assume 
$$a^{ij}, \, b^i, \, c, \, f \in C^{k,\alpha}_s(\overline{Q_1^+}) \quad (i,j=1,2,\cdots,n).$$
Suppose $u \in C^2(Q^+_1)\cap  C(\overline{Q^+_1})$ is a solution of \eqref{eq:main} satisfying $u=0$ on $\{ X \in \partial_p Q^+_1 :x_n=0\}$. Then there exists an $s$-polynomial $p$ of degree $m$ corresponding to $\kappa \coloneqq k + 2 + \alpha$ at $O$ such that  
\begin{equation*}
	|u(X)-p(X)| \leq C(\|u\|_{L^\infty(Q^+_1)}+\|f\|_{C^{k,\alpha}_s(\overline{Q^+_1})} ) s[X,O]^{k+2+\alpha} \quad \mbox{for all } X \in \overline{Q^+_1} ,
\end{equation*}
where $C$ is a positive constant depending only on $n$, $\lambda$, $\Lambda$, $\gamma$, $k$, $\alpha$, $\|a^{ij}\|_{C^{k,\alpha}_s(\overline{Q^+_1})}$, $\|b^{i}\|_{C^{k,\alpha}_s(\overline{Q^+_1})}$, and $\|c\|_{C^{k,\alpha}_s(\overline{Q^+_1})}$.
\end{lemma}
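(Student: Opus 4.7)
The plan is to run a generalized coefficient-freezing iteration at $O$, freezing the coefficients not to constants but to their $s$-polynomial approximations of degree $\mu$ corresponding to $k+\alpha$, so that the higher-regularity machinery for $s$-polynomial coefficient equations (\Cref{lem:pcf_hi_zf}, together with the correction in \Cref{calc_TBT}) can replace the role played in the constant-coefficient case (\Cref{lem:ccf_hi_nzf}) by the constant-coefficient higher regularity \Cref{lem:ccf_hi_zf}. I would begin with the standard normalization, dividing $u$ by $\|u\|_{L^\infty(Q_1^+)}+\varepsilon^{-1}\|f\|_{C^{k,\alpha}_s(\overline{Q_1^+})}$ for a small $\varepsilon>0$, and using the $C^{k,\alpha}_s$-regularity of the coefficients to extract $s$-polynomials $P^{ij},Q^i,R$ and $F$ at $O$ with
\[
|a^{ij}(X)-P^{ij}(X)|+|b^i(X)-Q^i(X)|+|c(X)-R(X)|\leq \varepsilon\, s[X,O]^{k+\alpha},\qquad |f(X)-F(X)|\leq \varepsilon\, s[X,O]^{k+\alpha}.
\]
Ellipticity of $(P^{ij})$ in $Q_1^+$ is preserved for $\varepsilon$ small. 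As in the proof of \Cref{bdry_c2a_vc}, the value $f(O)$ can be removed by subtracting the explicit $x_n^{2-\gamma}$- or $x_n\log x_n$-solution, so we may assume $F(O)=0$.

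Next, I apply \Cref{calc_TBT} to the $s$-polynomial $F$ to produce $h\in C^{k+2,\alpha}_s(\overline{Q_1^+})$ satisfying $|h_t-L_p h-F|\leq C\varepsilon\,x_n^{\frac{2-\gamma}{2}(k+\alpha)}$ and $\|h\|_{C^{k+2,\alpha}_s}\leq C\varepsilon$, and decompose
\[
u-h=v^{1}+w^{1}\quad\text{on }Q_1^+,\qquad (\partial_t-L_p)v^{1}=0,\quad v^{1}=u-h\text{ on }\partial_p Q_1^+,
\]
so that $w^{1}$ vanishes on $\partial_p Q_1^+$ and carries the forcing $\tilde f^{1}:=(f-F)-(h_t-L_p h-F)+(L-L_p)(u-h)$. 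The first two summands are controlled by $\varepsilon s[\cdot,O]^{k+\alpha}$ by construction; the third is controlled by the same quantity once we use the $C^{2+\alpha}_s$-regularity of $u$ (hence of $u-h$) from \Cref{thm:c2a_hc}, combined with the decay of the coefficient differences. A maximum-principle argument via \Cref{apriori_bound1} (or \Cref{apriori_bound2} for $\gamma<0$, where the decay of $\tilde f^1$ at $\{x_n=0\}$ is exactly what is needed) then yields $\|w^{1}\|_{L^\infty(Q_r^+)}\leq C\varepsilon$ for a fixed $r\in(0,1/2)$. Choosing $\varepsilon$ so small that $C\varepsilon<r^{k+2+\alpha}$ and applying \Cref{lem:pcf_hi_zf} to $v^{1}$ with $M,N$ large enough that $N+\tfrac{2M}{2-\gamma}>k+2+\alpha$, I extract an $s$-polynomial $p^{1}$ of degree $m$ corresponding to $\kappa=k+2+\alpha$ at $O$ with $|v^{1}-p^{1}|\leq Cs[X,O]^{k+2+\alpha}$ on $Q_1^+$, hence $|u-h-p^{1}|\leq r^{k+2+\alpha}$ on $\overline{Q_r^+}$.

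I then iterate: set $u^{l+1}(X):=r^{-(k+2+\alpha)}(u^{l}-h^{l}-p^{l})(rX)$ and rescale $L$ and $f$ accordingly, observing that the rescaled coefficients remain $s$-polynomial perturbations of the same $L_p$ up to factors $r^{l-1}$ in the lower-order terms, with the same structural hypotheses. Exactly as in the proof of \Cref{lem:ccf_hi_nzf}, reassembling
\[
P^{\infty}(X)=\sum_{l=1}^\infty r^{(l-1)(k+2+\alpha)}p^{l}(r^{-(l-1)}X)
\]
requires estimating $p^{l}-p^{l-1}$ on $Q_{r^l}^+$ and reading off the coefficients; the condition $k+2+\alpha\notin\mathcal D$ guarantees that every exponent $|\beta|+i+\tfrac{2j}{2-\gamma}+2l$ that occurs in an $s$-polynomial of degree $m$ corresponding to $\kappa=k+2+\alpha$ is strictly less than $k+2+\alpha$, so that the Cauchy sums giving the convergence of each coefficient have nonzero denominators and sum geometrically. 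The limit $P^{\infty}$ is an $s$-polynomial of degree $m$ corresponding to $\kappa=k+2+\alpha$ at $O$ satisfying $|u-h-P^{\infty}|\leq C s[X,O]^{k+2+\alpha}$ on $\overline{Q_1^+}$. Since $h\in C^{k+2,\alpha}_s$, its own $(k+2+\alpha)$-th $s$-Taylor expansion at $O$ is an $s$-polynomial $P_h$ of the right form with $|h-P_h|\leq Cs[X,O]^{k+2+\alpha}$, and $p:=P_h+P^{\infty}$ is the desired $s$-polynomial.

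The main obstacle will be the second step: proving that $\|w^{1}\|_{L^\infty(Q_r^+)}$ (and its rescaled analogs $\|w^{l}\|_{L^\infty(Q_r^+)}$) is bounded uniformly by $C\varepsilon$. The delicate point is the coefficient-error contribution $(L-L_p)(u-h)$ to $\tilde f^{l}$: controlling it requires propagating $C^{2+\alpha}_s$-bounds for the rescaled solutions $u^{l}-h^{l}$, which is why the preliminary $C^{2+\alpha}_s$-regularity from \Cref{thm:c2a_hc} is essential before the bootstrap. A subtle accompanying issue is the case $\gamma\leq 0$: here \Cref{apriori_bound1} is not sufficient and one must use \Cref{apriori_bound2}, which demands that $\tilde f^{l}$ blow up no worse than $x_n^{-\gamma}$ near $\{x_n=0\}$; this is secured by the way the $s$-polynomial $F$ and the correction $h$ are chosen so that the leading boundary behavior of $f$ is already absorbed before the iteration begins.
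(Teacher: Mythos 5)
Your overall plan matches the paper's proof closely: normalize, approximate the coefficients and the forcing by $s$-polynomials $P^{ij},Q^i,R,F$ at $O$, correct by $h$ from \Cref{calc_TBT}, invoke \Cref{thm:c2a_hc} to bound the coefficient-error term $(L-L_p)u$, decompose $u-h$ into the homogeneous part $v^1$ plus a small remainder $w^1$ controlled by \Cref{apriori_bound1}, approximate each homogeneous piece by the $s$-polynomials from \Cref{lem:pcf_hi_zf}, and reassemble using the hypothesis $k+2+\alpha\notin\mathcal D$. All the correct lemmas and the correct role of $C^{2+\alpha}_s$-regularity are identified.

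However, the precise iteration you write down is not the one the paper uses and, as stated, has a gap. The paper iterates by subtracting $v^l$ (equivalently carrying $w^l$ forward), i.e.\ $u^{l}:=r^{-(k+2+\alpha)}(u^{l-1}-v^{l-1})(r\cdot)$, so that $\|u^l\|_{L^\infty}\leq 1$ holds exactly at each step; the polynomial approximations $p^l\approx v^l$ from \Cref{lem:pcf_hi_zf} are substituted only in the final reassembly, with the accumulated error summed geometrically. You instead set $u^{l+1}:=r^{-(k+2+\alpha)}(u^l-h^l-p^l)(r\cdot)$ and assert ``hence $|u-h-p^1|\leq r^{k+2+\alpha}$ on $\overline{Q_r^+}$.'' This inequality does not follow from $C\varepsilon<r^{k+2+\alpha}$ alone: you have $|u-h-v^1|\leq r^{k+2+\alpha}$ and $|v^1-p^1|\leq Cs[X,O]^{N+\frac{2M}{2-\gamma}}$, so on $Q_r^+$ you only get $|u-h-p^1|\leq r^{k+2+\alpha}+Cr^{N+\frac{2M}{2-\gamma}}$, which requires also taking $r$ small (not just $\varepsilon$); and with $p^l$ subtracted the rescaled function acquires an extra forcing from $p^l_t-L_p^lp^l$ that must be tracked through the iteration. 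Your later text (``exactly as in the proof of \Cref{lem:ccf_hi_nzf}'') actually describes the paper's subtract-$v^l$ scheme, so the writeup is internally inconsistent; the fix is to use that scheme and apply \Cref{lem:pcf_hi_zf} only at the reassembly step. Two smaller remarks: the forcing for $w^1$ is $\tilde f^1=f+L_ph-h_t+(L-L_p)u$, not $(L-L_p)(u-h)$ — the difference $(L_p-L)h$ is a harmless lower-order error, but worth recording correctly — and the $F(O)=0$ reduction and the appeal to \Cref{apriori_bound2} for $\gamma<0$ are unnecessary here, since $\tilde f^1$ is already bounded by $C\varepsilon s[X,O]^{k+\alpha}$ and \Cref{apriori_bound1} suffices.
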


\begin{proof}
By considering $u/(\|u\|_{L^\infty(Q^+_1)} + \varepsilon^{-1} \|f\|_{C^{\alpha}_s(\overline{Q^+_1})} )$ for $\varepsilon >0$, we may assume without loss of generality that  $\|u\|_{L^\infty(Q^+_1)}\leq1 $ and $\|f\|_{C^{k,\alpha}_s(\overline{Q^+_1})} \leq \varepsilon$. By scaling and the $C^{k,\alpha}_s$-H\"older continuity for $a^{ij}$, $b^i$, $c$, and $f$, we also assume that 
\begin{align}
	|a^{ij}(X) - P^{ij}(X)| & \leq \varepsilon s[X,O]^{k+\alpha}, \label{approx_aij} \\	
	|b^{i}(X) - Q^{i}(X)| & \leq \varepsilon s[X,O]^{k+\alpha}, \label{approx_bi}  \\	
	|c(X) - R(X)| & \leq \varepsilon s[X,O]^{k+\alpha}, \label{approx_c}  \\
	|f(X) - F(X)| &\leq \varepsilon s[X,O]^{k+\alpha}  \label{approx_f} 
\end{align}
for all $X \in \overline{Q_1^+}$, where $P^{ij} $, $Q^i$, $R$, and $F$ are $s$-polynomials with degree $\tilde{m}$ corresponding to $(k+\alpha)$ at $O$. 

Let $L_p $ be the operator given by
\begin{align*}
	L_p &= P^{i'j'}(X) D_{i'j'} + 2 x_n^{\gamma/2} P^{i'n}(X) D_{i'n} + x_n^{\gamma} P^{nn} (X) D_{nn} \\
	& \qquad\qquad\qquad\qquad\qquad\qquad\qquad  + Q^{i'}(X) D_{i'} + x_n^{\gamma/2} Q^n(X) D_{n} + R (X)  
\end{align*}
From \eqref{parabolicity} and \eqref{approx_aij}, we know that 
\begin{equation*}
	(\lambda - \varepsilon n^2) |\xi|^2 \leq P^{ij} (X) \xi_i \xi_j \leq  (\Lambda  + \varepsilon n^2)|\xi|^2 
\end{equation*}
for all $X \in \overline{Q_1^+}$, $\xi \in \mathbb R^n$ and hence for sufficiently small $\varepsilon >0$, we have 
\begin{equation*}
\frac{\lambda}{2} |\xi|^2 \leq P^{ij} (X) \xi_i \xi_j \leq 2 \Lambda |\xi|^2\quad\mbox{for any } X \in \overline{Q_1^+}, ~ \xi \in \mathbb R^n .
\end{equation*}
By \Cref{calc_TBT},  there exists a function $h  \in C_s^{k+2,\alpha}(\overline{Q_1^+})$ of the form \eqref{expan_h} such that
\begin{equation}  \label{inq:op_h} 
	|(F + L_p h - h_t) (X) | \leq C \varepsilon s[X,O]^{k+\alpha}\quad \text{for all } X \in \overline{Q_1^+}
\end{equation}
and $\|h\|_{C^{k+2,\alpha}_s(\overline{Q_1^+})} \leq C \varepsilon,$ where $C$ depends on $n$, $\gamma$, $k$, $\alpha$, $\|a^{ij}\|_{C^{k,\alpha}_s(\overline{Q_1^+})}$, $\|b^i\|_{C^{k,\alpha}_s(\overline{Q_1^+})}$, and $\|c\|_{C^{k,\alpha}_s(\overline{Q_1^+})}$. 

By \Cref{thm:c2a_hc}, we know that $u \in C_s^{2+\alpha} ( \overline{Q_{1/2}^+}).$
Now, we decompose $(u-h)$ into the sum of $v^1$ and $w^1$ such that
\begin{equation*}
\left\{\begin{aligned}
	v^1_t &= L_p v^1 && \mbox{in } Q_{1/2}^+ \\
	v^1 &= u-h && \mbox{on }\partial_p Q_{1/2}^+
\end{aligned} \right.
\qquad \textnormal{and} \qquad 
\left\{\begin{aligned}
	w_t^1 &= L _p w^1 + \tilde{f}^1  && \text{in } Q_{1/2}^+ \\
	w^1 & = 0 && \text{on }\partial_p Q_{1/2}^+,
\end{aligned}\right.
\end{equation*}
where $\tilde{f}^1 = f + L_p h - h_t +  Lu-L_pu $. Then, combining \eqref{approx_aij}-\eqref{inq:op_h} leads us to the estimate
\begin{align}
	|\tilde{f}^1(X)| & \leq  |f(X) -  F(X)| + |(F + L_p h - h_t)(X)| + |(Lu-L_p u) (X)| \nonumber \\
	& \leq \varepsilon s[X,O]^{k+\alpha}  + C \varepsilon s[X,O]^{k+\alpha}  + C \varepsilon \|u\|_{C^{2,\alpha}_s (\overline{Q_{1/2}^+})}  s[X,O]^{k+\alpha} \nonumber \\
	& \leq C \varepsilon s[X,O]^{k+\alpha}  \label{inq:tf<s}
\end{align}
for all $X \in \overline{Q_{1/2}^+}$. By \Cref{apriori_bound1}, for fixed $r \in (0,1/2)$,  \eqref{inq:tf<s} gives
\begin{equation} \label{w_1st_hcf}
	|w^1(X)| \leq C \| \tilde{f}^1 \|_{L^{\infty}( Q_{1/2}^+ )} \leq C\varepsilon  \quad \text{for all } X \in \overline{Q_r^+} 
\end{equation}
and hence, we have $|u(X) - v^1(X) - h(X)| = |w^1(X)|  \leq C \varepsilon$ for all $X \in \overline{Q_r^+}$. 
Choose now $\varepsilon$ small enough such that  $ C \varepsilon < r^{k+2+\alpha}$. Then, $ |u(X) - v^1(X) - h(X)| \leq  r^{k+2 + \alpha}$ for all $X \in \overline{Q_r^+}$. Let us now define
$$  u^2(X) \coloneqq \frac{(u - v^1- h ) (rX) }{r^{k+2+\alpha}} \qquad \text{and} \qquad f^2(X) \coloneqq \frac{ \tilde{f}^1 (rX) }{r^{k+\alpha}}.$$
Then, $\|u^2\|_{L^{\infty}(Q_1^+ )} \leq 1$ and $u_t^2  =  L_p^2 u^2 + \tilde{f}^2 $ in $ Q_1^+$, where
\begin{align*}
	L_p^2  &= P^{i'j'}_{(2)}(X) D_{i'j'} + 2 x_n^{\gamma/2} P^{i'n}_{(2)}(X) D_{i'n} + x_n^{\gamma} P^{nn}_{(2)} (X) D_{nn} \\
	& \qquad\qquad\qquad\qquad\qquad\qquad\qquad  + Q^{i'}_{(2)}(X) D_{i'} + x_n^{\gamma/2} Q^n_{(2)}(X) D_{n} + R_{(2)} (X)  
\end{align*}
with
\begin{equation*}
P_{(2)}^{ij}(X) = P^{ij} (rX) , \quad 
Q_{(2)}^{i}(X) = r Q^{i} (rX) , \quad \text{and} \quad 
R_{(2)}(X) = r^2 R (rX ). 
\end{equation*}
Furthermore, \eqref{inq:tf<s} leads us to the estimate $|\tilde{f}^{2}(X)|  = r^{-(k+\alpha)}  \left|\tilde{f}^1(rX) \right|  \leq C\varepsilon s[X,O]^{k+\alpha}$ for all $X \in \overline{Q_{1/2}^+}$. That is, the same hypotheses as before are fulfilled. Repeating the same procedure, we decompose $u^2$ into the sum of $v^2$ and $w^2$ such that
\begin{equation*}
\left\{\begin{aligned}
	v^2_t &= L_p^2 v^2 && \mbox{in } Q_{1/2}^+ \\
	v^2 &= u^2 && \mbox{on }\partial_p Q_{1/2}^+,
\end{aligned} \right.
\qquad \qquad 
\left\{\begin{aligned}
	w_t^2 &= L _p^2 w^2 + \tilde{f}^2  && \text{in } Q_{1/2}^+ \\
	w^2 & = 0 && \text{on }\partial_p Q_{1/2}^+,
\end{aligned}\right.
\end{equation*}
and  $|u^2(X) - v^2(X)  | \leq r^{k+2 + \alpha}$ for all $X  \in \overline{Q_r^+}$. By substituting back, we have
\begin{equation*}
	|u(X) - h(X)  - v^1(X) - r^{k+2+\alpha} v^2 (r^{-1}X)  | \leq r^{2(k+2 + \alpha)} \quad \text{for all } X  \in \overline{Q_{r^2}^+}. 
\end{equation*}
Continuing iteratively, for each integer $l \ge 3$, let us define the sequence of functions $\{u^l\}$ inductively as follows:
$$ u^l(X) \coloneqq \frac{(u^{l-1} - v^{l-1}) (rX ) }{r^{k+2+\alpha}}  \qquad \text{and} \qquad f^l(X) \coloneqq \frac{ \tilde{f}^{l-1} (rX) }{r^{k+\alpha}}.$$
Then, $\|u^l\|_{L^{\infty}(Q_1^+ )} \leq 1$ and $u_t^l = L_p^l u^l + \tilde{f}^l $ in $ Q_1^+$, where
\begin{align*}
	L_p^l  &= P^{i'j'}_{(l)}(X) D_{i'j'} + 2 x_n^{\gamma/2} P^{i'n}_{(l)}(X) D_{i'n} + x_n^{\gamma} P^{nn}_{(l)} (X) D_{nn} \\
	& \qquad\qquad\qquad\qquad\qquad\qquad + Q^{i'}_{(l)}(X) D_{i'} + x_n^{\gamma/2} Q^n_{(l)} (X) D_{n} + R_{(l)} (X)  
\end{align*}
with
\begin{equation*}
P_{(l)}^{ij}(X) = P_{(l-1)}^{ij} (rX)  , \quad 
Q_{(l)}^{i}(X)= r Q_{(l-1)}^{i} (rX) , \quad \text{and} \quad
R_{(l)}(X) = r^2 R_{(l-1)} (rX).
\end{equation*}
Furthermore, $|\tilde{f}^l(X)| \leq C\varepsilon s[X,O]^{k+\alpha} $ for all $X \in \overline{Q_{1/2}^+}$ and hence we decompose $u^l$ into the sum of $v^l$ and $w^l$ such that
\begin{equation*}
\left\{\begin{aligned}
	v^{l}_t &= L_p^l v^{l} && \mbox{in } Q_{1/2}^+ \\
	v^{l} &= u^{l} && \mbox{on }\partial_p Q_{1/2}^+,
\end{aligned} \right.
\qquad \qquad 
\left\{\begin{aligned}
	w_t^{l} &= L _p^l w^{l} + \tilde{f}^{l}  && \text{in } Q_{1/2}^+ \\
	w^{l} & = 0 && \text{on }\partial_p Q_{1/2}^+,
\end{aligned}\right.
\end{equation*}
and  $|u^{l}(X)-v^{l}(X)|  \leq  r^{k+2 + \alpha} $ for all $X \in \overline{Q^+_r}$. By substituting back, we have
\begin{equation} \label{sbt_back_est_pc}
	\left|u(X) - h(X)  -  \sum_{i=1}^{l} r^{(i-1)(k+2+\alpha)}  v^i (r^{-i+1}X) \right| \leq r^{l(k+2 + \alpha)} \quad \text{for all } X  \in \overline{Q_{r^{l}}^+}.
\end{equation}

We choose $M, N \in \mathbb{N}$ such that
\begin{equation*}
k+2+\alpha < N + \frac{2M}{2-\gamma} \eqqcolon \min \Big\{i + \frac{2j}{2-\gamma} \in(k+2+\alpha,\infty): i,j \in \mathbb{N} \Big\}.
\end{equation*}
By \Cref{lem:pcf_hi_zf} and \eqref{appox_spol_pc}, for each $l \in \mathbb{N}$, there exists an $s$-polynomial $p^l(X)$ of the form
\begin{equation*}
	p^l(X) = \sum_{|\beta| + \frac{2i_1}{2-\gamma} + i_2 + 2j  <N+\frac{2M}{2-\gamma}} A_l^{\beta i j k} {x'}^{\beta} x_n^{i_1+\frac{2 - \gamma}{2} i_2} t^j 
\end{equation*}
such that
\begin{equation} \label{est_vl_pl_pc}
	|v^l(X)-p^l(X)| \leq Cs[X,O]^{N+\frac{2M}{2-\gamma}} \quad \text{for all } X \in \overline{Q^+_1},  
\end{equation}
where $C$ is a positive constant depending only on $n$, $\lambda$, $\Lambda$, $\gamma$, $M$, $N$, $\|a^{ij}\|_{C^{k,\alpha}_s(\overline{Q^+_1})}$, $\|b^i\|_{C^{k,\alpha}_s(\overline{Q^+_1})}$, and $\|c\|_{C^{k,\alpha}_s(\overline{Q^+_1})}$. 
As in the proof of \Cref{lem:ccf_hi_nzf}, we can see
\begin{equation*}
	\left|u(X) - h(X)  -  \sum_{i=1}^l r^{(i-1)(k+2+\alpha)}  p^i (r^{-i+1}X) \right| \leq C r^{l(k+2+\alpha)}
\end{equation*}
for all $X  \in \overline{Q_{r^l}^+}$. Now put 
\begin{equation*}
	P^l(X)= \sum_{i=1}^l r^{(i-1)(k+2+\alpha)} p^i (r^{-i+1}X)  .
\end{equation*}
Then, \eqref{sbt_back_est_pc} gives
\begin{align}
	|P^l(X)-P^{l-1}(X)| 
	&\leq  C r^{l(k+2 + \alpha)}  \label{est_p-p_pc}
\end{align}
for all $X  \in \overline{Q_{r^l}^+}$. Since $ k+2+\alpha \notin \mathcal{D}$, each $p^l(X)$ has degree $m$ corresponding to $\kappa=k+2+\alpha$, so we can consider the rescaling $s$-polynomial
\begin{align*}
	\tilde{P} (X) &\coloneqq P^l(r^l X) - P^{l-1}(r^lX)  \\
	&=r^{(l-1)(k+2+\alpha)}  \sum_{|\beta|+\frac{2 i_1}{2-\gamma}+i_2+2j  < k+2+\alpha} r^{|\beta|+\frac{2 i_1}{2-\gamma}+i_2+2j } A_l^{\beta i_1 i_2 j} x'^{\beta} x_n^{i_1 + \frac{2-\gamma}{2} i_2} t^{j} .
\end{align*}
Finally, as in the proof of \Cref{lem:pcf_hi_zf}, if $h$ is approximated with an $s$-polynomial, we can show that there exists an $s$-polynomial $p$ of degree $m$ corresponding to $\kappa =k+2+\alpha$ at $O$ such that
$$|u(X)-p(X)| \leq Cs[X,O]^{k+2+\alpha} \quad \mbox{for all } X \in \overline{Q_{1}^+}.$$
\end{proof}

\section*{Acknowledgments}
Ki-Ahm Lee is supported by the National Research Foundation of Korea (NRF) grant: NRF- 2021R1A4A1027378. Ki-Ahm Lee also holds a joint appointment with the Research Institute of Mathematics of Seoul National University.

\bibliography{begin-ref}
\bibliographystyle{alpha}


\end{document}